\definecolor{Maroon}{RGB}{140,10,0}
\newtheorem{theorem}{Theorem}[section]
\newtheorem{lemma}[theorem]{Lemma}
\newtheorem{corollary}[theorem]{Corollary}
\newtheorem{proposition}[theorem]{Proposition}
\theoremstyle{definition}
\newtheorem{definition}[theorem]{Definition}
\newenvironment{example}{\pushQED{\qed}\XXmpX}{\popQED\endXXmpX}
\newtheorem{remark}[theorem]{Remark}
\newtheorem{question}[theorem]{Question}
\newcommand{\N}{\mathbb N}
\newcommand{\R}{\mathbb R}
\newcommand{\w}{\omega}
\newcommand{\s}{\sigma}
\newcommand{\SP}{\textit{SP}}
\newcommand{\LmSP}{\textit{LmSP}}
\newcommand{\sLmSP}{\textit{s-LmSP}}
\numberwithin{equation}{section}
\newcommand{\set}[1]{\left\langle#1\right\rangle}
\newcommand{\setb}[1]{\big\langle#1\big\rangle}
\newcommand{\eps}{\varepsilon}
\DeclareMathOperator{\dist}{dist}
\DeclareMathOperator{\diam}{diam}
\DeclareMathOperator{\Crit}{Crit}
\newcommand{\inv}[2]{\varprojlim \left(#1,#2\right)}
\begin{document}

%%%%% To ease editing, for IMPAN journals add:

\baselineskip=17pt

%%%%%%%%%%%

%% In the running head, replace first names by initials 
%% and give an abbreviation of the title.
\title{Shadowing, asymptotic shadowing and s-limit shadowing}

\author[C.~Good]{Chris Good}
\address[C.~Good]{School of Mathematics\\
	University of Birmingham\\
	Birmingham, B15 2TT, UK}
\email[C.~Good]{c.good@bham.ac.uk}

\author[P.~Oprocha]{Piotr Oprocha}
\address[P.~Oprocha]{AGH University of Science and Technology, Faculty of Applied
	Mathematics, al.\
	Mickiewicza 30, 30-059 Krak\'ow, Poland
	-- and --
	National Supercomputing Centre IT4Innovations, Division of the University of Ostrava,
	Institute for Research and Applications of Fuzzy Modeling,
	30.\ dubna 22, 70103 Ostrava,
	Czech Republic}
\email[P.~Oprocha]{oprocha@agh.edu.pl}

\author[M.~Puljiz]{Mate Puljiz}
\address[M.~Puljiz]{
	University of Zagreb\\
	Faculty of Electrical Engineering and Computing\\
	Unska 3, 10000 Zagreb, Croatia
}
\email[M.~Puljiz]{puljizmate@gmail.com, mpuljiz@fer.hr}

\date{}

	\begin{abstract}
	We study three notions of shadowing: classical shadowing, limit (or asymptotic) shadowing, and s-limit shadowing. We show that classical and s-limit shadowing coincide for tent maps and, more generally, for piecewise linear interval maps with constant slopes, and are further equivalent to the linking property introduced by Chen in \cite{Chen}.
	
	We also construct a system which exhibits shadowing but not limit shadowing, and we study how shadowing properties transfer to maximal transitive subsystems and inverse limits (sometimes called natural extensions).
	
	Where practicable, we show that our results are best possible by means of examples.
	\end{abstract}

\subjclass[2010]{Primary 37E05; Secondary 37C50, 37B10, 54H20, 26A18}
\keywords{Interval maps; shadowing property; linking property; internally chain transitive sets; $\w$-limit sets; symbolic dynamics}

	\maketitle

	It seems that the first time \emph{shadowing} appeared in the literature was in a paper \cite{Sin72} by Sina{\u\i} where it is shown that Anosov diffeomorphisms have shadowing and furthermore that any pseudo-orbit has a unique point shadowing it. This type of shadowing lemma subsequently appears in all sorts of hyperbolic systems, especially in their relation to Markov partitions and symbolic dynamics. We refer the reader to monographs by Palmer \cite{Pal00} and Pilyugin \cite{Pil,PilSak}.
	
	\medskip
	
	Over time different variations of shadowing appeared in the literature driven by different problems people tried to solve using it. Our interest in shadowing comes from its relation to a type of attractors called $\w$-limit sets. Recall that the \emph{$\w$-limit set of a point $x\in X$} is the set of limit points of its forward orbit. It can be shown that each $\w$-limit set is also \emph{internally chain transitive} (ICT), meaning that under the given dynamics and allowing small perturbations one can form a pseudo-orbit between any two points of that set (precise definitions are given in Section \ref{sec:prelim}).
	
	As the ICT condition is more operational, we are interested in systems for which these two notions coincide. It is easy to see that a form of shadowing introduced by Pilyugin et al.\ \cite{ENP97}, called \emph{limit (or asymptotic) shadowing}, suffices.
	
	Over the years it was shown that $\w_f = ICT(f)$ holds for many other systems. Bowen \cite{Bow} proved it for Axiom A diffeomorphisms; and in a series of papers \cite{AndyCon,AndyJuli,AndyPWM,AndyShadow,AndyShift,BarGooOprRai} Barwell, Davies, Good, Knight, Oprocha, and Raines proved it for shifts of finite type and Julia sets for certain quadratic maps, amongst others. It soon became apparent that most of these systems satisfy both the classical and the asymptotic notion of shadowing. This led Barwell, Davies, and Good \cite[{Conjectures 1.2 and 1.3}]{AndyCon} to conjecture that classical shadowing alone will imply $\w_f = ICT(f)$. Recently, Meddaugh and Raines \cite{MeddaughRaines} confirmed this for interval maps\footnote{It is worth noting here that neither classical nor limit shadowing implies the other one --- more on this in Section \ref{sec:shad}.}, and Good and Meddaugh \cite{GooMed} have shown that, for general compact metric $X$, $\w_f = ICT(f)$ if and only if $f$ satisfies Pilyugin's notion of \emph{orbital limit shadowing} \cite{Pil-limit}. In Section \ref{sec:shad} we disprove the other conjecture by constructing a system with shadowing but for which $\w_f \subsetneq ICT(f)$ and thus without limit shadowing. This is the content of Theorem \ref{thm:cntexample}.
	
	{
		\renewcommand*{\thetheorem}{\ref{thm:cntexample}}
		\addtocounter{theorem}{-1}
		\begin{theorem}
			There exists a dynamical system $(X,f)$ on a compact metric space which exhibits shadowing but for which $\w_f \neq ICT(f)$.
		\end{theorem}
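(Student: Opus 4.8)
The plan is to reduce the theorem to a clean hyperspace statement and then hand‑build an example. The key preliminary, easy to verify, is that \emph{for any system with shadowing one has $ICT(f)=\overline{\w_f}$}, the closure of $\w_f$ in the space of nonempty closed subsets of $X$ with the Hausdorff metric. One inclusion is automatic: every $\w$-limit set is internally chain transitive, and $ICT(f)$ is Hausdorff‑closed (a Hausdorff limit of closed invariant chain transitive sets is again closed, invariant and chain transitive, by a routine argument using uniform continuity of $f$), so $\overline{\w_f}\subseteq ICT(f)$. Conversely, given $D\in ICT(f)$ and $\eps>0$, pick a finite $\eps$-net $\{p_1,\dots,p_m\}$ of $D$ and, using internal chain transitivity, concatenate short chains inside $D$ from $p_i$ to $p_{i+1}$ (indices mod $m$) into a periodic pseudo-orbit lying in $D$ that meets each $p_i$ infinitely often; shadowing it produces a point $z$ with $\w(z)$ within $\eps$ of $D$ and $2\eps$-dense in $D$, so $\w(z)$ is within $2\eps$ of $D$ in the Hausdorff metric, and letting $\eps\to 0$ gives $D\in\overline{\w_f}$. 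Thus, for a system with shadowing, $\w_f=ICT(f)$ if and only if $\w_f$ is Hausdorff‑closed, and it suffices to build a compact metric system with shadowing whose family $\w_f$ is \emph{not} Hausdorff‑closed.

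To do this I would take $X$ to be a convergent sequence of gadgets: periodic orbits $P_1,P_2,\dots\subseteq X$ of increasing period converging, in the Hausdorff metric, to an infinite closed invariant set $D$ — for instance one could try to make $D$ an orbit doubly asymptotic to a fixed point together with that fixed point (a homoclinic‑type loop), with the $P_n$ the periodic orbits clustering on it. Each $P_n$ is an $\w$-limit set, and $P_n\to D$; since $D$ is a Hausdorff limit of chain transitive sets it lies in $ICT(f)$, so the theorem follows once $D\notin\w_f$. The construction must arrange three things at once: (i) $D$ should be internally chain transitive; (ii) $f|_D$ should have no point with dense forward orbit (so no orbit inside $D$ has $\w$-limit set $D$), and $D$ should be dynamically ``thin'' — the only orbits whose entire forward trajectory stays in a fixed small neighbourhood of $D$ should be ones eventually trapped inside a single $P_n$, whose $\w$-limit set is then a subset of $P_n\subsetneq D$ — so that \emph{no} orbit has $\w$-limit set equal to $D$, i.e.\ $D\notin\w_f$; and (iii) $(X,f)$ should have shadowing, which one would try to secure by making $X$ ``locally of finite type'': away from $D$ the $P_n$ are isolated periodic orbits onto which low‑jump pseudo-orbits are forced, while near $D$ the $P_n$ of large index are packed densely enough that any pseudo-orbit lingering near $D$ is tracked for arbitrarily long finite stretches by suitable $P_n$-orbits, which are then patched together.

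The crux, and the step I expect to be hardest, is reconciling (iii) with (ii): the $P_n$ must be abundant and varied enough to shadow \emph{every} pseudo-orbit that dwells near $D$ — including ``irregular'' ones that imitate no single $P_n$ — yet sparse enough that no orbit manages to sweep out all of $D$ as its $\w$-limit set (which would put $D$ back into $\w_f$). Choosing the metric near $D$ so that the supply of near‑$D$ pseudo-orbits is exactly shadowable by the $P_n$ while $f|_D$ stays non-transitive is the whole difficulty, and it forces $X$ to be a genuinely new space: $X$ cannot be a subshift over a finite alphabet, since for subshifts shadowing is equivalent to finite type and finite type gives $\w_f=ICT(f)$; nor an interval map, by the Meddaugh--Raines theorem. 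Once a workable $X$ is in hand, computing $\w_f$ and $ICT(f)$ should be straightforward, and the verification of shadowing will be the technical heart of the argument.
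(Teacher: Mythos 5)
Your high-level strategy is the right one and matches the paper's: produce a closed invariant set $D$ that is a Hausdorff limit of $\w$-limit sets (hence lies in $ICT(f)$, since $ICT(f)$ is closed in $2^X$ and, under shadowing, equals $\overline{\w_f}$ by Meddaugh--Raines) but is not itself an $\w$-limit set. Your sketch of the inclusion $ICT(f)\subseteq\overline{\w_f}$ under shadowing is fine, and your observation that the example cannot be a subshift over a finite alphabet or an interval map is correct. Even the shape of your candidate $D$ --- a homoclinic-type loop, i.e.\ an orbit asymptotic to a fixed point together with that fixed point --- is exactly the paper's $X_\infty=\{0^\infty\}\cup\{0^n10^\infty: n\ge 0\}$ under the shift.

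However, the theorem is an existence statement, and the construction is the entire content; you have not supplied one, and the specific instantiation you propose would fail. If the approximating invariant sets $P_n$ are single periodic orbits, pairwise disjoint and isolated from one another, then a $\delta$-pseudo-orbit that circulates around the loop $D$ with successive return times $n_1,n_2,n_3,\dots$ that are all large but not all equal cannot be $\eps$-shadowed for small $\eps$: a genuine orbit staying $\eps$-close must remain in a single $P_n$ (whose return times are all equal) or in $D$ (which goes around at most once), and orbits cannot switch between the disjoint $P_n$'s. So (iii) genuinely conflicts with your setup, not merely with its fine-tuning. The missing idea --- which is what the paper's proof actually consists of --- is to replace each periodic orbit by an entire shift of finite type $X_k=\{\xi\in\Sigma_2:\text{any two }1\text{s are separated by at least }k+1\text{ zeros}\}$, placed at level $1/2^k$ of the product $N\times\Sigma_2$ with $N=\{1/2^k\}\cup\{0\}$ and $f=\mathrm{id}\times\sigma$. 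Each level has shadowing by Walters's theorem; a small-jump pseudo-orbit either stays on one level or has first coordinate $\le 1/2^k$ throughout and hence symbolic part in $X_k$ (the $X_k$ are nested), so it is tracked by an orbit at level $1/2^k$ --- this is how arbitrary large return times are accommodated. Finally, the first coordinate is $f$-invariant, which is precisely your ``thinness'': any point at level $1/2^k$ has its $\w$-limit set at level $1/2^k$, and any point at level $0$ has $\w$-limit set $\{(0,0^\infty)\}$, so $\{0\}\times X_\infty\notin\w_f$ even though it is the Hausdorff limit of the $\{1/2^k\}\times X_k$ and is directly checked to be ICT. Without this (or an equivalent) mechanism, your proposal remains a plan rather than a proof.
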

	}
	
	Interestingly, Meddaugh and Raines's result did not answer whether for interval maps classical shadowing implies limit shadowing. Trying to resolve this question forms the second line of inquiry in this paper. Sadly, we have not yet been able to provide a definite answer for all interval maps, but a great deal can be said for systems given by piecewise linear maps with constant slopes. Indeed, Chen \cite{Chen} showed that the \emph{linking condition} is strong enough to completely resolve whether such a system has shadowing.
	
	Using the same condition we have been able to show that for these systems shadowing and linking are equivalent to a condition tightly related to limit shadowing, called \emph{s-limit shadowing}. This notion, stronger than either classical or limit shadowing, was introduced in \cite{Sakai} where Sakai extended the definition of limit shadowing to account for the fact that many systems have limit shadowing but not shadowing \cite{KO2,Pil}. Below, we quote the two main results we obtain.
	
	{
		\renewcommand*{\thetheorem}{\ref{thm:shadequiv}}
		\addtocounter{theorem}{-1}
		\begin{theorem}
			Let $f\colon I \to I$ be a continuous piecewise linear map with a constant slope $s>1$. Then the following are equivalent:
			\begin{enumerate}
				\item $f$ has s-limit shadowing,
				\item $f$ has shadowing,
				\item $f$ has the linking property.
			\end{enumerate}
			If furthermore the map is transitive, all of the above are additionally equivalent to:
			\begin{enumerate}
				\setcounter{enumi}{3}
				\item $f$ has limit shadowing.
			\end{enumerate}
		\end{theorem}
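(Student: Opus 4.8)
The plan is to run the cycle of implications $(1)\Rightarrow(2)\Leftrightarrow(3)\Rightarrow(1)$, and then, assuming transitivity, to insert $(4)$ into the loop via $(1)\Rightarrow(4)\Rightarrow(3)$. Two of the links are essentially for free. The implication $(1)\Rightarrow(2)$ is a general fact valid on any compact metric space: s-limit shadowing produces, for a prescribed $\eps$ and a sufficiently fine asymptotic $\delta$-pseudo-orbit, a single point whose orbit both $\eps$-shadows it and shadows it asymptotically, so it in particular yields classical shadowing for genuine $\delta$-pseudo-orbits; that it also yields limit shadowing, so that $(1)\Rightarrow(4)$, is a standard (if slightly more fiddly) observation recorded in Section~\ref{sec:prelim}. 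The equivalence $(2)\Leftrightarrow(3)$ is precisely Chen's theorem \cite{Chen}, which characterises shadowing for piecewise linear maps with constant slope by the linking property, so nothing new is required there.

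The real content is $(3)\Rightarrow(1)$ --- equivalently $(2)\Rightarrow(1)$. The idea is to exploit that, with a constant slope $s>1$, $f$ has only finitely many laps, each a monotone branch of slope $\pm s$, and that the itinerary of a point with respect to this finite partition almost determines the point: two points agreeing in itinerary over a long block of times must lie in a tiny interval. Given an asymptotic $\delta$-pseudo-orbit $(x_n)_{n\ge0}$, one fixes $\eps_k\downarrow0$ with shadowing constants $\delta_k$ supplied by $(2)$, chooses times $N_k\uparrow\infty$ beyond which every one-step error of the pseudo-orbit is smaller than $\delta_k$, applies classical shadowing to each tail $(x_n)_{n\ge N_k}$ to obtain a point tracking that tail to within $\eps_k$, and then glues these successive exact-orbit segments into a single point. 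The role of the linking property is to guarantee that the itinerary forced on such a shadowing point by the portion of the pseudo-orbit lying near turning points is always \emph{realisable} and can be matched across the interface between consecutive stages; a limit point of the resulting sequence of glued points then $\eps$-shadows $(x_n)$, and it shadows it asymptotically because the stagewise tracking errors tend to $0$.

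The chief obstacle is that these maps are, in general, \emph{not} expansive --- the full tent map already fails positive expansivity, since the two preimages of a point near the maximum are close and have identical forward orbits --- so the familiar route ``shadowing $+$ expansivity $\Rightarrow$ s-limit shadowing'' is unavailable, and the non-uniqueness of shadowing points must be confronted directly. One has to show that this non-uniqueness is benign: whenever a pseudo-orbit passes near a turning point, the two admissible continuations either have orbits that stay close forever (so either choice shadows asymptotically, once the pseudo-orbit is asymptotic) or diverge (so only one of them shadows at all, and that is the branch that must be taken). Keeping track of which branch is forced, handling the transient part of the pseudo-orbit that drifts towards the ``core'' on which the interesting dynamics lives, and checking that it is genuinely the asymptotic hypothesis on the input that upgrades mere $\eps$-tracking of the tails to asymptotic tracking, is the technical heart of the proof --- and exactly the place where the linking property, rather than bare shadowing, is what is needed.

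Finally, for the transitive addendum it suffices to prove $(4)\Rightarrow(3)$, i.e.\ that a transitive piecewise linear map with constant slope cannot have limit shadowing while failing the linking property. Here transitivity is decisive: such a map is locally eventually onto, so every nondegenerate interval eventually covers all of $I$, and this mixing can be used to convert a failure of linking --- a configuration of turning-point orbits that ``ought to link but do not'' --- into an explicit limit pseudo-orbit (with errors tending to $0$) that no point can shadow asymptotically, because any candidate would have to reconcile two incompatible itineraries forced by the obstruction. Producing this obstructing limit pseudo-orbit from a linking failure, and verifying that transitivity supplies exactly the flexibility the construction requires, is the delicate step of this part.
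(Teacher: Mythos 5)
Your overall architecture is sound and the easy links match the paper: $(1)\Rightarrow(2)$ and $(1)\Rightarrow(4)$ are definitional, and $(2)\Leftrightarrow(3)$ is exactly Chen's theorem, cited in both places. The problem is that the entire content of the theorem lies in $(3)\Rightarrow(1)$, and there your argument has a genuine gap. The strategy you describe --- apply classical shadowing to successive tails $(x_n)_{n\ge N_k}$ at accuracy $\eps_k$ and ``glue'' the resulting points --- cannot work on the strength of shadowing alone: the paper's own example in Section~4 (the circle homeomorphism of Figure~(c), after Barwell--Good--Oprocha) has shadowing, so every tail of an asymptotic pseudo-orbit is $\eps_k$-shadowable, yet no single point asymptotically shadows the whole pseudo-orbit. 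So everything rests on \emph{how} the linking property makes the stagewise shadowing points compatible, and that is precisely the step you leave as a description (``the itinerary forced \ldots is always realisable and can be matched across the interface'') rather than an argument. You correctly identify that expansivity is unavailable and that non-uniqueness of shadowing points is the obstruction, but you do not supply the mechanism that overcomes it.

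The paper's mechanism is quantitative rather than combinatorial: from Chen's proof of his Proposition~15 one extracts that linking for a constant-slope map implies the covering condition \eqref{eq:dagger} with $\lambda=s^2$ and $\eta=(s-1)\eps$ --- within a bounded number $N(\eps)$ of steps, $f$ maps an $(\eps+\eta)$-ball around $f^n(x)$ into the $(n{+}1)$-st image of the set of points tracking the orbit of $x$ to within $\lambda\eps$. Lemma~\ref{lemma:suf_slim} then runs a nested-closed-sets construction ($W_0\supset W_1\supset\cdots$) in which the tolerance $\eps_{j_k}$ is halved each time the tail of the pseudo-orbit becomes fine enough, and the covering condition is exactly what guarantees each $W_k$ is non-empty and that a point of $\bigcap_k W_k$ tracks at the shrinking scales. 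If you want to salvage your itinerary-based route you would need an equally quantitative statement (e.g.\ that linking lets every $\eps$-shadowing configuration be refined to an $\eps/2$-shadowing one within boundedly many steps), at which point you have essentially rederived \eqref{eq:dagger}. Separately, for the transitive addendum your proposed $(4)\Rightarrow(3)$ --- manufacturing an unshadowable limit pseudo-orbit from a linking failure --- is again only sketched; the paper closes this loop for free by citing Kulczycki--Kwietniak--Oprocha (transitivity $+$ limit shadowing $\Rightarrow$ shadowing), and you should do the same unless you intend to carry out that construction in detail.
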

	}
	
	{
		\renewcommand*{\thetheorem}{\ref{thm:tent}}
		\addtocounter{theorem}{-1}
		\begin{theorem}
			Let $f_s\colon [0,1]\to [0,1]$ be a tent map with $s\in (\sqrt{2},2]$ and denote its core by $C=[f_s^2(1/2),f_s(1/2)]$. The following conditions are equivalent:
			\begin{enumerate}
				\item $f_s$ has s-limit shadowing,
				\item $f_s$ has shadowing,
				\item $f_s$ has limit shadowing,
				\item $f_s|_C$ has s-limit shadowing,
				\item $f_s|_C$ has shadowing,
				\item $f_s|_C$ has limit shadowing.
			\end{enumerate}
		\end{theorem}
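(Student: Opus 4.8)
The plan is to derive all six equivalences from Theorem~\ref{thm:shadequiv}, applied separately to $f_s$ and to its core restriction $f_s|_C$. Both are continuous piecewise linear self-maps with constant slope $s>1$ (on $C$ the turning point $1/2$ is still interior, so $f_s|_C$ has the same two branches), so Theorem~\ref{thm:shadequiv} at once yields $(1)\Leftrightarrow(2)$ for $f_s$, and $(4)\Leftrightarrow(5)\Leftrightarrow(6)$ for $f_s|_C$ --- for the last chain we use the well-known fact that $f_s|_C$ is (topologically) transitive for every $s\in(\sqrt2,2]$, which is precisely the hypothesis that lets Theorem~\ref{thm:shadequiv} add limit shadowing to its list. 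Each application also records that $(2)$ is equivalent to ``$f_s$ has the linking property'' and $(5)$ to ``$f_s|_C$ has the linking property''. Note that $f_s$ itself is \emph{not} transitive on $[0,1]$, so Theorem~\ref{thm:shadequiv} does not directly produce $(3)$; this is why $(3)$ must be handled by hand. What remains is to (a) link the two clusters $\{(1),(2)\}$ and $\{(4),(5),(6)\}$, and (b) insert $(3)$.

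For (a) we argue that $f_s$ has the linking property if and only if $f_s|_C$ does, which gives $(2)\Leftrightarrow(5)$ through Theorem~\ref{thm:shadequiv}. This should be immediate from Chen's definition: the turning point $1/2$ and its whole forward orbit lie in $C$, so the combinatorial data underlying the linking property is the same over $[0,1]$ as over $C$. Alternatively, one can transfer classical shadowing directly: writing $a:=f_s^2(1/2)$ one has $[0,1]=[0,a)\cup C\cup(s/2,1]$, where $C$ is forward invariant with $f_s(C)=C$ and the transient part is funneled into $C$ by the dynamics, which near the repelling fixed point $0$ is the linear map $x\mapsto sx$; given a $\delta$-pseudo-orbit of $f_s$ that is not eventually trapped near $0$ (those that are, or that jump from near $1$ to near $0$, are $\varepsilon$-shadowed by $0$ or by $1$), it enters a $\delta$-neighbourhood of $C$, one shadows its tail by a point $y\in C$ using shadowing of $f_s|_C$, and one pulls $y$ back along the initial segment through the transient region, where $f_s$ is a local homeomorphism; since backward iteration there contracts, the pullback errors do not accumulate and the resulting point $\varepsilon$-shadows the whole pseudo-orbit. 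I expect this transfer of classical shadowing to be the main obstacle: via the linking property it is definitional bookkeeping, but the hands-on version must in particular control pseudo-orbits that linger near the fixed point $0$ for a long, $\delta$-dependent number of steps before escaping into $C$, and glue that onto a core-shadowing orbit.

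For (b): $(1)\Rightarrow(3)$ is immediate, since s-limit shadowing implies limit shadowing. For $(3)\Rightarrow(6)$, take an asymptotic pseudo-orbit $(x_n)\subseteq C$ of $f_s|_C$; it is also one for $f_s$, so by $(3)$ there is $z\in[0,1]$ with $d(f_s^n(z),x_n)\to0$. For $s<2$ the core $C$ is a nondegenerate interval bounded away from $0$ (the case $s=2$ is trivial, as then $C=[0,1]$), so $z$ is not eventually mapped to the fixed point $0$ and hence its orbit enters $C$, say $f_s^N(z)\in C$, and stays there; as $f_s(C)=C$, the map $f_s^N|_C\colon C\to C$ is onto, so we may choose $w\in C$ with $f_s^N(w)=f_s^N(z)$, whence $f_s^n(w)=f_s^n(z)$ for all $n\ge N$ and $w\in C$ asymptotically shadows $(x_n)$, proving $(6)$. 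Putting it together: $(1)\Leftrightarrow(2)\Leftrightarrow(5)$ (step (a) and Theorem~\ref{thm:shadequiv}), $(4)\Leftrightarrow(5)\Leftrightarrow(6)$ (Theorem~\ref{thm:shadequiv}), and $(1)\Rightarrow(3)\Rightarrow(6)$ (step (b)); since $(1)$ and $(6)$ are thereby equivalent, so is $(3)$, and all six conditions are equivalent.
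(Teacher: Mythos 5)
Your proposal is correct and follows essentially the same route as the paper: everything is funnelled through Theorem~\ref{thm:shadequiv} applied to $f_s$ and to $f_s|_C$ (the latter using transitivity of the core to pick up limit shadowing), the two clusters are joined by showing that linking for $f_s$ and for $f_s|_C$ are equivalent, and $(3)\Rightarrow(6)$ is handled exactly as in the paper by noting that a point asymptotically tracing a pseudo-orbit in $C$ must eventually have its orbit in $C$, and then replacing it by a preimage inside $C$. One step is under-justified, though not wrong: the claim that linking for $f_s$ and for $f_s|_C$ is ``immediate from Chen's definition'' because the turning point and its forward orbit lie in $C$. The two linking conditions are not literally the same, because $\Crit(f_s)=\{0,1/2,1\}$ while $\Crit(f_s|_C)=\{f_s^2(1/2),1/2,f_s(1/2)\}$ --- the endpoints of the ambient interval are critical points by definition, so the core restriction acquires the new critical points $f_s^2(1/2)$ and $f_s(1/2)$, and in particular one must verify that the left endpoint $f_s^2(1/2)$ of $C$ is linked to some critical point of $f_s|_C$. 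The paper does this by reducing linking of $f_s$ to ``$1/2$ is linked to itself'' and linking of $f_s|_C$ to ``$f_s^2(1/2)$ is linked to itself'', and then showing these two conditions are equivalent using the fact that for $s<2$ the only preimage of $f_s^2(1/2)$ in $C$ is $f_s(1/2)$ and the only preimage of $f_s(1/2)$ is $1/2$. This is a short argument, and your alternative hands-on transfer of classical shadowing is then unnecessary, but the equivalence of the two linking properties does require this small amount of work rather than being purely definitional.
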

	}
	
	\bigskip
	
	The article is organised as follows. In Section \ref{sec:prelim} we introduce all the basic notions we use. In Section \ref{sec:shadcond} we prove our main results, Theorems \ref{thm:shadequiv} and \ref{thm:tent}. Section \ref{sec:shad} clarifies the distinction between three notions of shadowing: limit, s-limit, and classical shadowing; we also construct a system which exhibits shadowing but not limit shadowing (Theorem \ref{thm:cntexample}). In Section \ref{sec:maximal} we include a few results providing sufficient conditions for shadowing on maximal limit sets of interval maps. Finally, Section \ref{sec:invlim} contains a result on shadowing of the shift map in inverse limit spaces.

	\section{Preliminaries}\label{sec:prelim}
	
	An interval map $f\colon [a,b]\to [a,b]$ is \emph{piecewise monotone} if there are $p_0=a<p_1<\dots <p_k<p_{k+1}=b$
	such that $f|_{[p_i,p_{i+1}]}$ is monotone for $i=0,\ldots,k$; and $f$ is \emph{piecewise linear} if moreover, for each $i$, $f|_{[p_i,p_{i+1}]}(x)=a_i+b_i x$ for some $a_i,b_i\in \R$. If further there is $s\ge 0$ such that $|b_i|=s$ for all $i$, we say that $f$ is \emph{piecewise linear with constant slope $s$}.
	
	A point $x\in [a,b]$ is a \emph{critical point} of $f$ if $x=a$, or $x=b$, or $f$ is not differentiable at $x$, or $f'(x)=0$. The set of critical points of $f$ is denoted by $\Crit(f)$.

	For every $s\in (1,2]$ define a tent map $f_s\colon [0,1]\to [0,1]$ by
	\begin{equation*}
	f_s(x)=
	\begin{cases}
	sx &\text{ if } x\in [0,1/2],\\
	s(1-x) &\text{ if } x\in [1/2,1].
	\end{cases}
	\end{equation*}
	
	\begin{definition}[Pseudo-orbit]
		A sequence $\langle x_0,x_1,x_2,\dots \rangle$ is said to be a \emph{$\delta$-pseudo-orbit} for some $\delta>0$ if $d(f(x_i),x_{i+1})<\delta$ for each $i\in\N_0$. A \emph{finite $\delta$-pseudo-orbit of length $l\ge 1$} is a finite sequence $\langle x_0,x_1,x_2,\dots, x_l\rangle$ satisfying $d(f(x_i),x_{i+1})<\delta$ for $0\le i < l$. We also say that it is a \emph{$\delta$-pseudo-orbit between $x_0$ and $x_l$}.
		
		We say that the sequence $\langle x_0,x_1,x_2,\dots \rangle$ is an \emph{asymptotic pseudo-orbit} if $\lim\limits_{i\to\infty} d(f(x_i),x_{i+1})=0$.
	\end{definition}
	
	\begin{definition}
		A point $z\in X$ is said to \emph{$\eps$-shadow} a sequence $\langle x_0,x_1,x_2,\dots \rangle$ for some $\eps>0$ if $d(x_i,f^{i}(z))<\eps$ for each $i\in\N_0$; and $z$ \emph{asymptotically sha\-dows} the sequence if $\lim\limits_{i\to\infty}d(x_i,f^{i}(z))=0$.
	\end{definition}
	
	\begin{definition}[Shadowing]
		A dynamical system $f\colon X\to X$ is said to have \emph{shadowing} if for every $\eps>0$ there exists a $\delta>0$ such that every $\delta$-pseudo-orbit is $\eps$-shadowed by some point in $X$.
	\end{definition}
	
	If a sequence $\set{x_n}_{n\in \N}$ is a $\delta$-pseudo-orbit and an asymptotic pseudo-orbit then we simply say that it is an \textit{asymptotic $\delta$-pseudo-orbit}.
	Similarly, if a point $z$ $\eps$-traces and asymptotically traces a pseudo-orbit $\set{x_n}_{n\in \N}$, then we say that $z$ \textit{asymptotically $\eps$-traces} $\set{x_n}_{n\in \N}$.
	
	\begin{definition}[s-limit shadowing]
		Let $f\colon X\rightarrow X$ be a continuous map on a compact metric space $X$. We say that \emph{$f$ has s-limit shadowing} if for every $\eps > 0$ there is $\delta > 0$ such that
		\begin{enumerate}
			\item for every $\delta$-pseudo-orbit $\set{x_n}_{n\in \N}\subset X$ of $f$, there is $y \in X$ that $\eps$-shadows $\set{x_n}_{n\in\N}$, and
			\item for every asymptotic $\delta$-pseudo-orbit $\set{z_n}_{n\in \N}\subset X$ of $f$, there is $y \in X$ that asymptotically $\eps$-shadows $\set{z_n}_{n\in\N}$.
		\end{enumerate}
	\end{definition}
	
	\begin{remark}
		Note that s-limit shadowing implies both classical and limit shadowing.
	\end{remark}
	
	\begin{definition}
		Let $f\colon X\to X$ be continuous and let $\eps>0$. \emph{A point $x \in X$ is $\eps$-linked to a point $y \in X$ by $f$} if there exists an integer $m \ge 1$ and a point $z$ such that $f^m(z) = y$ and
		$d(f^j(x),f^j(z))\le\eps$ for $j=0,\ldots, m$.
		
		We say \emph{$x\in X$ is linked to $y\in X$ by $f$} if $x$ is $\eps$-linked to $y$ by $f$ for every $\eps > 0$. \emph{A set $A\subset X$ is linked by $f$} if every $x\in A$ is linked to some $y\in A$ by $f$.
	\end{definition}
	
	\begin{definition}
		Let $f\colon [0,1]\to [0,1]$ be a continuous piecewise monotone map and let $\Crit(f)$ be the finite set of critical points of $f$. We say $f$ has the \textit{linking property} if $\Crit(f)$ is linked by $f$.
	\end{definition}
	
	The following theorem is the main result in \cite{Chen}.
	\begin{theorem}[Chen \cite{Chen}]\label{thm:linking}
		Suppose $f\colon [0,1]\to [0,1]$ is a map that is conjugate to a continuous piecewise linear map with constant slope\footnote{Note that the slope is actually $\pm s$ but throughout the text for definiteness we always take $s$ to be the positive value.} $s>1$. Then $f$ has shadowing if and only if it has the linking property.
	\end{theorem}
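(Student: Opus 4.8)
The plan is to prove both implications, after first reducing to the case that $f$ itself is piecewise linear with slopes $\pm s$, $s>1$: shadowing is a conjugacy invariant; for the piecewise monotone maps at hand the critical set reduces to the turning points together with the two endpoints, hence is a conjugacy invariant; and ``$x$ is linked to $y$'' is a $\forall\eps$-statement preserved under uniform continuity, so it too is conjugacy invariant. Two features of such $f$ drive everything. \emph{Away} from $\Crit(f)$ the map expands distances by the factor $s$; equivalently, branches of $f^{-1}$ contract by $s$, so a $\delta$-error at one step is swallowed at the next once $\delta<(s-1)\eps$, and finite compositions of inverse branches contract geometrically. \emph{At} a critical point $c$ the map folds: $f$ carries a neighbourhood of $c$ onto an interval lying entirely on one side of the critical value $f(c)$. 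This one-sidedness is the only thing that can break a pull-back argument for shadowing, and the linking hypothesis is exactly what lets one step around a fold --- by following a genuine orbit that stays close to the orbit of $c$ and lands \emph{exactly} on another critical point, where the process can begin again. (Note that if $f^m(c)\in\Crit(f)$ for some $m\ge1$ then $c$ is trivially linked, via $z=c$; so the content concerns critical points whose orbits avoid $\Crit(f)$ at positive times.)

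For ``linking $\Rightarrow$ shadowing'' I would fix $\eps>0$, attach to each of the finitely many critical points $c$ some linking data $(m_c,z_c,c')$ --- an integer $m_c\ge1$, a point $z_c$ with $f^{m_c}(z_c)=c'\in\Crit(f)$, and $d(f^j(z_c),f^j(c))$ small for $0\le j\le m_c$ --- set $M=\max_c m_c$, choose a ``reserve radius'' $\eps'\le\eps$ small compared with $s^{-M}$, and then $\delta$ small compared with $(s-1)\eps'$ and with the gaps between critical points. Given a $\delta$-pseudo-orbit $\langle x_n\rangle$, the plan is to build inductively nested closed intervals $J_0\supseteq J_1\supseteq\cdots$ with $f^n|_{J_n}$ injective, $f^j(J_n)\subseteq B(x_j,\eps)$ for $j\le n$, and $f^n(J_n)\supseteq[x_n-\eps',x_n+\eps']$; any point of $\bigcap_n J_n$ then $\eps$-shadows. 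While $[x_n-\eps',x_n+\eps']$ misses $\Crit(f)$, expansion and $\delta<(s-1)\eps'$ let us pass from $J_n$ to a suitable sub-interval $J_{n+1}$ keeping the reserve. When $[x_n-\eps',x_n+\eps']$ straddles a critical point $c$, the pseudo-orbit is pinned near $c$, and a Lipschitz estimate (using that $\eps'$ and $\delta$ are tiny relative to $s^{-M}$) shows $x_n,\dots,x_{n+m_c}$ stay within $\eps$ of $c,f(c),\dots,f^{m_c}(c)$; one then continues the pull-back along the linking data for $c$ so that after $m_c$ steps $f^{n+m_c}(J_{n+m_c})$ again contains a full reserve interval, now around $c'$, and the induction restarts; compactness finishes. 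I expect the real work here to be the bookkeeping of the various scales, together with the fact that fold-passages can \emph{chain} (after landing on $c'$ one may at once be near another critical point): one must verify the construction never stalls, and it is precisely constant slope --- uniform expansion, a uniform $M$, uniform contraction rates --- that makes this possible.

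For the converse, ``shadowing $\Rightarrow$ linking'', I would argue by contraposition: suppose a critical $c$ is not linked and pick $\eps_0>0$ with $c$ not $\eps_0$-linked to any critical point, i.e.\ the ``tube'' $W_m=\{z:d(f^j(z),f^j(c))\le\eps_0,\ 0\le j\le m\}$ satisfies $f^m(W_m)\cap\Crit(f)=\emptyset$ for every $m\ge1$; in particular the orbit of $c$ and an $\eps_0$-neighbourhood of each of its points misses every fold. Split on whether $\rho_0:=\inf_{n\ge1}d(f^n(c),\Crit(f))$ is zero. If $\rho_0=0$, the orbit of $c$ returns arbitrarily close to $\Crit(f)$, so for each small $\eta$ one forms a ``teleport'' $\delta$-pseudo-orbit running along $c$'s orbit and then jumping onto a nearby critical point $c'$; assuming shadowing and $\eps$-shadowing it by $z$, the fact that $z$'s orbit stays away from $\Crit(f)$ en route makes $f^N$ a local homeomorphism of slope $\pm s^N$ near $z$, so expansion lets one perturb $z$ to $z^{*}$ with $f^N(z^{*})=c'$ \emph{exactly} and orbit still within $\eta$ of $c$'s orbit; letting $\eta\to0$ and passing to a subsequence on which $c'$ is constant exhibits $c$ as linked, a contradiction. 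The remaining case $\rho_0>0$ --- the orbit of $c$ trapped, after time $0$, inside an expanding repeller bounded away from $\Crit(f)$ --- is where I expect the genuine difficulty: here one must show directly that shadowing fails, which I would attack by passing to the symbolic coding of the repeller and using that a subshift has shadowing only if it is of finite type, reading off an unshadowable pseudo-orbit near the orbit of $c$ (and pinpointing the obstruction introduced by the fold at $c$ itself). Making this last step rigorous, together with the scale bookkeeping of the second paragraph, is the main obstacle in the whole proof.
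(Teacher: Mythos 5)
The paper does not prove this theorem: it quotes it from Chen \cite{Chen}, and the only ingredient of Chen's proof it uses later is the covering estimate from \cite[Proposition 15]{Chen} (restated as a lemma right after the theorem), which is a quantitative form of your forward direction. Measured against that, your reduction to the constant-slope model and your overall two-implication plan are sound, and your ``linking $\Rightarrow$ shadowing'' sketch is morally Chen's pull-back argument. But the one step where linking is actually used is asserted rather than derived: the $f$-image of a two-sided interval straddling a fold is \emph{one-sided} at the critical value, so your invariant $f^n(J_n)\supseteq[x_n-\eps',x_n+\eps']$ is destroyed at a fold, and ``after $m_c$ steps $f^{n+m_c}(J_{n+m_c})$ again contains a full reserve interval, now around $c'$'' does not follow from the linking data as stated. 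Knowing that $c'$ lies in the (one-sided) image of the tube gives neither a two-sided reserve around $c'$ nor one around $x_{n+m_c}$, which can sit at distance of order $s^{M}\eps'\gg\eps'$ from $c'$. Recovering a two-sided ball of radius $s\eps$ within bounded time is exactly the content of Chen's Proposition 15, and it requires a genuine argument exploiting the second fold at $c'$; this is the crux, not bookkeeping.

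The converse is where the proposal breaks down. First, your parenthetical ``in particular the orbit of $c$ and an $\eps_0$-neighbourhood of each of its points misses every fold'' is false: non-linking only says no critical point lies in $f^m(W_m)$, and for $m\ge1$ this set is one-sided at $f^m(c)$ (because of the fold at $c$ at time $0$), so $f^m(c)$ may be arbitrarily close to a critical point lying on the uncovered side. Consequently the $\rho_0=0$ case collapses: perturbing the shadowing point $z$ to $z^{*}$ with $f^{N}(z^{*})=c'$ requires $c'\in f^{N}(W_N^{\eps})$, which is \emph{precisely} what non-linking denies; moreover $f^{N}$ need not be a homeomorphism on any usefully large neighbourhood of $z$, since $z$ may coincide with or be arbitrarily close to the fold point $c$. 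So no contradiction is reached there. In the $\rho_0>0$ case your plan rests on the principle that failure of shadowing on an invariant repeller forces failure of shadowing on $[0,1]$; this principle is false, and the paper itself records a counterexample (the full tent map has shadowing while its restriction to the invariant set $\{1/2^{i}:i\ge0\}\cup\{0\}$ does not). The converse therefore has to be rebuilt from scratch: one must exhibit, from a non-linked critical point, an explicit $\delta$-pseudo-orbit in $[0,1]$ that no point of $[0,1]$ $\eps$-shadows, and neither of your two cases currently produces one.
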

	
	In the proof of Proposition~15 in \cite{Chen}, Chen shows the following implication.
	\begin{lemma}[Chen \cite{Chen}]
		Let $f\colon [0,1]\to [0,1]$ be a map with constant slope $s>1$. If $f$ has the linking property then
		there is $\hat{\eps}>0$ such that for every $\eps\in (0,\hat{\eps})$ there is $N=N(\eps)>0$ such that for every $x\in X$ there is an integer $n=n(x,\eps)<N$ such that
		$$
		B(f^n(x), s\eps)\subset f^{n+1}(B(x,\eps)\cap f^{-1}(B_{n-1}(f(x),s^2\eps)))
		$$
		where $B_k(x,\eps)=\{y\in [0,1]: |f^i(x)-f^i(y)|<\eps \text{ for }i=0,\ldots,k\}$.
	\end{lemma}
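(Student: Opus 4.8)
The plan is to exploit the one feature that makes constant-slope maps tractable: whenever $f^k$ is monotone on an interval $J$ it is in fact \emph{linear} there, of slope $\pm s^k$, so $|f^k(J)|=s^k|J|$. Consequently the forward images $f^i(B(x,\eps))$ expand by the factor $s$ at every step on which $f$ is monotone on the current image, and, living inside $[0,1]$, they cannot keep doing so for more than $N_0(\eps):=\lceil\log(1/\eps)/\log s\rceil$ steps: within that many steps a turning point of $f$ lies in the interior of one of these images (or one of their endpoints has reached $\{0,1\}$), and folding sets in. The linking property will be used to guarantee that the \emph{constrained} tube $W_n:=B(x,\eps)\cap f^{-1}(B_{n-1}(f(x),s^2\eps))$, rather than being flattened by these folds, is still carried by $f^{n+1}$ onto an honest ball of radius $s\eps$ about $f^{n+1}(x)$ for a well-chosen $n$.

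First I would fix $\hat\eps>0$ so small that, for all $\eps\in(0,\hat\eps)$, the balls $B(c,2s^2\eps)$ with $c\in\Crit(f)$ are pairwise disjoint, each meets $\Crit(f)$ only in $c$, and $f$ on each of them is a single linear branch or two branches joined at $c$. Since $\Crit(f)$ is finite and linked, I fix for each $c$ a critical point $c'$ to which $c$ is linked and, applying the linking hypothesis at an auxiliary tolerance $\eta(\eps)$ to be calibrated later, an integer $m_c$ and a point $z_c$ with $f^{m_c}(z_c)=c'$ and $|f^j(c)-f^j(z_c)|\le\eta(\eps)$ for $0\le j\le m_c$; put $M(\eps)=\max_c m_c$ and $N(\eps)=N_0(\eps)+2M(\eps)+2$. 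Observe next that for $y\in W_n$ the defining inequalities $|f^j(y)-f^j(x)|<s^2\eps$ are automatic when $j\le 2$, so $W_0=W_1=B(x,\eps)$; hence if $f$ is monotone on $B(x,\eps)$ then $f(W_0)=B(f(x),s\eps)$ and the conclusion holds with $n=0$. So assume $\Crit(f)$ meets the interior of $B(x,\eps)$, and write $V_n:=f^n(W_n)$. The constraint at $j=n$ gives $V_n\subseteq B(f^n(x),s^2\eps)$, and the target inclusion is exactly $B(f^{n+1}(x),s\eps)\subseteq f(V_n)$. A direct computation using $|f(J)|=s|J|$ (splitting at the at most one critical point inside $B(f^n(x),s^2\eps)$, which the choice of $\hat\eps$ guarantees) shows: \emph{this holds whenever $V_n=B(f^n(x),s^2\eps)$ and $f^n(x)$ is at distance $\ge\eps$ from $\Crit(f)$.} Thus the problem reduces to producing some $n<N(\eps)$ meeting those two conditions.

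This is where the linking property is spent. After a fold, the ``free'' endpoint of the expanding tube $f^i(B(x,\eps))$ is a critical value $f(c)$, and by the link from $c$ to $c'$ carried by $z_c$ its forward orbit comes within $\eta(\eps)$ of the critical point $c'$ after at most $m_c\le M(\eps)$ further steps. Applying $f$ at that moment folds the tube essentially symmetrically about $f(c')$, which I claim simultaneously (i) refills the constrained tube to the full ball of radius $s^2\eps$ about its current centre --- since $f^{m_c}(z_c)=c'$ is \emph{exactly} critical, the half of the tube ``lost'' to the fold is recovered as the $f$-image of the sub-interval of $B(x,\eps)$ shadowing the orbit of $z_c$ --- and (ii) relocates the centre $f^n(x)$ to distance comparable with the tube radius, hence $\ge\eps$ once $\eta(\eps)$ is small enough, from $\Crit(f)$. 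If the orbit of $x$ subsequently re-enters a neighbourhood of $\Crit(f)$ the same manoeuvre applies again; since a given critical value can serve as the ``free'' endpoint only finitely often before the functional assignment $c\mapsto c'$ on the finite set $\Crit(f)$ closes up, after a bounded number of such rounds (at most $|\Crit(f)|$) one reaches a step $n\le N_0(\eps)+2M(\eps)<N(\eps)$ at which both conditions of the reduction hold, and the lemma follows.

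The first two paragraphs are essentially bookkeeping; the real obstacle is the claim in the third, i.e.\ upgrading the linking property --- a statement about shadowing each critical orbit by an orbit that eventually hits a critical point --- to the quantitative assertion that a folded constrained tube can always be \emph{re-inflated and re-centred} within $M(\eps)$ steps, \emph{uniformly in} $x$. This forces one to calibrate $\hat\eps$, $\eta(\eps)$ and $M(\eps)$ against one another so the accumulated error never exceeds the allotted $s^2\eps$, and to check that the finitely many critical points genuinely cap the number of folding rounds; this is the content of Chen's Proposition~15, and is where I expect nearly all of the work to lie. Granting it, the closing inclusion $B(f^{n+1}(x),s\eps)\subseteq f^{n+1}(W_n)$ is immediate from $|f(J)|=s|J|$.
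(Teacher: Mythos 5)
First, note that the paper gives no proof of this lemma at all: it is quoted verbatim as a statement extracted from the proof of Proposition~15 in Chen's paper, so the only argument to compare yours against is Chen's original one. On its own terms, your write-up is a reasonable description of the \emph{strategy} of that argument (uniform expansion by $s$ per step, the reduction to finding $n<N(\eps)$ with $f^n(W_n)$ equal to the full ball $B(f^n(x),s^2\eps)$ and $f^n(x)$ at distance $\geq\eps$ from $\Crit(f)$, and the role of linking in refilling the tube after a fold), and the preliminary computations ($W_0=W_1=B(x,\eps)$, the constraint $f^n(W_n)\subseteq B(f^n(x),s^2\eps)$) are correct. But it is not a proof: the entire content of the lemma is the claim in your third paragraph, and you assert items (i) and (ii) there and then explicitly defer their verification to ``Chen's Proposition~15'' --- which is circular, since the lemma \emph{is} the quantitative statement distilled from that proposition's proof.

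Concretely, two steps would not survive being written out as stated. Claim (ii), that applying $f$ at the moment the free endpoint shadows $c'$ relocates $f^n(x)$ to distance $\geq\eps$ from $\Crit(f)$, does not follow from anything you have set up: the orbit of $x$ is not controlled by the orbit of $z_c$, and nothing prevents $f^{n}(x)$ from landing near another (or the same) critical point immediately afterwards. Relatedly, your bound of ``at most $|\Crit(f)|$ rounds because the assignment $c\mapsto c'$ closes up'' does not bound the number of folding episodes: the cycle structure of $c\mapsto c'$ says nothing about how often the orbit of $x$ re-enters an $\eps$-neighbourhood of $\Crit(f)$, so the uniformity of $N(\eps)$ in $x$ --- the whole point of the lemma --- is not established. (What actually caps the process in Chen's argument is a careful accounting of the lengths of the monotone pieces of $f^{n+1}$ restricted to $W_n$ against the expansion factor $s^{n}$, not a count of critical points.) Finally, the boundary case is glossed over: when $x$ is within $\eps$ of $0$ or $1$ the set $B(x,\eps)\cap[0,1]$ is a half-interval and $f(W_0)\neq B(f(x),s\eps)$ even on a monotone branch; the endpoints are declared critical and included in the linking hypothesis precisely to handle this, and your sketch never returns to it.
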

	
	\medskip
	
	Finally, we define two important notions whose relation to shadowing will soon become apparent.
	
	\begin{definition}[Internally chain transitive sets]
		An $f$-invariant and closed set $A\subseteq X$ is said to be \emph{internally chain transitive} if given any $\delta>0$ there exists a $\delta$-pseudo-orbit between any two points of $A$ that is completely contained inside $A$. We denote by $ICT(f)$ the minimal set containing all ICT subsets of $X$ as its elements. This is a subset of the hyperspace $2^X$ of all closed non-empty subsets of $X$.
	\end{definition}
	
	\begin{definition}[$\w$-limit sets]
		The $\w$-limit set of a point $x\in X$ is the set of limit points of its orbit:
		\begin{equation*}
		\w_f(x)=\bigcap_{i=1}^\infty\overline{\bigcup_{j=i}^\infty \{f^j(x)\}}.
		\end{equation*}
		Each $\w$-limit set is a non-empty, closed, $f$-invariant subset of $X$ (see e.g.\ \cite[{Chapter IV}]{BlockCoppel}). In particular, all $\w$-limit sets belong to $2^X$. Let \begin{equation*}
		\w_f=\{ \w_f(x) \mid x\in X \} \subseteq 2^X
		\end{equation*}
		denote the set of all $\w$-limit sets in the system.
	\end{definition}
	
	It is known (see e.g.\ \cite{BarGooOprRai}) that any $\w$-limit set is also internally chain transitive. We thus have the following inclusion of sets in the hyperspace $2^X$:
	\[
	\w_f \subseteq ICT(f).
	\]
	For some systems this is a strict inclusion and it is not hard to find such examples. It is much more interesting to try to characterise systems in which $\w_f$ and $ICT(f)$ coincide. This would be useful as it is easier to check if a given set is ICT than if it is an $\w$-limit set.

	\section{Shadowing condition}\label{sec:shadcond}
	\begin{lemma}\label{lemma:suf_slim}
		Let $f\colon X \to X$ be a continuous map on the compact metric space $X$. Assume that there exist constants $\lambda\geq 1$ and $\hat{\eps}>0$ such that for every $\eps\in (0,\hat{\eps})$, there exist a positive integer $N=N(\eps)$ and $\eta=\eta(\eps)>0$ such that for each $x\in X$, there exists a positive integer $n=n(x,\eps)\leq N$ satisfying
		\begin{align}\label{eq:dagger}
		f\big(B(f^n(x),\eps+\eta)\big)\subset\{f^{n+1}(y) \colon &d(x,y)\leq \eps,\\
		&d(f^i(x),f^i(y))\leq \lambda \eps, 1\leq i \leq n\}.\nonumber %\tag{$\dagger$}
		\end{align}
		Then $f$ has s-limit shadowing.
	\end{lemma}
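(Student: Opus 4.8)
The plan is to verify the two clauses in the definition of s-limit shadowing directly, the classical part first and the asymptotic refinement afterwards. Fix a target precision $\eps_0>0$, choose $\eps\in(0,\hat\eps)$ so small that $(\lambda+1)\eps<\eps_0$, and set $N=N(\eps)$, $\eta=\eta(\eps)$. Using uniform continuity of $f,f^2,\dots,f^{N+1}$, pick $\delta_0\in(0,\eps)$ so small that every $\delta_0$-pseudo-orbit $\langle x_n\rangle$ satisfies $d(f^i(x_m),x_{m+i})<\eta/4$ for all $m$ and all $0\le i\le N+1$, and so that $d(a,b)<\delta_0$ forces $d(f^i(a),f^i(b))\le\lambda\eps$ for $1\le i\le N$. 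This $\delta_0$ will be the $\delta$ we exhibit for $\eps_0$.

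Given a $\delta_0$-pseudo-orbit $\mathbf x=\langle x_0,x_1,\dots\rangle$, cut it into blocks: set $k_0=0$ and $k_{j+1}=k_j+n_j+1$, where $n_j:=n(x_{k_j},\eps)\le N$ is the integer furnished by the hypothesis at the point $x_{k_j}$, and write
$$T_j:=\{\,y\in X:\ d(x_{k_j},y)\le\eps,\ d(f^i(x_{k_j}),f^i(y))\le\lambda\eps\ \text{for}\ 1\le i\le n_j\,\},$$
so that $(\dagger)$ at $x_{k_j}$ reads $f(B(f^{n_j}(x_{k_j}),\eps+\eta))\subseteq f^{n_j+1}(T_j)$. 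By the choice of $\delta_0$, every $y\in T_j$ satisfies $d(f^i(y),x_{k_j+i})\le\lambda\eps+\eta/4<\eps_0$ for $0\le i\le n_j$, so any point whose orbit lands in $T_j$ at time $k_j$ automatically $\eps_0$-shadows $\mathbf x$ on the window $[k_j,k_{j+1}-1]$. The heart of the argument, and the step I expect to be the main obstacle, is the \emph{gluing claim}: $\overline B(x_{k_{j+1}},\eps)\subseteq f^{n_j+1}(T_j)$ for every $j$. One has $f^{n_j+1}(T_j)\supseteq f(B(f^{n_j}(x_{k_j}),\eps+\eta))$ directly from $(\dagger)$, and since $d(f^{n_j}(x_{k_j}),x_{k_{j+1}-1})<\eta/4$ this set contains $f(B(x_{k_{j+1}-1},\eps+\tfrac{3}{4}\eta))$; what remains is to show that this $f$-image already swallows the whole $\eps$-ball about $x_{k_{j+1}}$, and this is precisely where the surplus radius $\eta$ in $(\dagger)$ and the estimate $d(f(x_{k_{j+1}-1}),x_{k_{j+1}})<\delta_0$ are consumed.

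Granting the gluing claim, fix $M$ and build a shadowing point for $\langle x_0,\dots,x_{k_M}\rangle$ by a backward pass: put $c_M:=f(x_{k_M-1})$, so $d(c_M,x_{k_M})<\delta_0$, and for $j=M-1,\dots,0$, knowing $d(c_{j+1},x_{k_{j+1}})\le\eps$, use the claim to get $c_{j+1}\in f^{n_j+1}(T_j)$ and hence $c_j\in T_j$ with $f^{n_j+1}(c_j)=c_{j+1}$ (so $d(c_j,x_{k_j})\le\eps$). Then $z_M:=c_0$ satisfies $f^{k_j}(z_M)=c_j\in T_j$ for $0\le j\le M-1$, so its orbit $\eps_0$-shadows $\mathbf x$ on each window $[k_j,k_{j+1}-1]$ with $j\le M-1$, and $f^{k_M}(z_M)=c_M$ is within $\delta_0<\eps_0$ of $x_{k_M}$. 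The sets $\Sigma_M:=\{\,z\in X:\ d(f^t(z),x_t)\le\eps_0,\ 0\le t\le k_M\,\}$ are closed, non-empty and nested, so any point of $\bigcap_M\Sigma_M$ $\eps_0$-shadows $\mathbf x$; this is clause (1).

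For clause (2), let $\langle z_n\rangle$ be an asymptotic $\delta_0$-pseudo-orbit. Choose $\eps_0=\eps_0^{(0)}>\eps_0^{(1)}>\cdots\to0$ and let $\eps^{(r)},N^{(r)},\eta^{(r)},\delta_0^{(r)}$ be the data obtained above for precision $\eps_0^{(r)}$, arranged so that $r\mapsto\delta_0^{(r)}$ and $r\mapsto\eps^{(r)}$ decrease and $\eps_0^{(r+1)}\le\eps^{(r)}$. Since $d(f(z_n),z_{n+1})\to0$ there are times $0=T_0<T_1<\cdots$ with $\langle z_n\rangle_{n\ge T_r}$ a $\delta_0^{(r)}$-pseudo-orbit. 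Run the block/backward construction on $[0,M]$ but, inside the segment $[T_r,T_{r+1})$, use the level-$r$ data; a block straddling a boundary $T_{r+1}$ causes no trouble because its incoming junction point lies within $\eps_0^{(r+1)}\le\eps^{(r)}$ of the pseudo-orbit, so the level-$r$ gluing claim still applies. Letting $M\to\infty$ and invoking the same compactness argument produces $y$ with $d(f^t(y),z_t)\le\eps_0^{(r)}$ whenever the block containing $t$ begins in segment $r$; since this segment index tends to infinity with $t$, we obtain $d(f^t(y),z_t)\le\eps_0$ for every $t$ together with $d(f^t(y),z_t)\to0$. Thus $\delta:=\delta_0$ works in both clauses.
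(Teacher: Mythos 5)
Your overall architecture (blocks governed by $n(\cdot,\eps)$, tube sets, a nested intersection of closed sets, and a level-switching scheme for the asymptotic clause) matches the paper's, but the argument breaks at exactly the step you flag as ``the main obstacle'': the gluing claim $\overline{B}(x_{k_{j+1}},\eps)\subseteq f^{n_j+1}(T_j)$ does not follow from \eqref{eq:dagger}. The hypothesis is an inclusion of the form $f(\text{ball})\subseteq\{f^{n+1}(y):\dots\}$; it bounds the $f$-image of a ball from \emph{above} but gives no lower bound on that image, so there is no reason for the right-hand side to contain any ball at all. For a general continuous $f$ the set $f\big(B(f^{n_j}(x_{k_j}),\eps+\eta)\big)$ can be a single point --- take $f\equiv 0$ on $[0,1]$, for which \eqref{eq:dagger} holds trivially --- and then it certainly does not ``swallow the whole $\eps$-ball about $x_{k_{j+1}}$''. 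A ball-covering statement of the kind you need is what Chen's lemma supplies for constant-slope maps with $s>1$ (there $B(f^n(x),s\eps)$ itself lies inside $f^{n+1}(\cdot)$), but the abstract hypothesis here is deliberately weaker. Consequently your backward pass collapses at its second step: knowing only $d(c_{j+1},x_{k_{j+1}})\le\eps$ gives no reason for $c_{j+1}$ to lie in $f^{n_j+1}(T_j)$, and the whole of clause (2) rests on the same claim.

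The repair is to propagate $f$-images of tubes rather than balls, which also forces a different block length. Cut at $k_{j+1}=k_j+n_j$ (not $k_j+n_j+1$); then $x_{k_{j+1}}$ lies within $\eta$ of $f^{n_j}(x_{k_j})$, so the \emph{next} tube satisfies $T_{j+1}\subseteq B(x_{k_{j+1}},\eps)\subseteq B(f^{n_j}(x_{k_j}),\eps+\eta)$, and applying $f$ to this chain and invoking \eqref{eq:dagger} yields $f(T_{j+1})\subseteq f^{n_j+1}(T_j)$ --- an inclusion between two sets that are both $f$-images, requiring no lower bound on $f$ of a ball. One then intersects the nested closed sets $W_j=W_{j-1}\cap f^{-k_j-1}\big(f(T_j)\big)$ and reads off the blockwise tracing estimate $d(f^i(z),x_i)\le(\lambda+1)\eps$ from $f^{k_j+1}(z)\in f(T_j)$. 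Two smaller points: you should replace $\eta$ by $\min\{\eta(\eps),\eps\}$, since nothing in the hypothesis keeps $\eta(\eps)$ small and your estimate $\lambda\eps+\eta/4<\eps_0$ needs it; and the classical clause can simply be quoted from Coven, Kan and Yorke, as the paper does.
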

	\begin{proof}
		By \cite[{Lemma~2.4}]{Coven} (see also \cite[{Lemma~2.3}]{Coven}) the assumptions are sufficient for shadowing, therefore it is enough to prove
		that for every sufficiently small $\eps>0$ (we may take $\eps<\hat{\eps}$) there is $\delta>0$ such that every asymptotic $\delta$-pseudo-orbit is asymptotically $\eps$-traced.
		
		Fix any $\eps>0$,
		put $\eps_0=\eps/(3\lambda)$, let $\eta_0=\min \set{\eta(\eps_0),\eps_0}$ and $\delta$ be such that
		$d(f^k(x_0),x_k)<\eta_0$ for any $\delta$-pseudo-orbit $\set{x_i}_{i=0}^k$, where $k=1,2,\ldots, N(\eps_0)$.
		We will show that the $\delta$ is as desired.
		
		Fix any asymptotic $\delta$-pseudo-orbit $\set{x_i}_{i=0}^\infty$.
		Denote $\delta_0=\delta$ and
		for every $m>0$ define $\eps_m=\eps_0/2^m$, let $\eta_m=\min \set{\eta(\eps_m),\eps_m}$  and let $0<\delta_m<\eps_m$ be such that
		$d(f^k(x_0),x_k)<\eta_m$ for any $\delta_m$-pseudo-orbit $\set{x_i}_{i=0}^k$, where $k=1,2,\ldots, N(\eps_m)$.
		
		For any $x\in X$ denote
		$$
		A(x,n,\gamma)=\{y : d(x,y)\leq \gamma, d(f^i(x),f^i(y))\leq \lambda\gamma, 1\leq i \leq n\}.
		$$
		We define integers $m_k, n_k, j_k$, and sets $W_k$ for all $k\geq 0$ as follows:
		$$
		m_0=0, \quad j_0=0, \quad n_0=n(x_0,\eps_{j_0}), \quad W_0=A(x_{m_0},n_0, \eps_0).
		$$
		Next, for $k\geq 1$ we put
		$$
		m_k=m_{k-1}+n_{k-1}=n_0+\ldots+n_{k-1}.
		$$
		By the definition of $j_{k-1}$ the sequence $\set{x_i}_{i=m_k}^\infty$ is an asymptotic $\delta_{j_{k-1}}$-pseudo-orbit.
		If it is also a $\delta_{j_{k-1}+1}$-pseudo-orbit then we put $j_k=j_{k-1}+1$; otherwise we put $j_k=j_{k-1}$.
		Finally, let
		\begin{align*}
		n_k & = n(x_{m_k}, \eps_{j_k}),\\
		W_k & = W_{k-1}\cap f^{-m_k-1}\big(f(A(x_{m_k},n_k,\eps_{j_k}))\big).
		\end{align*}
		
		First, we claim that for every $k\geq 0$ we have
		\begin{equation}\label{eq:star}
		f^{m_{k}+1}(W_k)\subset f\big(A(x_{m_k},n_k,\eps_{j_k})\big). %\tag{$*$}
		\end{equation}
		We will prove the claim by induction on $k$. For $k=0$ the claim holds just by definition, since
		$$
		f^{m_{0}+1}(W_0)= f(W_0)=f(A(x_{m_0},n_0, \eps_0))
		$$
		Next, fix any $s\geq 0$ and suppose that the claim holds for all $0\leq k \leq s$.
		Since, by definition,
		\begin{align*}
		f^{m_{s+1}+1}(W_{s+1})&=f^{m_{s+1}+1}\big(W_{s}\cap f^{-m_{s+1}-1}\big(f(A(x_{m_{s+1}},n_{s+1},\eps_{j_{s+1}}))\big)\big)\\
		&= f^{m_{s+1}+1}(W_{s})\cap f(A(x_{m_{s+1}},n_{s+1},\eps_{j_{s+1}}))
		\end{align*}
		it remains to prove that
		$$
		f(A(x_{m_{s+1}},n_{s+1},\eps_{j_{s+1}}))\subset f^{m_{s+1}+1}(W_{s}).
		$$
		By the choice of $j_s$ we see that $\set{x_i}_{i=m_s}^\infty$ is a $\delta_{j_s}$-pseudo-orbit and hence
		$$
		d(f^j(x_{m_{s}}),x_{m_{s}+j})<\eta_{j_s} \quad \text{ for every }0\leq j \leq N(\eps_{j_s}).
		$$
		In particular $d(f^{n_s}(x_{m_{s}}),x_{m_{s}+n_s}=x_{m_{s+1}})<\eta_{j_s}$ and thus
		$$B(x_{m_{s+1}},\eps_{j_s})\subset B(f^{n_{s}}(x_{m_s}),\eps_{j_s}+\eta_{j_s}).$$
		By the definition of $n_s$ we also have
		\begin{align*}
		f\big(B(f^{n_{s}}(x_{m_s}),\eps_{j_s}+\eta_{j_s})\big)&\subset \big\{f^{n_s+1}(y) : &&\hspace{-5.5em} d(x_{m_s},y)\leq \eps_{j_s},\\
		&  \qquad \qquad &&\hspace{-5.5em} d(f^i(x_{m_s}),f^i(y))\leq \lambda \eps_{j_s}, 1\leq i \leq n_s \big\}\\
		& =  f^{n_s+1}(A(x_{m_s},n_s,\eps_{j_s})).
		\end{align*}
		Combining the above two observations with the induction assumption we obtain
		\begin{align*}
		f(A(x_{m_{s+1}},n_{s+1},\eps_{j_{s+1}}))&\subset f(B(x_{m_{s+1}},\eps_{j_s})) \; \subset \; f(B(f^{n_{s}}(x_{m_s}),\eps_{j_s}+\eta_{j_s}))\\
		&\subset f^{n_s+1}(A(x_{m_s},n_s,\eps_{j_s}))=f^{n_s}(f^{m_s+1}(W_s))\\
		&=f^{m_{s+1}+1}(W_s).
		\end{align*}
		This completes the induction, so the claim is proved.
		
		Note that since every set $A(x,n,\gamma)$ is closed, we have a nested sequence of closed non-empty sets $W_0\supset W_1\supset \ldots$,
		and therefore there is at least one point  $z\in \bigcap_{k=0}^\infty W_k$.
		We claim that for $k=0$ and $0\leq i \leq m_1$, and also for $k>0$ and each $m_k< i \leq m_{k+1}$, we have
		$$
		d(f^i(z),x_i)\leq 2\lambda \eps_{j_k}.
		$$
		Again we use induction on $k$. First, let $k=0$ and fix any $0\leq i \leq n_0=n(x_0,\eps_0)<N(\eps_0)$. Since $z\in W_0=A(x_0,n_0,\eps_0)$
		we see that
		$d(f^i(z),f^i(x_0))\leq \lambda \eps_0$ and additionally, by the definition of $\delta_{j_0}=\delta_0$, we conclude that
		$d(f^i(x_0),x_i)<\eps_0$. Hence $d(f^i(z),x_i)\leq (\lambda+1)\eps_0\leq 2\lambda\eps_0$. The base case of induction is complete.
		
		Now fix any $k>0$ and any $m_k <  i \leq m_{k+1}$. Denote $t=i-m_k-1$ and observe that $0\leq t<n_k=n(x_{m_k},\eps_{j_k})$. Observe that by \eqref{eq:star},
		$$
		f^i(z)=f^{t}(f^{m_{k}+1}(z))\in f^t(f^{m_{k}+1}(W_k))\subset f^{t+1}(A(x_{m_k},n_k,\eps_{j_k}))
		$$
		and so $d(f^i(z),f^{t+1}(x_{m_k}))<\lambda\eps_{j_k}$. Additionally, by the choice of $j_k$ and $\delta_{j_k}$, we have $d(f^{t+1}(x_{m_k}),x_{m_k+t+1})<\eps_{j_k}$.
		Combining these two inequalities we obtain
		$$
		d(f^i(z),x_i) \leq \lambda\eps_{j_k}+\eps_{j_k}\leq 2\lambda \eps_{j_k}.
		$$
		The claim is proved.
		
		Observe that since $\set{x_i}_{i=0}^\infty$ is an asymptotic pseudo-orbit, the sequence $\set{j_k}_{k=0}^\infty$ is unbounded (i.e.\ $\lim_{k\to \infty} \eps_{j_k}=0$)
		and hence $z$ asymptotically traces $\set{x_i}_{i=0}^\infty$. Additionally,
		$$
		d(f^i(z),x_i) \leq 2\lambda \eps_{j_k}\leq 2\lambda \eps_0 < \eps
		$$
		and so, in fact, $z$ asymptotically $\eps$-traces $\set{x_i}_{i=0}^\infty$, which ends the proof.
	\end{proof}
	
	The equivalence of shadowing and linking for piecewise linear maps with constant slope was demonstrated by Chen \cite{Chen}. In fact, in the proof of \cite[{Proposition 15}]{Chen} it is shown that for such maps the linking property implies the stronger condition \eqref{eq:dagger} with $\lambda= s^2$ and $\eta=(s-1)\eps$ where $s$ is the slope of the map. This observation together with Lemma \ref{lemma:suf_slim} immediately gives the following theorem.
	\begin{theorem}\label{thm:shadequiv}
		Let $f\colon I \to I$ be a continuous piecewise linear map with a constant slope $s>1$. Then the following are equivalent:
		\begin{enumerate}
			\item\label{thm:shadequiv:1} $f$ has s-limit shadowing,
			\item\label{thm:shadequiv:2} $f$ has shadowing,
			\item\label{thm:shadequiv:3} $f$ has the linking property.
		\end{enumerate}
		If furthermore $f$ is transitive, all of the above are additionally equivalent to
		\begin{enumerate}
			\setcounter{enumi}{3}
			\item\label{thm:shadequiv:4} $f$ has limit shadowing.
		\end{enumerate}
	\end{theorem}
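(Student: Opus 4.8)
The plan is to run the cycle $(1)\Rightarrow(2)\Leftrightarrow(3)\Rightarrow(1)$ and then, under transitivity, to add a second loop $(1)\Rightarrow(4)\Rightarrow(2)$. Three of these steps come for free: $(1)\Rightarrow(2)$ and $(1)\Rightarrow(4)$ are precisely the Remark following the definition of s-limit shadowing, and $(2)\Leftrightarrow(3)$ is Theorem~\ref{thm:linking} applied to $f$ (which, being itself piecewise linear of constant slope, is trivially conjugate to one). So the genuine content is $(3)\Rightarrow(1)$ and, for transitive $f$, $(4)\Rightarrow(2)$.

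For $(3)\Rightarrow(1)$ the idea is to feed the linking property straight into Lemma~\ref{lemma:suf_slim}. I would reexamine the proof of \cite[Proposition~15]{Chen}: for a map of constant slope $s>1$ the linking property is shown there to yield some $\hat{\eps}>0$ such that for each $\eps\in(0,\hat{\eps})$ there is a uniform bound $N(\eps)$ for which every $x\in I$ admits an integer $n=n(x,\eps)\le N(\eps)$ satisfying the inclusion \eqref{eq:dagger}, with the explicit constants $\lambda=s^2$ and $\eta(\eps)=(s-1)\eps$ (so $\eps+\eta=s\eps$ and $\lambda\eps=s^2\eps$, precisely the radii carried through Chen's estimates; the lemma of \cite{Chen} quoted in Section~\ref{sec:prelim} is a slightly weaker repackaging of the same computation). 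This is verbatim the hypothesis of Lemma~\ref{lemma:suf_slim}, so that lemma hands back s-limit shadowing and $(1)\Leftrightarrow(2)\Leftrightarrow(3)$ is complete.

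For $(4)\Rightarrow(2)$ the only structural fact I would use is that a transitive map on a compact space is chain transitive, so that for every $\rho>0$ any two points of $I$ are joined by a finite $\rho$-pseudo-orbit. Argue by contradiction. If $f$ lacks shadowing then, by the standard reduction of shadowing to tracing of all \emph{finite} pseudo-orbits, there is $\eps_0>0$ and, along some sequence $\delta_j\downarrow 0$, finite $\delta_j$-pseudo-orbits $P_j=\set{x^j_0,\dots,x^j_{k_j}}$ none of which is $\eps_0$-traced by any point. Concatenate them into one sequence $\set{w_i}$: take $P_1$, then a finite $\delta_2$-pseudo-orbit from $x^1_{k_1}$ to $x^2_0$, then $P_2$, then a $\delta_3$-pseudo-orbit from $x^2_{k_2}$ to $x^3_0$, and so on. Then $\set{w_i}$ is a $\delta_1$-pseudo-orbit, and every consecutive error at a position from the start of $P_j$ onward is below $\delta_j$; since $\delta_j\to 0$, $\set{w_i}$ is asymptotic. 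Let $T_j$ be the position at which the copy of $P_j$ starts; as the blocks have positive length, $T_j\to\infty$. Limit shadowing gives $z$ with $d(f^i(z),w_i)\to 0$; fix $M$ with $d(f^i(z),w_i)<\eps_0$ for all $i\ge M$, and then $j$ with $T_j\ge M$. Now $f^{T_j}(z)$ $\eps_0$-traces $P_j$, contradicting the choice of $P_j$. Hence $f$ has shadowing, and with the first part all of $(1)$--$(4)$ are equivalent.

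The main obstacle in the first part is purely technical: locating inside Chen's argument the exact inclusion \eqref{eq:dagger} with $\lambda=s^2$ and $\eta=(s-1)\eps$ (after that, Lemma~\ref{lemma:suf_slim} does everything). In the transitive part the key idea is the concatenation trick --- planting the ``bad'' finite pseudo-orbits arbitrarily late along a single asymptotic pseudo-orbit, so that the merely asymptotic (and a priori non-uniform) tracing supplied by limit shadowing is nevertheless forced, far enough out, to be an honest $\eps_0$-trace of one of them.
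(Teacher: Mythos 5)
Your decomposition and the key steps coincide with the paper's: the equivalence $(2)\Leftrightarrow(3)$ is Theorem~\ref{thm:linking}, the implication $(3)\Rightarrow(1)$ is obtained by extracting from the proof of \cite[Proposition~15]{Chen} the inclusion \eqref{eq:dagger} with exactly the constants $\lambda=s^2$ and $\eta=(s-1)\eps$ and feeding it into Lemma~\ref{lemma:suf_slim}, and $(1)\Rightarrow(2)$, $(1)\Rightarrow(4)$ are definitional. The one place you genuinely diverge is $(4)\Rightarrow(2)$ in the transitive case: the paper simply cites \cite[Theorem~7.3]{KKO}, whereas you reprove that result from scratch. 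Your argument there is correct --- the reduction of shadowing to finite pseudo-orbits on a compact space, the use of chain transitivity (implied by transitivity) to splice the untraceable finite $\delta_j$-pseudo-orbits into a single asymptotic $\delta_1$-pseudo-orbit with the $j$-th bad block starting at positions $T_j\to\infty$, and the observation that asymptotic tracing must eventually be an honest $\eps_0$-trace of some block. This buys a self-contained proof (and makes visible that only chain transitivity, not transitivity, is needed for this implication) at the cost of reproducing a known lemma; the paper's citation is the more economical choice but hides exactly this concatenation trick.
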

	\begin{remark}
		It is worth noting that this is the best one could hope for, as one can construct a (non-transitive) constant slopes piecewise linear map with limit shadowing but without the other properties. Indeed, take a tent map $f=f_s$ for which the critical point $c=1/2$ forms a three-cycle $c\mapsto f(c) \mapsto f^2(c) \mapsto c$. This corresponds to the slope being equal to the golden ratio $s\approx 1.6180$; the map is shown in Figure~\ref{fig:+lim-sh}. Both the map $f$ over $[0,1]$ and the restriction $f|_{[f^2(c),f(c)]}$ have the linking property and thus all the shadowing properties we are considering in this article.
		
		Now take $d=1/(s+s^2) \in(0,f^2(c))$, which is a preimage of the interior fixed point under two iterates. The map $f|_{[d,f(c)]}$ no longer has linking as the prefixed point $d$ is clearly not $\eps$-linked to any of the critical points $\{d,1/2,f(c)\}$ for sufficiently small $\eps$. This implies that this restriction fails shadowing. We claim, however, that it has limit shadowing.
		
		To see this, take any asymptotic pseudo-orbit $\set{x_i}_{i=0}^\infty$ in $[d,f(c)]$. Replace any $x_i\in[d,f^2(c))$ in this sequence with $f^2(c)$ and denote the new sequence in $[f^2(c),f(c)]$ by $\set{y_i}_{i=0}^\infty$. It is easy to check that this is still an asymptotic pseudo-orbit for $f|_{[f^2(c),f(c)]}$, which, as established above, has s-limit shadowing. If we now take $z\in[f^2(c),f(c)]$ which asymptotically shadows the pseudo-orbit $\set{y_i}_{i=0}^\infty$, it is easy to show that it must also asymptotically shadow the original pseudo-orbit $\set{x_i}_{i=0}^\infty$.
		
		To conclude, $f|_{[d,f(c)]}$ is indeed a piecewise linear map with constant slopes which has limit shadowing but fails linking, and consequently does not have shadowing.\qed
	\end{remark}
	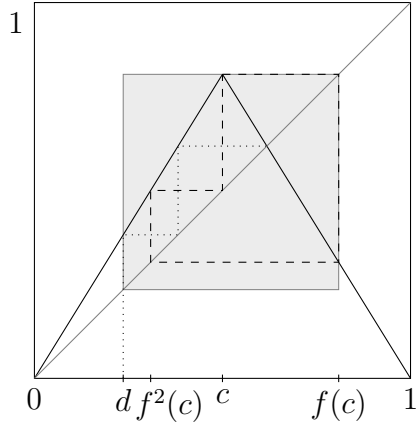
\begin{figure}
		\begin{center}
			\begin{tikzpicture}[scale=5]
			\def\s{1.61803}
			\draw (0,0) rectangle (1,1);
			\draw[gray,line width=.1pt,fill=gray!15] ({1/(\s*(1+\s))},{1/(\s*(1+\s))}) rectangle ({\s/2},{\s/2});
			\draw (0,0) node[anchor=north] {$0$};
			\draw (1,0) node[anchor=north] {$1$};
			\draw (0,1) node[anchor=north east] {$1$};
			\draw (0,0) -- (1/2,{\s/2});
			\draw (1/2,{\s/2}) -- (1,0);
			\draw[smooth,domain=0:1,gray] plot(\x,\x);
			
			\draw[line width=0.4pt,dashed] (1/2,{\s/2})--({\s/2},{\s/2})--({\s/2},{(2*\s-\s*\s)/2}) -- ({(2*\s-\s*\s)/2},{(2*\s-\s*\s)/2}) -- ({(2*\s-\s*\s)/2},{1/2}) --  (1/2,1/2) -- (1/2,{\s/2});
			
			\draw[line width=0.4pt,dotted] ({1/(\s*(1+\s))},0) -- ({1/(\s*(1+\s))},{1/(1+\s)}) -- ({1/(1+\s)},{1/(1+\s)}) -- ({1/(1+\s)}, {\s/(1+\s)}) -- ({\s/(1+\s)}, {\s/(1+\s)});
			
			\draw ({1/(\s*(1+\s))},0) node[anchor=north] {$d$};
			\draw ({1/(\s*(1+\s))},-0.01) -- ({1/(\s*(1+\s))},.01);
			\draw ({1/(2*\s)+.05},0) node[anchor=north] {$f^2(c)$};
			\draw ({1/(2*\s)},-0.01) -- ({1/(2*\s)},.01);
			\draw (1/2,0) node[anchor=north] {$c$};
			\draw (1/2,-0.01) -- (1/2,.01);
			\draw ({\s/2},0) node[anchor=north] {$f(c)$};
			\draw ({\s/2},-0.01) -- ({\s/2},.01);

			\clip(0.,0.) rectangle (1.,1.);
			\end{tikzpicture}
		\end{center}
		\caption{A map with limit shadowing but without the classical shadowing property}\label{fig:+lim-sh}
	\end{figure}
	
	We now proceed to prove Theorem \ref{thm:shadequiv}.
	\begin{proof}[Proof of Theorem \ref{thm:shadequiv}]
		Equivalence of \eqref{thm:shadequiv:2} and \eqref{thm:shadequiv:3} is provided by Theorem~\ref{thm:linking}. If $f$ has linking then the assumptions of Lemma~\ref{lemma:suf_slim} are satisfied, which shows that \eqref{thm:shadequiv:1}
		is a consequence of \eqref{thm:shadequiv:3}. The implication $\eqref{thm:shadequiv:1}\implies \eqref{thm:shadequiv:2}$ is trivial.
		
		In the transitive case the implication $\eqref{thm:shadequiv:4}\implies\eqref{thm:shadequiv:2}$ follows from a result by Kulczycki, Kwietniak, and Oprocha \cite[{Theorem 7.3}]{KKO} whereas the converse $\eqref{thm:shadequiv:1}\implies \eqref{thm:shadequiv:4}$ is immediate by definition.
	\end{proof}
	
	Parry \cite{Par66} showed that any transitive continuous piecewise monotone map of a compact interval is conjugate to a piecewise linear map with a constant slope of the same entropy. By a result of Blokh \cite{Bl82} (for a proof see \cite{BC87}) the entropy $h$ of a transitive interval map is strictly positive (actually $h\ge \log \sqrt{2}$) and thus this slope is $e^h>1$. As each of the four properties in the theorem above is preserved under conjugations, this argument allows us to replace the constant slopes assumption by transitivity.
	
	\begin{corollary}\label{cor:pwmtrans}
		For every transitive continuous piecewise monotone map of a compact interval, all the four properties: s-limit shadowing, limit shadowing, classical shadowing, and linking are equivalent.
	\end{corollary}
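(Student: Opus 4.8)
The plan is to reduce the statement to the constant-slope case already settled in Theorem~\ref{thm:shadequiv} by passing through Parry's conjugation theorem, and then to transport all four properties across the conjugacy.

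First I would invoke Parry's result \cite{Par66}: every transitive continuous piecewise monotone map $f$ of a compact interval is topologically conjugate to a continuous piecewise linear map $g$ of the same topological entropy $h$ and with constant slope $s=e^{h}$. Next I would use Blokh's theorem \cite{Bl82} (a proof of which is in \cite{BC87}): the topological entropy of a transitive interval map is bounded below by $\log\sqrt 2$, so in particular $h>0$ and hence the slope $s=e^{h}$ of $g$ satisfies $s>1$. Since transitivity is preserved by conjugacy, $g$ is a transitive continuous piecewise linear map with constant slope $s>1$, so Theorem~\ref{thm:shadequiv} applies to $g$ and gives that, for $g$, the four properties --- s-limit shadowing, classical shadowing, linking, and limit shadowing --- are mutually equivalent.

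It then remains to move each equivalence back to $f$. Classical shadowing, limit shadowing, and s-limit shadowing are invariant under topological conjugacy: a conjugating homeomorphism of a compact metric space is uniformly continuous with uniformly continuous inverse, so (after an appropriate rescaling of the constants $\eps,\delta$) it carries $\delta$-pseudo-orbits and asymptotic pseudo-orbits of one map to the corresponding objects for the other, and likewise for $\eps$-tracing and asymptotically $\eps$-tracing points. Hence $f$ has s-limit shadowing iff $g$ does, and the same for limit and classical shadowing. For the linking property I would not argue conjugacy invariance directly, since its definition refers to $\Crit(f)$, which behaves less transparently under conjugacy; instead I would invoke Chen's Theorem~\ref{thm:linking}, which asserts that any map conjugate to a piecewise linear map with constant slope $>1$ --- in particular our $f$ --- has shadowing if and only if it has the linking property. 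Combining this with the conjugacy invariance of shadowing and with the equivalences already established for $g$ closes the circle of implications for $f$, proving the corollary.

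The argument is in essence careful bookkeeping of cited results, so I expect no genuine technical obstacle. The one point that must not be glossed over is the strict inequality $s>1$ for the Parry slope: this is precisely where Blokh's lower bound on the entropy of transitive interval maps is needed, and without it Theorem~\ref{thm:shadequiv} (and Chen's Theorem~\ref{thm:linking}) could not be invoked.
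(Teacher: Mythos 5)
Your argument is correct and is essentially the paper's own proof: Parry's conjugation to a constant-slope map, Blokh's entropy bound $h\ge\log\sqrt 2$ to guarantee slope $e^h>1$, and transfer of the properties across the conjugacy back to Theorem~\ref{thm:shadequiv}. Your one small refinement --- routing the linking property through Chen's Theorem~\ref{thm:linking} (which is stated for maps merely conjugate to constant-slope maps) rather than asserting conjugacy invariance of linking directly --- is a sensible way to sidestep the fact that $\Crit(f)$ is not obviously preserved by conjugacy, but it does not change the substance of the argument.
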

	
	It is natural to ask at this point if the characterisation in Theorem \ref{thm:shadequiv} extends to maps with countably many monotone pieces. It turns out that the answer is no, as the following example shows.
	
	\begin{example}
		We construct a piecewise linear map with constant slope $s>1$ and countably many pieces of monotonicity which has the linking property but does not have shadowing.
		
		The key steps of the construction are represented in Figure \ref{fig:countable}. We first construct a nucleus of the map depicted in Figure \ref{fig:nucleus} where each critical point is mapped to another critical point in the rescaled version of the map. To be precise, the critical point denoted by $C_1$ in Figure \ref{fig:nucleus} is mapped to $C_2^+$, the rescaled (by a factor $\mu<1$) copy of the critical point $C_2$ which is in turn mapped to the original point $C_2$. The slope is everywhere the same and is denoted by $s>3$. If we denote by $a$ the length of the first (and third) piece of monotonicity of the nucleus, one easily deduces that the parameters $a,s,\mu$ have to satisfy the following set of equations
		\begin{gather*}
		sa = 1 + \mu(1-a),\\
		s\mu a = \mu + a,\\
		4 s a = s+1.
		\end{gather*}
		This system has a solution $a\approx0.301696, \mu\approx 0.657298, s\approx 4.83598$.
		
		Once the nucleus is constructed, one takes two $\mu$-scaled copies of it and glues them to both ends of the nucleus. Then another two $\mu^2$-scaled copies are added, and so on ad infinitum. This process gives the map depicted in Figure \ref{fig:mapf} which we denote by $f\colon [0,1] \to [0,1]$ after rescaling to the unit interval.
		
		Note that each critical point of $f$ (except $0$ and $1$ which are fixed) is pre-periodic and is eventually mapped onto the cycle $C_1\mapsto C_2^+ \mapsto C_2 \mapsto C_1^- \mapsto C_1$ on the nucleus. Thus $f$ has linking. It is not hard to see that $f$ is also topologically mixing. Namely, its slope exceeds $2$ and so every open interval $U\subset [0,1]$ must eventually cover two critical points (i.e.\ $f^n(U)$ contains two consecutive critical points), which in turn implies that for every $\eps>0$ there is $m>0$ such that $[\eps,1-\eps]\subset f^m(U)$. This proves topological mixing. Unfortunately, it is not completely clear whether $f$ has shadowing, and to ensure this we make a simple modification.
		
		Let $g\colon [-1,1] \to [-1,1]$ be given by
		\begin{equation*}
		g(x)=
		\begin{cases}
		-f(x) & \text{if } x\ge 0,\\
		f(-x) & \text{if } x < 0.
		\end{cases}
		\end{equation*}
		It is clear that $g$ also has linking and constant slopes, but it cannot have shadowing. The reason is that $g$ is transitive but not mixing, as $g^2$ has invariant intervals $[-1,0]$ and $[0,1]$, thus no point can have a dense orbit in $[-1,1]$ under $g^2$. On the other hand, $g^2$ is chain transitive on $[-1,1]$ because for any $\delta$-pseudo-orbit, jumping to the other side of the origin poses no problem. If $g$ had shadowing then so would $g^2$ and this would contradict a result from \cite[{Corollary 6}]{MeddaughRaines} which says that for any interval map with shadowing, any ICT set is also an $\w$-limit set.
	\end{example}
	
	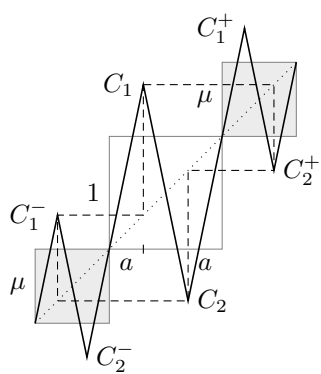
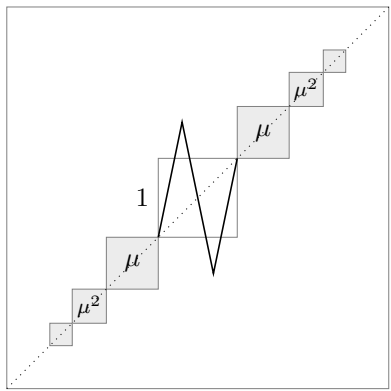
\begin{figure}
		\begin{subfigure}{0.49\textwidth}
			\begin{center}
				\begin{tikzpicture}[scale=1.5]\footnotesize
				\def\m{0.657298}
				\def\s{4.83598}
				\def\a{0.301696}
				
				\draw [gray,line width=.1pt] (0,0) rectangle (1,1);
				\draw [gray,line width=.1pt,fill=gray!15] (1,1) rectangle (1+\m,1+\m);
				\draw [gray,line width=.1pt,fill=gray!15] (0,0) rectangle (-\m,-\m);
				
				\draw[line width=0.6pt] (-\m,-\m)  -- (-\m+\m*\a,-\m+\m*\s*\a) -- (-\m*\a,-\m*\s*\a) -- (\a,\s*\a) -- (1-\a,1-\s*\a)  -- (1+\m*\a,1+\m*\s*\a) -- (1+\m-\m*\a,1+\m-\m*\s*\a) -- (1+\m,1+\m);
				\draw[dotted] (-\m,-\m) -- (1+\m, 1+\m);
				
				\draw (\a/2,0) node[anchor=north] {$a$};
				
				\draw (1-\a/2,0) node[anchor=north] {$a$};
				
				\draw[densely dashed] (\a,\s*\a) -- (\s*\a,\s*\a) -- (1+\m-\m*\a,1+\m-\m*\s*\a) -- (1-\a,1-\a) -- (1-\a,1-\s*\a) -- (-\m+\m*\a,1-\s*\a) -- (-\m+\m*\a, \m*\s*\a-\m) -- (\a,\m*\s*\a-\m) -- (\a,\s*\a);
				
				\draw (-\m,-\m/2) node[anchor=east] {$\mu$};
				\draw (0,1/2) node[anchor=east] {$1$};
				\draw (1,1+\m/2) node[anchor=east] {$\mu$};
				
				\draw (\a,\s*\a) node[anchor=east] {$C_1$};
				\draw (1-\a,1-\s*\a) node[anchor=west] {$C_2$};
				\draw (\a,-0.04) -- (\a,0.04);
				
				\draw (1+\a*\m,1+\a*\m*\s) node[anchor=east] {$C_1^+$};
				\draw (1+\m-\m*\a,1+\m-\m*\s*\a) node[anchor=west] {$C_2^+$};
				\draw (-\m+\m*\a,\a) node[anchor=east] {$C_1^-$};
				\draw (-\m*\a,-\m*\a*\s) node[anchor=west] {$C_2^-$};
				\end{tikzpicture}
			\end{center}
			\caption{The nucleus of the map}\label{fig:nucleus}
		\end{subfigure}
		\hfil
		\begin{subfigure}{0.49\textwidth}
			\begin{center}
				\begin{tikzpicture}[scale=1.05]\footnotesize
				\def\m{0.657298}
				\def\s{4.83598}
				\def\a{0.301696}
				\def\l{1.91798705581}
				
				\draw[gray,line width=.1pt] (-\l,-\l) rectangle (1+\l,1+\l);
				\draw[gray,line width=.1pt] (0,0) rectangle (1,1);
				\draw [gray,line width=.1pt,fill=gray!15] (1,1) rectangle (1+\m,1+\m);
				\draw [gray,line width=.1pt,fill=gray!15] (1+\m,1+\m) rectangle (1+\m+\m^2,1+\m+\m^2);
				\draw [gray,line width=.1pt,fill=gray!15] (1+\m+\m^2,1+\m+\m^2) rectangle (1+\m+\m^2+\m^3,1+\m+\m^2+\m^3);
				
				\draw [gray,line width=.1pt,fill=gray!15] (0,0) rectangle (-\m,-\m);
				\draw [gray,line width=.1pt,fill=gray!15] (-\m,-\m) rectangle (-\m-\m^2,-\m-\m^2);
				\draw [gray,line width=.1pt,fill=gray!15] (-\m-\m^2,-\m-\m^2) rectangle (-\m-\m^2-\m^3,-\m-\m^2-\m^3);
				
				\draw[line width=0.6pt] (0,0) -- (\a,\s*\a) -- (1-\a,1-\s*\a)  -- (1,1);
				\draw[dotted] (-\l,-\l) -- (1+\l, 1+\l);
				
				\draw (-\m/2,-\m/2) node {$\mu$};
				\draw (0,1/2) node[anchor=east] {$1$};
				\draw (1+\m/2,1+\m/2) node {$\mu$};
				\draw (-\m-\m^2/2,-\m-\m^2/2) node {\tiny$\mu^2$};
				\draw (1+\m+\m^2/2,1+\m+\m^2/2) node {\tiny$\mu^2$};
				\end{tikzpicture}
			\end{center}
			\caption{The map $f$ with the linking property}\label{fig:mapf}
		\end{subfigure}
		\caption{Construction of the map which has linking but not shadowing}\label{fig:countable}
	\end{figure}
	
	\begin{example}
		By \cite{KosO} the standard tent map $f_2$ can be perturbed to a map $f\colon [0,1]\to [0,1]$ such that:
		\begin{enumerate}
			\item $f$ has shadowing and is topologically mixing,
			\item the inverse limit of $[0,1]$ with $f$ as a unique bonding map is the pseudo-arc, and hence $f$ has infinite topological entropy (see \cite{Mouron}).
		\end{enumerate}
		By the above $f$ is not conjugate to a piecewise-linear map with constant slope, since all these maps have finite entropy.
		
		The answer to the question whether $f$ has s-limit shadowing is unknown to the authors.
	\end{example}

	\begin{theorem}\label{thm:tent}
		Let $f_s\colon [0,1]\to [0,1]$ be a tent map with $s\in (\sqrt{2},2]$ and denote its core by $C=[f_s^2(1/2),f_s(1/2)]$. The following conditions are equivalent:
		\begin{enumerate}
			\item\label{tm:slim} $f_s$ has s-limit shadowing,
			\item\label{tm:sh} $f_s$ has shadowing,
			\item\label{tm:lim} $f_s$ has limit shadowing,
			\item\label{tm:slim_c} $f_s|_C$ has s-limit shadowing,
			\item\label{tm:sh_c} $f_s|_C$ has shadowing,
			\item\label{tm:lim_c} $f_s|_C$ has limit shadowing.
		\end{enumerate}
	\end{theorem}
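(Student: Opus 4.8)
\emph{Plan.} The case $s=2$ is immediate: then $C=[0,1]$, the six conditions collapse to three, and $f_2=f_2|_C$ is a transitive continuous piecewise linear map with constant slope $2$, so Theorem~\ref{thm:shadequiv} already gives that s-limit shadowing, shadowing, limit shadowing (and linking) all coincide. So assume $s\in(\sqrt2,2)$; then $C=[f_s^2(1/2),f_s(1/2)]$ is a proper subinterval with $0<f_s^2(1/2)<1/2<f_s(1/2)=s/2<1$, it is forward invariant with $f_s(C)=C$, and $f_s|_C$ is transitive (indeed topologically exact) — this is classical and is precisely why the statement is restricted to $s>\sqrt2$. Both $f_s\colon[0,1]\to[0,1]$ and $f_s|_C\colon C\to C$ are continuous, piecewise linear with constant slope $s>1$, so Theorem~\ref{thm:shadequiv} applies to each. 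Applied to $f_s$ on $[0,1]$ it gives that (1), (2) and ``$f_s$ has the linking property'' are equivalent (we cannot use the transitive clause here, since $f_s$ is not transitive on $[0,1]$); applied to the transitive map $f_s|_C$ it gives that (4), (5), (6) and ``$f_s|_C$ has the linking property'' are all equivalent. So it remains to incorporate condition (3) and to tie together the two linking properties.

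For (3): the implication (1)$\Rightarrow$(3) is trivial, and I would prove (3)$\Rightarrow$(6) by restriction to $C$. An asymptotic pseudo-orbit $\langle x_i\rangle$ of $f_s|_C$ is an asymptotic pseudo-orbit of $f_s$ on $[0,1]$, so by (3) there is $z\in[0,1]$ with $d(f_s^i(z),x_i)\to0$. Since $x_i\in C$ and $\dist(\{0,1\},C)>0$ while $f_s^i(0)=f_s^i(1)=0$ for $i\ge1$, the point $z$ is neither $0$ nor $1$; and the orbit of any point of $(0,1)$ enters the forward invariant set $C$ after finitely many steps (a point below $C$ is moved upward by $f_s$, which acts there as multiplication by $s$, until its orbit first meets $[f_s^2(1/2),s/2]\subseteq C$, and a point above $C$ lands below $C$ in one step). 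Choose $k$ with $z':=f_s^k(z)\in C$; as $f_s|_C$ is onto $C$, pick $w\in C$ with $f_s^k(w)=z'$, so that $f_s^i(w)=f_s^{\,i-k}(z')$ for $i\ge k$ and hence $d(f_s^i(w),x_i)\to0$. Thus $w\in C$ asymptotically shadows $\langle x_i\rangle$ and $f_s|_C$ has limit shadowing.

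Finally, the two linking properties. Here $\Crit(f_s)=\{0,1/2,1\}$ and $\Crit(f_s|_C)=\{f_s^2(1/2),1/2,f_s(1/2)\}$. The endpoints $0,1$ are always linked (to the fixed point $0$, as $f_s(1)=0$), and inside $C$ one always has the links $1/2\mapsto f_s(1/2)\mapsto f_s^2(1/2)$; moreover the orbit of $1/2$ lies in $C$, hence is bounded away from $0$ and $1$, so $1/2$ can be linked by $f_s$ only to $1/2$ itself. Therefore ``$f_s$ has linking'' is equivalent to ``$1/2$ is linked to $1/2$ by $f_s$'', and ``$f_s|_C$ has linking'' is equivalent to ``$f_s^2(1/2)$ is linked in $C$ to some point of $\Crit(f_s|_C)$''. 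One direction is now free: if $f_s$ has linking then by Theorem~\ref{thm:shadequiv} and the chain (1)$\Rightarrow$(3)$\Rightarrow$(6) above, $f_s|_C$ has limit shadowing, hence linking. For the converse, suppose $f_s^2(1/2)$ is $\eps$-linked in $C$ to some $c\in\Crit(f_s|_C)$, witnessed by $w\in C$ and $m\ge1$ with $f_s^m(w)=c$ and the orbit of $w$ within $\eps$ of that of $f_s^2(1/2)$ up to time $m$. Since $f_s^{-1}(f_s(1/2))=\{1/2\}$ and $f_s^{-1}(f_s^2(1/2))\cap C=\{f_s(1/2)\}$ (the other preimage $1-s/2$ of $f_s^2(1/2)$ lies below $C$), the orbit of $w$ must in fact pass through $1/2$ at some earlier time $m'\le m$, and $m'\ge1$ once $\eps$ is below a threshold depending only on $s$. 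Now lift $w$ two steps back: because $f_s^2$ maps $[1/2-t,1/2+t]$ exactly onto the one-sided neighbourhood $[f_s^2(1/2),\,f_s^2(1/2)+s^2t]$ of $\min C=f_s^2(1/2)$, and $w\in C$ forces $w\ge f_s^2(1/2)$, for $\eps$ small there is $z\in[1/2-\eps/s^2,\,1/2+\eps/s^2]$ with $f_s^2(z)=w$. The orbit $z,f_s(z),w=f_s^2(z),\dots,f_s^{m'}(w)=1/2$ then stays within $\eps$ of the orbit $1/2,f_s(1/2),f_s^2(1/2),\dots$ of $1/2$ (using that $f_s$ is $s$-Lipschitz for the first step and the witness bound afterwards), so $1/2$ is $\eps$-linked to $1/2$ by $f_s$. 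As $\eps>0$ was arbitrary, $f_s$ has linking. Hence ``$f_s$ has linking'' $\Leftrightarrow$ ``$f_s|_C$ has linking'', and combining with the two applications of Theorem~\ref{thm:shadequiv} and with (1)$\Rightarrow$(3)$\Rightarrow$(6) we get (1)$\Leftrightarrow$(2)$\Leftrightarrow$(4)$\Leftrightarrow$(5)$\Leftrightarrow$(6) and also (3) in the same class, which is the claim.

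\emph{Main obstacle.} The delicate point is the last ``lifting'' argument: one must track which preimage branches the core witness orbit uses in order to force it through $1/2$, and then check that the two-step lift can be carried out with constants depending only on $s$, so that $\eps$-linking is preserved in the limit $\eps\to0$. An alternative to passing through linking for the implication (5)$\Rightarrow$(1) is to verify directly that $f_s$ on $[0,1]$ satisfies condition \eqref{eq:dagger} of Lemma~\ref{lemma:suf_slim}: on $C$ one quotes Chen's bound, near the repelling fixed points $0$ and $1$ one uses that $f_s$ is linear there (so $n(x,\eps)=1$ works), and on the remaining ``transit'' region the first entry time into $C$ is uniformly bounded once we stay away from $0$; this yields s-limit shadowing of $f_s$ directly, at the cost of bookkeeping the constants $\lambda$, $\eta$ and $N(\eps)$ in \eqref{eq:dagger}.
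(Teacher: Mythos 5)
Your proof is correct and follows essentially the same route as the paper: two applications of Theorem~\ref{thm:shadequiv} (with the transitive clause handling the core), the equivalence of the linking properties of $f_s$ and $f_s|_C$, and the pull-back-into-$C$ argument for (3)$\Rightarrow$(6). Your two-step lifting argument for ``core linking implies $1/2$ is linked to itself'' supplies the details the paper dismisses as straightforward, and deriving the other direction of the linking equivalence from the already-established chain (1)$\Rightarrow$(3)$\Rightarrow$(6) is a small but legitimate shortcut.
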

	\begin{proof}
		The equivalence $\eqref{tm:slim}\Longleftrightarrow\eqref{tm:sh}$ is provided by Theorem~\ref{thm:shadequiv}. Similarly, since $f_s|_C$ is a unimodal map with constant slope, we obtain $\eqref{tm:slim_c}\Longleftrightarrow\eqref{tm:sh_c}$. In fact by Theorem~\ref{thm:shadequiv} we know that for $f_s$ and $f_s|_C$, shadowing is equivalent to linking.
		
		Let $c=1/2$ be the unique critical point of $f_s$ in $(0,1)$. Note that $\Crit(f_s)=\{0,c,1\}$ and $\Crit(f_s|_{C})=\{f_s^2(c),c,f_s(c)\}$. As $f_s(1)=0$ and $f_s(0)=0$, both $0$ and $1$ are linked to $0$ by $f_s$. Similarly, both $c$ and $f_s(c)$ are linked to $f_s^2(c)$ by $f_s|_C$. Thus for $s<2$ checking the linking property of $f_s$ (resp.\ $f_s|_C$) boils down to checking whether $c$ is linked to itself (resp.\ whether $f^2_s(c)$ is linked to itself).
		
		But if $c$ is linked to itself then clearly $f_s^2(c)$ is also linked to $c$ and thus to $f_s^2(c)$. For the converse, note that if $s<2$, the only preimage of $f_s^2(c)$ under $f_s|_C$ is $f_s(c)$ and in turn the only preimage of $f_s(c)$ is $c$. It is now straightforward to check that if $f_s^2(c)$ is linked to itself then $c$ must also be linked to itself. Thus we have showed $\eqref{tm:slim}\Longleftrightarrow\eqref{tm:sh}\Longleftrightarrow\eqref{tm:slim_c}\Longleftrightarrow\eqref{tm:sh_c}$ (note that for $s=2$ this is trivially satisfied).
		
		If $f$ has s-limit shadowing then by definition it has limit shadowing, so the implications
		$\eqref{tm:slim}\Longrightarrow\eqref{tm:lim}$ and $\eqref{tm:slim_c}\Longrightarrow\eqref{tm:lim_c}$ are trivially satisfied.
		
		It is proved in \cite{KKO} that if a map is transitive and has limit shadowing then it also has shadowing.
		But it is well known (see e.g.\ \cite[{Remark 3.4.17}]{Brucks}) that each $f_s|_C$ is transitive,
		hence $\eqref{tm:lim_c}\Longrightarrow\eqref{tm:sh_c}$ is also valid.
		
		To close the circle of implications it now remains to show $\eqref{tm:lim}\Longrightarrow\eqref{tm:lim_c}$. To this end we fix any asymptotic pseudo-orbit $\set{x_i}_{i=0}^\infty \subset C$
		and let $z\in [0,1]$ be a point which asymptotically traces it under the action of $f_s$. Observe that the set $\Lambda=\omega(z,f_s)$ is $f_s$-invariant and thus $\Lambda\subset C$. Note that for $\sqrt{2}<s<2$ (again for $s=2$ the implication $\eqref{tm:lim}\Longrightarrow\eqref{tm:lim_c}$ trivially holds) the $f_s$-invariant set $\Lambda$ cannot be contained in $\{f_s^2(c), f_s(c)\}$ but must intersect the interior of $C$. Hence there exists an integer $n\ge 0$ such that $f^n(z)\in C$ and so $f^{n+j}(z)\in C$ for every $j\geq 0$.
		If we fix any point $y\in C$ such that $f^n(y)=f^n(z)$ then clearly $y$ asymptotically traces $\set{x_i}_{i=0}^\infty$. This completes the proof.
	\end{proof}
	
	\section{How limit shadowing, s-limit shadowing, and classical shadowing relate to each other}\label{sec:shad}
	In this section we describe the distinction between three notions of shadowing: limit ($\LmSP$), s-limit ($\sLmSP$), and classical shadowing property ($\SP$).
	
	\medskip
	
	Pilyugin \cite[{Theorem 3.1.3}]{Pil} showed that for circle homeomorphisms, $\SP$ implies $\LmSP$. % (he calls it pseudo-orbit tracing property --- $\mathit{POTP}$).
	In fact, he gave efficient characterisations of both $\SP$ and $\LmSP$ for orientation preserving circle homeomorphisms which fix a nowhere dense set containing at least two points (see \cite[{Theorems 3.1.1 and 3.1.2}]{Pil}).
	
	Pilyugin's results roughly state that such a homeomorphism with a hyperbolic (either repelling or attracting) fixed point has $\LmSP$, and it further has $\SP$ if the repelling and attracting fixed points alternate. We abstain from stating the full characterisation precisely and instead refer the interested reader to \cite{Pil}.
	
	Using this, it is not hard to construct maps that land in areas denoted by (a), (b), and (c) in the Venn diagram in Figure \ref{fig:venn}. The corresponding graphs are depicted in Figure \ref{fig:3maps}. (Note that the circle is represented as the interval $[-1,1]$ with endpoints identified.)
	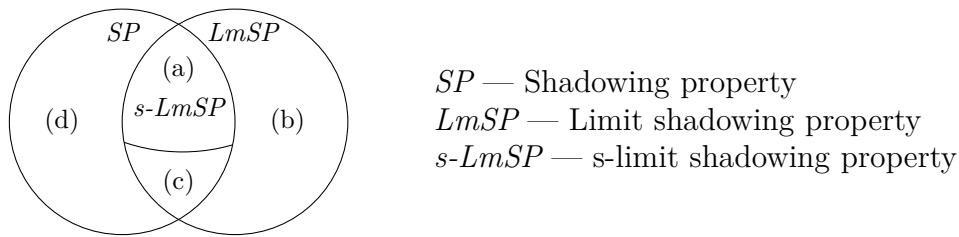
\begin{figure}[ht]
		\def\firstcircle{(0,0) circle (1.5cm)}
		\def\secondcircle{(.8cm,2cm) circle (2.4cm)}
		\def\thirdcircle{(0:1.5cm) circle (1.5cm)}
		\begin{tikzpicture}
		\begin{scope}\footnotesize
		\draw (0.75,0.7) node {(a)};
		\draw (2.2,0) node {(b)};
		\draw (0.75,-0.8) node {(c)};
		\draw (-0.8,0) node {(d)};
		\draw \firstcircle node[xshift=0cm,yshift=1.2cm] {$\SP$};
		\draw \thirdcircle node [xshift=0.1cm,yshift=1.2cm] {$\LmSP$};
		\clip \firstcircle;
		\clip \thirdcircle;
		\draw \secondcircle node [xshift=-0.05cm, yshift=-1.8cm] {$\sLmSP$};
		\end{scope}
		\begin{scope}[xshift=4cm]
		\draw (0cm,0.5cm) node[right] {$\SP$ --- Shadowing property};
		\draw (0cm,0cm) node[right] {$\LmSP$ --- Limit shadowing property};
		\draw (0cm,-0.5cm) node[right] {$\sLmSP$ --- s-limit shadowing property};
		\end{scope}
		\end{tikzpicture}\caption{Shadowing properties}\label{fig:venn}
	\end{figure}
	
	\medskip
	
	The map in Figure \ref{fig:a} has exactly one repelling fixed point (at $\pm 1$) and one attracting fixed point (at $0$). They are alternating and thus this circle homeomorphism has $\SP$ and $\LmSP$. It trivially has $\sLmSP$ as any point other than $\pm 1$ is attracted to $0$, and therefore for $0<\eps<1/2$ any orbit that $\eps$-shadows an asymptotic $\delta$-pseudo-orbit (where $\delta=\delta(\eps)$ is implied by $\SP$) in fact asymptotically shadows it.
	
	The map in Figure \ref{fig:b} has a repelling fixed point at $\pm 1$ and so it has $\LmSP$. But it does not have $\SP$ because the fixed point at $0$ is non-hyperbolic.
	
	The graph in Figure \ref{fig:c} is an extension to the interval $[-1,1]$ of the map given by Barwell, Good, and Oprocha \cite[{Example 3.5}]{AndyShadow}. Our map is given by
	\begin{equation*}
	\Phi(x)= \begin{cases}
	x+\frac{1}{2\pi\sqrt{2}} x\sin(2 \pi \ln|x|) & \text{if } x\in[-1,0),\\
	x^3 & \text{if } x\in[0,1].\\
	\end{cases}
	\end{equation*}
	One readily checks that this is indeed a strictly increasing map on $[-1,1]$. It has a sequence of fixed points converging to the fixed point at $0$ and each of them is hyperbolic (by convention, $0$ is also hyperbolic as the limit of hyperbolic fixed points). The repelling and attracting fixed points alternate, thus this homeomorphism (call it $\phi$) has both $\SP$ and $\LmSP$. However, it does not have $\sLmSP$, and the argument is essentially the one in \cite{AndyShadow}. We present it below for completeness.
	
	For $\eps=1/2$ and any small $\delta>0$ one can find a fixed point $x_\delta\in (0,\delta)$ and an integer $N\in\N$ large enough such that
	\[
	\langle 1/2, \phi(1/2), \dots, \phi^N(1/2),0,x_\delta,x_\delta,x_\delta,\dots  \rangle
	\]
	is an asymptotic $\delta$-pseudo-orbit. Any point that could potentially $\eps$-shadow this pseudo-orbit would have to be in $(0,1)$. The orbit of such a point would eventually converge to $0$ and would not asymptotically shadow the pseudo-orbit above.
	
	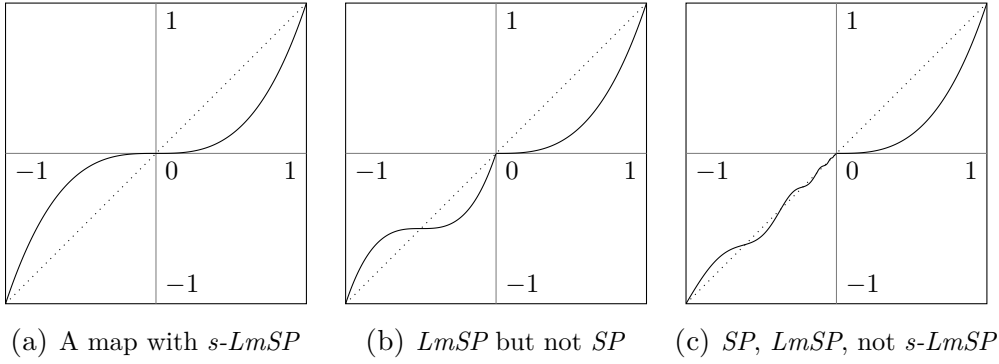
\begin{figure}
		\begin{subfigure}{0.32\textwidth}
			\begin{center}
				\begin{tikzpicture}[scale=2]
				\draw (-1,-1) rectangle (1,1);
				\draw[gray] (-1.,0.) -- (1.,0.);
				\draw[gray] (0.,-1.) -- (0.,1.);
				\footnotesize{
					\draw (0,0) node[anchor=north west] {$0$};
					\draw (1,0) node[anchor=north east] {$1$};
					\draw (0,1) node[anchor=north west] {$1$};
					\draw (0,-1) node[anchor=south west] {$-1$};
					\draw (-1,0) node[anchor=north west] {$-1$};}
				\clip(-1.,-1.) rectangle (1.,1.);
				\draw[smooth,samples=200,domain=-1:1] plot(\x,\x^3);
				\draw[smooth,domain=-1:1,dotted] plot(\x,\x);
				\end{tikzpicture}
			\end{center}
			\caption{\smaller A map with $\sLmSP$}\label{fig:a}
		\end{subfigure}
		\hfil
		\begin{subfigure}{0.32\textwidth}
			\begin{center}
				\begin{tikzpicture}[scale=2]
				\draw (-1,-1) rectangle (1,1);
				\draw[gray] (-1.,0.) -- (1.,0.);
				\draw[gray] (0.,-1.) -- (0.,1.);
				\footnotesize{
					\draw (0,0) node[anchor=north west] {$0$};
					\draw (1,0) node[anchor=north east] {$1$};
					\draw (0,1) node[anchor=north west] {$1$};
					\draw (0,-1) node[anchor=south west] {$-1$};
					\draw (-1,0) node[anchor=north west] {$-1$};}
				\clip(-1.,-1.) rectangle (1.,1.);
				\draw[smooth,samples=200,domain=-1.0:0] plot(\x,{((2*\x+1)^3-1)/2});
				\draw[smooth,samples=200,domain=0:1] plot(\x,\x^3);
				\draw[smooth,samples=2,domain=-1:1,dotted] plot(\x,\x);
				\end{tikzpicture}
			\end{center}
			\caption{\smaller $\LmSP$ but not $\SP$}\label{fig:b}
		\end{subfigure}
		\hfil
		\begin{subfigure}{0.32\textwidth}
			\begin{center}
				\begin{tikzpicture}[scale=2]
				\draw (-1,-1) rectangle (1,1);
				\draw[gray] (-1.,0.) -- (1.,0.);
				\draw[gray] (0.,-1.) -- (0.,1.);
				\footnotesize{
					\draw (0,0) node[anchor=north west] {$0$};
					\draw (1,0) node[anchor=north east] {$1$};
					\draw (0,1) node[anchor=north west] {$1$};
					\draw (0,-1) node[anchor=south west] {$-1$};
					\draw (-1,0) node[anchor=north west] {$-1$};}
				\clip(-1.,-1.) rectangle (1.,1.);
				\draw[smooth,samples=200,domain=-1.0:0] plot(\x,{(\x)+1.0/(sqrt(2.0)*2.0*pi)*(\x)*sin((ln(abs((\x))))*360)});
				\draw[smooth,samples=200,domain=0:1] plot(\x,\x^3);
				\draw[smooth,samples=2,domain=-1:1,dotted] plot(\x,\x);
				\end{tikzpicture}
			\end{center}
			\caption{\smaller $\SP$, $\LmSP$, not $\sLmSP$}\label{fig:c}
		\end{subfigure}
		\caption{Circle homeomorphisms with various shadowing properties}\label{fig:3maps}
	\end{figure}
	
	\medskip
	
	We have already said that Pilyugin ruled out the possibility of a circle homeomorphism in the region marked (d) in Figure \ref{fig:venn}. If one drops the bijectivity requirement and asks only for continuity, the answer seems to be unknown. Until recently it was unknown whether such a system exists on any compact metric space.
	
	To see why this might be interesting, recall that any $\w$-limit set is also an ICT set. As any ICT set can be traced in the limit by an asymptotic pseudo-orbit, for systems with $\LmSP$, ICT sets and $\w$-limit sets coincide. If $\SP$ necessarily implied $\LmSP$ then this would give a positive resolution to a problem posed by Barwell, Davies, and Good \cite[{Conjectures 1.2 and 1.3}]{AndyCon}. They asked if $ICT(f)=\w_f$ for every tent map (or more generally any dynamical system on a compact metric space) with $\SP$.
	
	The answer to \cite[{Conjecture 1.2}]{AndyCon} was given by Meddaugh and Raines \cite{MeddaughRaines}. They showed that the set $ICT(f)$ must be closed in $2^X$, the hyperspace of closed non-empty subsets of $X$ furnished with the Hausdorff distance. Assuming $\SP$ they further showed that $ICT(f)$ is the closure of $\w_f$ in $2^X$:
	\[
	\overline{\w_f} = ICT(f).
	\]
	When combined with a result by Blokh, Bruckner, Humke, and Smítal \cite{omegaInt} that $\w_f$ is already closed for any continuous map $f\colon [0,1]\to [0,1]$, this yields the positive answer for all interval maps.
	
	Their resolution did not, however, resolve whether $\SP$ implies $\LmSP$ even for interval maps. Indeed, we believe that this question is still open.
	
	\begin{question}
		Is it true that for interval maps, $\SP$ implies $\LmSP$?
	\end{question}
	
	Surprisingly, the answer to \cite[{Conjecture 1.3}]{AndyCon} is negative, as we now show. We shall construct a system (Theorem \ref{thm:cntexample}) on the Cantor set with $\SP$ but for which $ICT(f)\neq\w_f$. This in particular implies that this system does not have $\LmSP$ and thus lies in region (d). Independently, Gareth Davies found a similar example which remains unpublished.
	
	\subsection{A system in \texorpdfstring{$\SP\cap (\LmSP)^c$}{SP \textbackslash LmSP}}
	
	Let $\mathcal{A}$ be an alphabet, i.e.\ a finite discrete set of symbols. Recall that a \emph{word} over $\mathcal{A}$ is a finite sequence of elements in $\mathcal{A}$. If one can find a finite collection of words $W$ such that a shift space $X$ is precisely the set of sequences in which no word from $W$ appears, then the shift space is said to be of \emph{finite type}. Walters \cite{Walters} showed that the shifts of finite type are precisely those shift spaces with the shadowing property.
	
	\begin{proposition}[Walters \cite{Walters}]\label{prop:Walters}
		A shift space over a finite alphabet is of finite type if and only if it has shadowing.
	\end{proposition}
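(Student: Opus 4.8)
The plan is to derive both directions from the standard local characterisation of shifts of finite type: a shift space $X\subseteq\mathcal A^{\N_0}$ is an SFT if and only if there is $M\in\N$ with
$X=\{y\in\mathcal A^{\N_0}:\ \text{every length-}M\text{ factor of }y\text{ lies in }\mathcal L_M(X)\}$,
where $\mathcal L_M(X)$ is the set of length-$M$ words occurring in points of $X$; equivalently $X=X_W$ for the finite forbidden set $W=\mathcal A^{M}\setminus\mathcal L_M(X)$. (The two-sided case $X\subseteq\mathcal A^{\Z}$ is identical after centring all coordinate blocks at $0$ and using shadowing for $\Z$-pseudo-orbits.)

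For the implication ``finite type $\Rightarrow$ shadowing'', fix such an $M$. Given $\eps>0$, choose $k\in\N$ so large that agreement of two sequences on the coordinates $0,\dots,k$ forces them to be within $\eps$, and then choose $\delta>0$ so small that $d(\s(x),x')<\delta$ forces $\s(x)$ and $x'$ to agree on the coordinates $0,\dots,L$, where $L:=\max\{k,M\}$. Given a $\delta$-pseudo-orbit $\langle x^{(0)},x^{(1)},\dots\rangle$ in $X$ I would set $z_n:=(x^{(n)})_0$. Chaining the overlap identities $(x^{(m)})_t=(x^{(m-1)})_{t+1}$, valid for $0\le t\le L$, yields $z_{i+j}=(x^{(i)})_j$ for all $0\le j\le L+1$; hence $\s^i(z)$ agrees with $x^{(i)}$ on the coordinates $0,\dots,k$, so $z$ $\eps$-shadows the pseudo-orbit, while every length-$M$ factor $z_{[n,n+M-1]}$ equals $(x^{(n)})_{[0,M-1]}\in\mathcal L_M(X)$, so $z\in X$ by the window characterisation. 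This last step is where finiteness of the type is genuinely used; the one mildly delicate point is to make sure the chain of overlap identities never reaches past the block on which consecutive pseudo-orbit points are guaranteed to coincide, which is exactly what the choice $L=\max\{k,M\}$ arranges.

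For the converse ``shadowing $\Rightarrow$ finite type'', choose $\eps>0$ small enough that $\eps$-closeness forces agreement in coordinate $0$, let $\delta=\delta(\eps)$ come from shadowing, and choose $M\in\N$ so that agreement on coordinates $0,\dots,M$ implies distance $<\delta$. I claim $X$ equals $Y:=\{y:\ y_{[n,n+M+1]}\in\mathcal L_{M+2}(X)\text{ for all }n\ge0\}$, which is an SFT with finite forbidden set $\mathcal A^{M+2}\setminus\mathcal L_{M+2}(X)$. The inclusion $X\subseteq Y$ is trivial. For the reverse, take $y\in Y$; for each $n$ the word $y_{[n,n+M+1]}$ occurs in some point of $X$, so by shifting that point we obtain $p^{(n)}\in X$ with $(p^{(n)})_j=y_{n+j}$ for $0\le j\le M+1$. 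Then $\s(p^{(n)})$ and $p^{(n+1)}$ agree on the coordinates $0,\dots,M$, so $\langle p^{(0)},p^{(1)},\dots\rangle$ is a $\delta$-pseudo-orbit in $X$; any $z\in X$ that $\eps$-shadows it satisfies $z_n=(\s^n z)_0=(p^{(n)})_0=y_n$ for every $n$, whence $y=z\in X$. Thus $X=Y$ is an SFT.

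The whole argument is essentially Walters' from \cite{Walters}, reproduced for completeness. The only real obstacle is the index bookkeeping in the first implication (reconciling the four constants $\eps,\delta,k,M$); the second implication is a clean \emph{localisation} argument, turning shadowing into the assertion that membership in $X$ is decided by a single fixed-size window.
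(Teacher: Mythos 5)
Your proof is correct: both directions check out (the index bookkeeping $z_{i+j}=(x^{(i)})_j$ for $0\le j\le L+1$ and the window argument $X=Y$ are sound), and it is essentially Walters' standard argument. The paper itself states this proposition without proof, simply citing \cite{Walters}, so there is nothing further to compare against.
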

	
	We can now proceed with the construction. We fix $\mathcal{A}=\{0,1\}$ and we let $\Sigma_2$ be the full one-sided shift over $\mathcal{A}$. Now for each $k\in\N_0$ we set
	\begin{align*}
	X_k &=\{\xi\in\Sigma_2 \mid \text{any two $1$s in $\xi$ are separated by at least $(k+1)$ $0$s} \},\\
	X_\infty &= \{\xi \in\Sigma_2 \mid \xi \text{ has at most one symbol $1$}\}.
	\end{align*}
	Note that each $X_k$ is in fact a shift space of finite type where the set of forbidden words is exactly
	\[
	\{1\!\underbrace{0\dots0}_{l\text{ zeros}}\!1 \mid 0 \le l \le k \}=\{11, 101, 1001, \dots , 1\!\underbrace{0\dots0}_{k\text{ zeros}}\!1\}.
	\]
	We also set $N=\{{1}/{2^k} \mid k\in\N\cup\{0,\infty\}\}$ where ${1}/{2^\infty}=0$ by convention. The topology on $N$ is taken to be inherited from the real line, and the observant reader might realise that $N$ and $X_\infty$ are in fact homeomorphic. The space $N\times \Sigma_2$ is a compact metric space equipped with the $\max$-distance
	\begin{equation*}
	d ((a_1,\xi_1),(a_2,\xi_2))= \max\{|a_1 - a_2|,d_{\Sigma_2}(\xi_1,\xi_2)\},
	\end{equation*}
	where $d_{\Sigma_2}$ is the standard metric on the full shift space $\Sigma_2$.
	
	On $N\times \Sigma_2$ we define a continuous map $f$ as the product of the identity on $N$ and the shift map $\s$ on $\Sigma_2$:
	\begin{equation*}
	f(a,\xi)=(a,\sigma(\xi)).
	\end{equation*}
	This is easily seen to be continuous.
	Finally, we take
	\begin{equation*}
	X=\{(a,\xi) \in N\times \Sigma_2 \mid a=\frac{1}{2^k} \text{ and } \xi \in X_k \text{, for some $k\in \N\cup\{0,\infty\}$}\},
	\end{equation*}
	or equivalently
	\begin{equation*}
	X \; = \; \{0\}\times X_\infty \; \cup \; \bigcup_{k=0}^\infty \big\{{1}/{2^k}\big\}\times X_k.
	\end{equation*}
	This is clearly an $f$-invariant subset of $N\times\Sigma_2$. Below we show that $X$ is also closed and that $f$ restricted to $X$ provides the counter-example we have been looking for.
	
	\begin{theorem}\label{thm:cntexample}
		$(X,f)$ is a dynamical system on a compact metric space which exhibits shadowing but for which $\w_f \neq ICT(f)$.
	\end{theorem}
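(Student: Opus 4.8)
The plan is to verify three things in turn: that $X$ is closed in $N\times\Sigma_2$ (hence a compact metric space), that $(X,f)$ has shadowing, and that $\{0\}\times X_\infty$ is an ICT set that is not an $\w$-limit set, so that $\w_f\neq ICT(f)$.

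\emph{Closedness of $X$.} Suppose $(1/2^{k_n},\xi_n)\to(a,\xi)$ in $N\times\Sigma_2$ with each $\xi_n\in X_{k_n}$. By compactness of $N$, $a=1/2^k$ for some $k\in\N\cup\{0,\infty\}$. If $k$ is finite, then since the points $1/2^j$ are isolated in $N$ away from $0$ we get $k_n=k$ eventually, and $\xi\in X_k$ because $X_k$ is a subshift; hence $(a,\xi)\in X$. If $k=\infty$, then $k_n\to\infty$; were $\xi$ to have $1$'s at two coordinates $p<p'$, then for all large $n$ the words $\xi_n$ and $\xi$ would agree on coordinates $0,\dots,p'$, so $\xi_n$ would contain two $1$'s separated by fewer than $p'$ zeros, contradicting $\xi_n\in X_{k_n}$ once $k_n\ge p'$; hence $\xi\in X_\infty$ and $(a,\xi)\in X$.

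\emph{Shadowing.} This is the crux, and it rests on the fact that $f$ fixes the first coordinate while in $N$ each value $1/2^J$ with $J$ not too large is well separated from the rest of $N$. Given $\eps>0$, fix $K$ with $2^{-K}<\eps$ and put $\delta<\min\{2^{-K},\delta_0,\dots,\delta_K\}$, where $\delta_J=\delta_J(\eps)$ is a shadowing constant for the shift of finite type $X_J$ (which has shadowing by Proposition~\ref{prop:Walters}). Let $\langle(a_i,\xi_i)\rangle_{i\ge0}$ be a $\delta$-pseudo-orbit in $X$, so that $|a_i-a_{i+1}|<\delta$ and $d_{\Sigma_2}(\sigma(\xi_i),\xi_{i+1})<\delta$ for all $i$. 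Each $1/2^J$ with $J\le K-1$ is at distance $\ge 2^{-J-1}\ge 2^{-K}>\delta$ from every other point of $N$, so if some $a_i$ equals such a $1/2^J$ then $a_i\equiv 1/2^J$ for all $i$; then $\langle\xi_i\rangle$ is a $\delta$-pseudo-orbit in $X_J$, which is $\eps$-shadowed by some $\eta\in X_J$, and $(1/2^J,\eta)\in X$ $\eps$-shadows the original pseudo-orbit. Otherwise $a_i\le 2^{-K}$ for all $i$, in which case every $\xi_i$ lies in $X_K$ (since $X_k\subseteq X_K$ for $k\ge K$ and $X_\infty\subseteq X_K$), so $\langle\xi_i\rangle$ is a $\delta$-pseudo-orbit in $X_K$, $\eps$-shadowed by some $\eta\in X_K$, and $(2^{-K},\eta)\in X$ $\eps$-shadows the original pseudo-orbit because $|2^{-K}-a_i|\le 2^{-K}<\eps$. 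Hence $(X,f)$ has shadowing.

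\emph{$\w_f\neq ICT(f)$.} Since $f=\mathrm{id}_N\times\sigma$, for every $(a,\xi)\in X$ we have $\w_f(a,\xi)=\{a\}\times\w_\sigma(\xi)$, and every $\xi\in X_\infty$ satisfies $\w_\sigma(\xi)=\{\bar 0\}$ because a word with at most one $1$ is shifted onto $\bar 0=000\cdots$ in finitely many steps. On the other hand $\{0\}\times X_\infty$ is a closed $f$-invariant subset of $X$, and it is internally chain transitive: given $\delta>0$ and $(0,\xi),(0,\zeta)$ in it, apply $f$ finitely often to pass from $(0,\xi)$ to $(0,\bar 0)$ inside $X_\infty$; if $\zeta=\bar 0$ we are done, and otherwise $\zeta=e_r$ (the word whose only $1$ is at coordinate $r$), and from $(0,\bar 0)$ we jump to $(0,e_q)$ for some $q>r$ with $2^{-q}<\delta$ and then apply $f$ exactly $q-r$ times to reach $(0,e_r)$, staying in $X_\infty$ throughout. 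Hence $\{0\}\times X_\infty\in ICT(f)$. If it also lay in $\w_f$, it would equal $\{a\}\times\w_\sigma(\xi)$ for some $(a,\xi)\in X$; comparing first coordinates forces $a=0$, whence $\{0\}\times X_\infty=\{0\}\times\{\bar 0\}$, which is absurd since $X_\infty$ is infinite. Therefore $\w_f\subsetneq ICT(f)$. The only genuine difficulty is the shadowing argument, and inside it the dichotomy for the first coordinate described above — once a $\delta$-pseudo-orbit is seen to live entirely in a single fibre $X_J$ or entirely in the fibre $X_K$, one merely quotes shadowing of the relevant shift of finite type.
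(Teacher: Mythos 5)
Your proposal is correct and follows essentially the same route as the paper: you establish closedness of $X$, prove shadowing via the same dichotomy on the first coordinate (either the pseudo-orbit is trapped in a single fibre $\{1/2^J\}\times X_J$ with $J$ small, or it lies entirely in levels below $2^{-K}$ and can be shadowed inside $X_K$), and then show $\{0\}\times X_\infty$ is internally chain transitive but not an $\w$-limit set by exactly the paper's argument. The only cosmetic differences are that you verify closedness by sequences rather than by exhibiting open neighbourhoods of points in the complement, and your choice of constants ($2^{-K}<\eps$, $\delta<2^{-K}$) differs harmlessly from the paper's ($\eps/2\le 2^{-k}<\eps$, $\delta\le\eps/4$).
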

	
	Let us briefly describe the idea behind the construction. The map $f$ on each space $\big\{{1}/{2^k}\big\}\times X_k$ is conjugate to a shift of finite type and hence, by Proposition~\ref{prop:Walters}, $f$ has shadowing on those subspaces. The space $\{0\}\times X_\infty$ on the other hand is not of finite type and does not have shadowing. In the construction we exploit the fact that the sequence of spaces $\set{\big\{{1}/{2^k}\big\} \times X_k}_{k\in\N_0}$ converges to $\{0\}\times X_\infty$ in the hyperspace $2^X$ as $k\to\infty$. This allows us to shadow pseudo-orbits in the subspace $\{0\}\times X_\infty$ using real orbits in the space $\big\{{1}/{2^k}\big\}\times X_k$ for $k$ large enough. In this way we succeed (Lemma \ref{lm:shadowing}) to impose shadowing on $f$ in the whole space by having shadowing on a family of proper subspaces approximating $X$.
	
	It remains to check that $\w_f \neq ICT(f)$ for this system. The counter-example is the set $\{0\}\times X_\infty$ which is not the $\w$-limit set of any point in $X$ (Theorem~\ref{thm:cntexample}). Yet, it is the limit of the sequence of subspaces $\big\{{1}/{2^k}\big\}\times X_k$ as $k\to\infty$, each of which is the $\w$-limit set of a point in $X$. This, when combined with the result of Meddaugh and Raines, implies that $\{0\}\times X_{\infty}$ is an ICT set but not an $\w$-limit set. We shall now proceed to prove these claims.
	
	\begin{lemma}
		$X$	is a closed and hence a compact subset of $N\times \Sigma_2$.
	\end{lemma}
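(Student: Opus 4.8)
The plan is to use that $N\times\Sigma_2$ is a compact metric space, so it suffices to prove that $X$ is closed; I will do this by checking that the limit of any convergent sequence in $X$ again lies in $X$. So let $\big((a_n,\xi_n)\big)_{n\in\N}$ be a sequence in $X$ converging to some $(a,\xi)\in N\times\Sigma_2$. Writing $a_n=1/2^{k_n}$ with $k_n\in\N\cup\{0,\infty\}$ (so $k_n=\infty$ exactly when $a_n=0$), the definition of $X$ gives $\xi_n\in X_{k_n}$ for every $n$, and likewise $a=1/2^{k}$ for some $k\in\N\cup\{0,\infty\}$; it remains to prove $\xi\in X_k$.

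First suppose $k$ is finite. Each point $1/2^{k}$ with $k\in\N\cup\{0\}$ is isolated in $N$, since a sufficiently small interval around it meets $N$ only in $1/2^{k}$. Hence $a_n=1/2^{k}$ for all large $n$, so $\xi_n\in X_k$ for all large $n$; as $X_k$ is a shift of finite type it is closed in $\Sigma_2$, so $\xi=\lim_n\xi_n\in X_k$, and therefore $(a,\xi)\in\{1/2^{k}\}\times X_k\subseteq X$.

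The case $k=\infty$, i.e.\ $a=0$, carries the only real content: I must show $\xi\in X_\infty$, that is, that $\xi$ contains at most one symbol $1$. Since $a_n\to 0$ in $N$ and every neighbourhood of $0$ in $N$ excludes all but finitely many of the points $1/2^{m}$, we have $k_n\to\infty$. Suppose, toward a contradiction, that $\xi$ has at least two $1$s, and let $i<j$ be coordinates of two $1$s of $\xi$ with no $1$ of $\xi$ strictly between them. Convergence in $\Sigma_2$ forces $\xi_n$ to agree with $\xi$ on coordinates $0,\dots,j$ for all large $n$, so for such $n$ the word $\xi_n$ contains $1\,0^{j-i-1}\,1$ as the subword occupying coordinates $i,\dots,j$. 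Choosing $n$ large enough that moreover $k_n>j$, we reach a contradiction: if $k_n$ is finite then the word $1\,0^{j-i-1}\,1$ is forbidden in $X_{k_n}$ (since $j-i-1<j<k_n$), contradicting $\xi_n\in X_{k_n}$; and if $k_n=\infty$ then $\xi_n\in X_\infty$ contains at most one $1$, contradicting that it has $1$s at both $i$ and $j$. Hence $\xi$ has at most one $1$, so $\xi\in X_\infty$ and $(0,\xi)\in X$. This shows $X$ is closed, and its compactness follows at once since it is a closed subset of the compact space $N\times\Sigma_2$.

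I do not anticipate any genuine obstacle: the only step requiring an argument is the fibre over $0$, where the content is simply that the spaces $X_k$ approximate $X_\infty$ as $k\to\infty$ --- the longer the list of words $1\,0^{l}\,1$ with $l\le k$ forbidden in the definition of $X_k$, the closer $X_k$ is to the set of sequences with at most one $1$ --- which is exactly what forces the limiting fibre over $0$ to be $X_\infty$ and nothing larger. An essentially equivalent route is to show directly that $N\times\Sigma_2\setminus X$ is open: a point $(1/2^{k},\xi)\notin X$ with $k$ finite has the clopen neighbourhood $\{1/2^{k}\}\times(\Sigma_2\setminus X_k)$ disjoint from $X$, while a point $(0,\xi)\notin X$ with $1$s at coordinates $i<j$ has the open neighbourhood $\big(N\cap(-\infty,1/2^{j})\big)\times[\xi_0\cdots\xi_j]$ disjoint from $X$ by the same subword count.
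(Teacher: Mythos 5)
Your proof is correct and rests on the same essential content as the paper's: the fibre over $0$ is controlled because the forbidden word $1\,0^{j-i-1}\,1$ witnessed in the limit must already appear in $\xi_n\in X_{k_n}$ with $k_n\to\infty$, which is impossible. The paper argues via openness of the complement rather than sequential closedness, but this is exactly the ``essentially equivalent route'' you sketch in your final paragraph, with the same neighbourhoods $\{1/2^k\}\times(\Sigma_2\setminus X_k)$ and $\big([0,1/2^{j})\cap N\big)\times V$ for a cylinder $V$.
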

	\begin{proof}
		Let $(a,\xi)$ be a point in $N\times \Sigma_2\setminus X$. If $a=1/2^k>0$, this means that $\xi\in \Sigma_2\setminus X_k$. Since $X_k$ is closed in $\Sigma_2$, there is an open set $V$ around $\xi$ that does not intersect $X_k$. Since $U=\{a\}$ is an open (and closed) set in $N$, $U\times V$ is an open neighbourhood containing $(a,\xi)$ that does not intersect $X$.
		
		If $a=0$ and $\xi\not\in X_\infty$ then there exists a $k\in\N_0$ such that the word $1\!\underbrace{0\dots0}_{k\text{ zeros}}\!1$ occurs somewhere in $\xi$. Take $V$ to be the set of all $0$-$1$ sequences in $\Sigma_2$ which have this word at the same position as $\xi$ does. This set is easily seen to be clopen. It is indeed what is called a cylinder set in $\Sigma_2$ (see e.g.\ \cite{LinMar}). Setting $U=[0,1/2^k)\cap N$ one readily checks that $U\times V$ is an open neighbourhood containing $(a,\xi)$ but not intersecting $X$.
	\end{proof}
	
	\begin{lemma}\label{lm:shadowing}
		$(X,f)$ has shadowing.
	\end{lemma}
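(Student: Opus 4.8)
The plan is to exploit two structural features of the system. First, $f$ is the identity on the first coordinate, so along any $\delta$-pseudo-orbit $\langle(a_n,\xi_n)\rangle_{n\ge 0}$ the sequence $\langle a_n\rangle$ only moves in steps of size $<\delta$ inside $N$; since the points $1,1/2,\dots,1/2^{K}$ of $N$ are well separated from one another while the points below $1/2^{K+1}$ cluster at $0$, for small enough $\delta$ the first coordinate is essentially frozen. Second, each fibre $X_k$ is a shift of finite type, hence has shadowing by Proposition~\ref{prop:Walters}, and crucially $X_\infty\subseteq X_k$ for every $k\in\N_0$. We trace a given pseudo-orbit by selecting an appropriate fibre and invoking shadowing there.

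Concretely, fix $\eps>0$ and pick $K\in\N$ with $2^{-K}<\eps$. The shift spaces $X_0,X_1,\dots,X_{K+1}$ are finitely many shifts of finite type, so each $\sigma|_{X_j}$ has shadowing; let $\delta_j>0$ witness $\eps$-shadowing for $\sigma|_{X_j}$ and set $\delta=\min\{\delta_0,\dots,\delta_{K+1},2^{-(K+1)}\}$. Given a $\delta$-pseudo-orbit $\langle(a_n,\xi_n)\rangle$ in $X$, the first step is to establish the dichotomy: either \textup{(A)} there is a fixed $j\le K$ with $a_n=1/2^j$ for all $n$, or \textup{(B)} $a_n\le 1/2^{K+1}$ for all $n$. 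Indeed $|a_{n+1}-a_n|<\delta$, and the distance from $1/2^j$ (for $j\le K$) to the nearest other point of $N$ is at least $2^{-(j+1)}\ge 2^{-(K+1)}\ge\delta$; hence the sequence $\langle a_n\rangle$ can neither leave nor enter $\{1,1/2,\dots,1/2^{K}\}$, nor move within it, which yields the dichotomy.

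In case \textup{(A)}, $\langle\xi_n\rangle$ is a $\delta$-pseudo-orbit of $\sigma|_{X_j}$ (each $\xi_n\in X_j$ because $(1/2^j,\xi_n)\in X$, and $\sigma(X_j)\subseteq X_j$), so since $\delta\le\delta_j$ there is $\eta\in X_j$ with $d_{\Sigma_2}(\sigma^n\eta,\xi_n)<\eps$ for all $n$; then $(1/2^j,\eta)\in X$ $\eps$-shadows the pseudo-orbit, the first coordinates coinciding. In case \textup{(B)}, every $\xi_n$ lies in $X_{K+1}$: if $a_n=1/2^{k}$ with $k\ge K+1$ then $\xi_n\in X_k\subseteq X_{K+1}$, and if $a_n=0$ then $\xi_n\in X_\infty\subseteq X_{K+1}$. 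Thus $\langle\xi_n\rangle$ is a $\delta$-pseudo-orbit of $\sigma|_{X_{K+1}}$, giving $\eta\in X_{K+1}$ with $d_{\Sigma_2}(\sigma^n\eta,\xi_n)<\eps$ for all $n$; then $(1/2^{K+1},\eta)\in X$, and since $a_n\le 1/2^{K+1}<2^{-K}<\eps$ we obtain $d\big(f^n(1/2^{K+1},\eta),(a_n,\xi_n)\big)=\max\{|1/2^{K+1}-a_n|,\,d_{\Sigma_2}(\sigma^n\eta,\xi_n)\}<\eps$. Either way the pseudo-orbit is $\eps$-shadowed by a point of $X$, which proves the lemma.

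The one genuinely load-bearing point is case \textup{(B)}: a pseudo-orbit that drifts toward the fibre $\{0\}\times X_\infty$ — the unique fibre that fails to have shadowing — cannot in general be traced inside that fibre, but it can be traced by a true orbit in a nearby finite-type fibre $\{1/2^{K+1}\}\times X_{K+1}$, precisely because $X_\infty\subseteq X_{K+1}$ and $1/2^{K+1}$ is within $\eps$ of $0$. Everything else — the rigidity of the first coordinate that produces the dichotomy, and the uniformity of $\delta$ over the finitely many fibres $X_0,\dots,X_{K+1}$ — is routine once this observation is in place.
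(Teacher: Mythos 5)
Your proof is correct and follows essentially the same strategy as the paper's: show that for small $\delta$ the first coordinate of a pseudo-orbit is either frozen at some $1/2^j$ with $j$ bounded (trace in the finite-type fibre $X_j$) or trapped below a threshold comparable to $\eps$ (trace in a single nearby finite-type fibre, exploiting that the $X_k$ are nested and $X_\infty\subseteq X_k$). The only differences are cosmetic choices of constants.
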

	\begin{proof}
		Let $\eps>0$ and additionally assume $\eps<1$. Choose a $k\in\N$ so that $\eps/2\leq 1/2^k <\eps$. Set $\delta = \min \{ \eps/4,\delta_1(\eps),\dots,\delta_k(\eps)\} >0$, where each $\delta_j(\eps)$ for $1\le j \le k$ is a positive number chosen so that every $\delta_j(\eps)$-pseudo-orbit in $X_j$ is $\eps$-shadowed. This can be done by Proposition \ref{prop:Walters}. We claim that for this $\delta$, every $\delta$-pseudo-orbit in $X$ is $\eps$-shadowed by a real orbit.
		
		Let $\langle(a_n,\xi_n)\rangle_{n\in\N_0}$ be a $\delta$-pseudo-orbit in $X$. We distinguish two cases.
		
		\begin{itemize}
			\item[\underline{Case 1.}]
			We first suppose that $a_0>\eps/2$. If $a_1>a_0$ then $a_1\geq 2 a_0$ and hence $|a_1-a_0|>\eps/2>\delta$. On the other hand, if $a_1<a_0$ then $2a_1\leq a_0$ and hence $|a_0-a_1|\geq a_0/2>\eps/4\geq \delta$. Therefore, it must be that $a_1=a_0$ and inductively $a_n=a_0$ for all $n\in \N$. Which means that in this case the whole pseudo orbit is actually contained in the same subspace $\big\{{1}/{2^m}\big\}\times X_m$ where $a_0={1}/{2^m}$.
			
			Clearly $m\leq k$. Since $\delta \le \delta_m(\eps)$, we have that $\langle \xi_n \rangle _{n\in\N_0}$ is a $\delta_m(\eps)$-pseudo-orbit in $X_m$, hence we can choose a point $\xi^*$ that $\eps$-shadows it. But then the point $(a_0,\xi^*)$ clearly $\eps$-shadows the initial pseudo-orbit.
			
			\item[\underline{Case 2.}]
			We now suppose $a_0\leq\eps/2$. A similar argument to the one above shows that $a_n\leq\eps/2$, and hence $a_n\leq 1/2^k$ for all $n\in\N$. Since $(X_n)_{n\in\N_0}$ form a decreasing sequence of sets, each $\xi_n$ is contained in the space $X_k$. The sequence $\langle\xi_n\rangle_{n\in\N_0}$ is a $\delta_k(\eps)$-pseudo-orbit in $X_k$, hence there exists a point $\xi^*$ that $\eps$-shadows it. Again, it is readily checked that $({1}/{2^k}, \xi^*)$ $\eps$-shadows the initial pseudo-orbit.\qedhere
		\end{itemize}
	\end{proof}
	
	\begin{proof}[Proof of Theorem \ref{thm:cntexample}]
		It suffices to note that $\{0\}\times X_\infty$ is an ICT set that is not an $\w$-limit set of any of the points in $X$. If it were an $\w$-limit set of some point $(a,\xi) \in X$, it would have to be that $a=0$. But it is not hard to see that the $\w$-limit set of any point in $\{0\}\times X_\infty$ is the singleton $\{(0,0^\infty)\}$ as each point in $\{0\}\times X_\infty$ is eventually mapped to the fixed point $(0,0^\infty)$. Here by $0^\infty$ we denote the sequence in $\Sigma_2$ consisting only of zeros. Therefore the set $\{0\}\times X_\infty$ is not in $\w_f$.
		
		It remains to be shown that $\{0\}\times X_\infty$ is in $ICT(f)$. To simplify notation we shall instead show that the set $X_\infty$ is ICT under the shift map $\sigma$. This is clearly an equivalent statement. Let $\delta > 0$ and let $\xi$ and $\eta$ be any two points in $X_\infty$. We can always choose $k\in\N$ such that $\s^k(\xi)=0^\infty$. If $\eta=0^\infty$ we are done as
		\[
		\langle \xi, \s(\xi), \dots,\s^k(\xi) = \eta \rangle
		\]
		is a $\delta$-pseudo-orbit between $\xi$ and $\eta$.
		
		If otherwise $\eta = 0^m10^\infty$ for some $m\in\N_0$, choose $n>m$ large enough so that the point $\zeta=0^n10^\infty$ is $\delta$-close to $0^\infty$. Then one can check that
		\[
		\langle \xi, \s(\xi), \dots,\s^k(\xi)=0^\infty, \zeta, \s(\zeta), \dots, \s^{n-m}(\zeta) = \eta \rangle
		\]
		is a $\delta$-pseudo-orbit between $\xi$ and $\eta$.
	\end{proof}
	
	The dynamical system $(X,f)$ in Theorem~\ref{thm:cntexample} is not transitive but it is possible to modify it and produce a transitive system on the Cantor set with $\SP$ but not $\LmSP$.
	
	\begin{theorem}\label{thm:cntexampleTransitive}
		There exists a topologically transitive dynamical system on a compact metric space which exhibits shadowing but for which $\w_f \neq ICT(f)$ and which, in particular, fails limit shadowing.
	\end{theorem}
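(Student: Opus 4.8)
The strategy is to upgrade the system $(X,f)$ of Theorem~\ref{thm:cntexample} to a transitive one without losing either shadowing or the strict inclusion $\w_f\subsetneq ICT(f)$. It is worth isolating first what is actually needed for the latter: if $(X',f')$ contains a sequence of $\w$-limit sets $A_k$ (for instance periodic orbits) converging in $2^{X'}$ to a set $A$ that is never an $\w$-limit set, then, since $ICT(f')$ is always closed in the hyperspace \cite{MeddaughRaines}, we get $A\in ICT(f')\setminus\w_{f'}$, hence $\w_{f'}\subsetneq ICT(f')$; and because limit shadowing would force $\w_{f'}=ICT(f')$, such a system automatically fails limit shadowing as well. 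A natural candidate for the converging sequence is provided already inside $X$: the points $(1/2^k,(10^{k+1})^\infty)$ are periodic under $f$ of period $k+2$, and the orbits $P_k$ they generate Hausdorff-converge, as $k\to\infty$, to the frontier $F:=\{0\}\times X_\infty$. So the task reduces to: modify $f$ so as to obtain a transitive system with shadowing, in such a way that the $P_k$ survive as periodic orbits and $F$ (or, if that fails, some other invariant set appearing as a limit of $\w$-limit sets) still fails to be an $\w$-limit set.

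For the modification, the plan is to keep the layers $L_k:=\{1/2^k\}\times X_k$ — each a transitive shift of finite type — and to splice them together by inserting a clopen family of ``gates'' along which orbits may pass from $L_k$ to $L_{k+1}$ and, less often, back to $L_0$, dovetailed so that a single orbit visits every layer along a set dense in it; this yields topological transitivity. The gates are chosen disjoint from the periodic orbits $P_k$, so those remain periodic and still converge to $F$, and the rewiring is done so that $f'$ extends to a continuous self-map of a compact zero-dimensional space $X'$ (the closure of $X$ together with the gate-generated orbits), agreeing with $f$ on each $P_k$. One then checks, as in the proof of Theorem~\ref{thm:cntexample}, that $F$ is ICT, while arranging the gates so that no orbit accumulates on the whole of $F$, so that $F\notin\w_{f'}$; combined with the first paragraph this gives $\w_{f'}\neq ICT(f')$ and failure of limit shadowing.

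Shadowing of $(X',f')$ should be obtained by the recipe of Lemma~\ref{lm:shadowing}. Given $\eps\in(0,1)$, pick $k$ with $\eps/2\le 1/2^k<\eps$ and take $\delta$ below $\eps/4$, below the distance from $F$ to the rewired region, and below the Walters shadowing constants $\delta_1(\eps),\dots,\delta_k(\eps)$ for the shifts of finite type $X_1,\dots,X_k$ (Proposition~\ref{prop:Walters}). A $\delta$-pseudo-orbit of $f'$ either stays for all time in the unmodified part of a single layer $L_m$ with $m\le k$, and is $\eps$-shadowed there; or it keeps its first coordinate $\le 1/2^k$, where $X_n\subseteq X_k$ for $n\ge k$ and $f'$ looks, up to scale $\eps/2$, like the shift on $X_k$, so Case~2 of Lemma~\ref{lm:shadowing} applies verbatim; or it passes through the gates, in which case one cuts the time axis at the gate visits and, since away from the gates the dynamics is that of finitely many shifts of finite type, shadows each finite block by a genuine $f'$-orbit that is permitted to use the gates, concatenating the blocks. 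With shadowing and transitivity in hand, the first paragraph finishes the proof.

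The main obstacle is exactly the construction of the transitive modification, because three requirements pull against one another. Transitivity forces orbits to migrate between layers $L_k$ of arbitrarily large index; but such layers sit arbitrarily close to the frontier $F$, so the rewiring takes place near $F$ and is therefore severely constrained by continuity there — it has to be essentially ``increment the index and shift,'' carried out on honestly clopen (cylinder) sets, possibly after enlarging the fibres with new points so that $f'$ extends continuously across $F$. Simultaneously the gates must be sparse enough and separated enough from $F$ that the shadowing estimate of Lemma~\ref{lm:shadowing} does not collapse, and they must be arranged so that no orbit climbing through them accumulates on all of $F$ (otherwise $F$ itself becomes an $\w$-limit set and the witness for $\w_{f'}\neq ICT(f')$ is lost, forcing one to produce another one). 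Reconciling continuity near the frontier, the survival of a suitable ICT-but-not-$\w$-limit set, and the shadowing estimate, all while genuinely creating a dense orbit, is the technical heart of the argument.
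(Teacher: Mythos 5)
Your high-level strategy is the same as the paper's: keep the layers $\{1/2^k\}\times X_k$, splice them together to obtain transitivity, and argue that shadowing and a witness in $ICT(f)\setminus\w_f$ survive. Your reduction in the first paragraph is sound (the periodic orbits of $(1/2^k,(10^{k+1})^\infty)$ do Hausdorff-converge to $\{0\}\times X_\infty$, and closedness of $ICT$ plus the failure of $F$ to be an $\w$-limit set gives everything). But the proof has a genuine gap exactly where you locate "the technical heart": the transitive modification is never constructed. Everything beyond Theorem~\ref{thm:cntexample} lives in that construction, and a description of "gates" that are "sparse enough", "separated enough from $F$", and "dovetailed" is a wish list, not a definition; in particular your treatment of pseudo-orbits that pass through gates ("cut the time axis at the gate visits and concatenate the blocks") silently assumes that shadowing orbits of consecutive finite blocks can be glued, which is not automatic and is precisely what a concrete construction must deliver. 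The paper resolves this by recoding $X$ as a subshift over a countable compact alphabet $\{a_0,a_1,\dots;0\}\cup\{b_0,b_1,\dots;1\}$ (with $a_k\to 0$, $b_k\to 1$ in $[0,1]$), adjoining a single transit symbol $2$ subject to the rule that occurrences of $a_k$ and $a_l$ with $k\neq l$ must be separated by at least $k+l$ symbols $2$, and proving a separate lemma: if $F_1\subset F_2\subset\cdots$ are closed invariant sets with shadowing whose union is dense and which admit nonexpanding, $f$-commuting nearest-point retractions $\pi_n\colon X\to F_n$, then $X$ has shadowing. Shadowing of the whole space then follows by projecting an arbitrary pseudo-orbit onto a single finite-type layer $F_n$, which sidesteps the concatenation problem entirely. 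Some version of this (or another equally explicit device) is needed for your argument to close.

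There is also a conceptual slip in your criterion for $F\notin\w_{f'}$. You ask that the gates be arranged "so that no orbit accumulates on the whole of $F$". In a topologically transitive system on a perfect compact space the transitive orbit accumulates on every point, in particular on all of $F$; so that requirement is unachievable and is in any case not what is needed. What must be shown is that every orbit whose $\w$-limit set contains $F$ has $\w$-limit set strictly larger than $F$. In the paper's coding this is forced by the separation rule: approximating points of $X_\infty$ deep in the hierarchy requires visiting symbols $a_k$ for arbitrarily large $k$, hence traversing arbitrarily long runs of the symbol $2$, so the $\w$-limit set of any such orbit also contains points involving $2$ and cannot equal $F$. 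Your construction needs an analogous mechanism, stated and verified, rather than the (impossible) non-accumulation condition.
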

	
	Below we shall give a sketch of the proof but the details are left to the reader. We shall need the following lemma.
	
	\begin{lemma}
		Let $(X,f)$ be a dynamical system and let $F_1 \subset F_2 \subset \dots \subset X$ be an increasing sequence of closed $f$-invariant subsets converging to $X$, $\overline{\bigcup_{i=1}^\infty F_i} = X$. Further assume that for each $n\in\N$ there exists a continuous map $\pi_n \colon X \to F_n$ which is:
		\begin{enumerate}
			\item non-expanding, i.e.\ $d(\pi_n(x),\pi_n(y))\le d(x,y)$,
			\item commuting with $f$, i.e.\ $\pi_n \circ f = f \circ \pi_n$,
			\item a nearest point projection, i.e.\ $d(x,\pi_n(x))=\min\{ d(x,y) : y \in F_n \}$\\
			(note that such $\pi_n$ is a retraction of $X$ onto $F_n$).
		\end{enumerate}
		If $(F_n,f|_{F_n})$ has shadowing for each $n\in\N$ then so does $(X,f)$.
	\end{lemma}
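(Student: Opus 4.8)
The plan is to fix $\eps>0$, use compactness of $X$ together with the density $\overline{\bigcup_i F_i}=X$ to locate a \emph{single} level $F_n$ that is $\eps/4$-dense in $X$, project an arbitrary $\delta$-pseudo-orbit of $(X,f)$ into $F_n$ via $\pi_n$ to obtain a $\delta$-pseudo-orbit of $f|_{F_n}$, shadow it inside $F_n$, and then check that the shadowing point (which lies in $F_n\subseteq X$) also $\eps$-shadows the original pseudo-orbit in $X$. For the first step: given $\eps>0$, the balls $\{B(y,\eps/4):y\in\bigcup_i F_i\}$ cover the compact space $X$ (using density of $\bigcup_i F_i$), so there is a finite subcover with centres $y_1,\dots,y_r$; choosing $i_j$ with $y_j\in F_{i_j}$ and setting $n=\max_j i_j$, nestedness of the family gives $\{y_1,\dots,y_r\}\subseteq F_n$, hence every $x\in X$ lies within $\eps/4$ of $F_n$. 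By the nearest-point-projection hypothesis this reads $d(x,\pi_n(x))=\dist(x,F_n)\le \eps/4$ for all $x\in X$.

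Next, let $\delta>0$ be a shadowing modulus for $(F_n,f|_{F_n})$ at precision $\eps/2$, i.e.\ every $\delta$-pseudo-orbit contained in $F_n$ is $\eps/2$-shadowed by a point of $F_n$. I claim the same $\delta$ witnesses $\eps$-shadowing for $(X,f)$. Given a $\delta$-pseudo-orbit $\langle x_i\rangle_{i\ge 0}$ in $X$, set $\bar x_i=\pi_n(x_i)\in F_n$; using that $\pi_n$ commutes with $f$ and is non-expanding,
\[
d\big(f(\bar x_i),\bar x_{i+1}\big)=d\big(\pi_n(f(x_i)),\pi_n(x_{i+1})\big)\le d\big(f(x_i),x_{i+1}\big)<\delta,
\]
so $\langle\bar x_i\rangle$ is a $\delta$-pseudo-orbit lying entirely in the $f$-invariant set $F_n$, hence a $\delta$-pseudo-orbit of $f|_{F_n}$. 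Pick $z\in F_n$ with $d(f^i(z),\bar x_i)<\eps/2$ for all $i$ (the orbit of $z$ is unambiguous since $F_n$ is invariant). Then for every $i$,
\[
d(f^i(z),x_i)\le d(f^i(z),\bar x_i)+d(\bar x_i,x_i)<\tfrac{\eps}{2}+\tfrac{\eps}{4}<\eps,
\]
so $z\in X$ $\eps$-shadows $\langle x_i\rangle$, and $(X,f)$ has shadowing.

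The only genuinely non-formal point is the first step — passing from density of $\bigcup_i F_i$ to $\eps/4$-density of one member $F_n$ — which is exactly where compactness of $X$ and nestedness of the family enter; after that everything is a mechanical application of properties (1)--(3) of $\pi_n$. Concretely: property (2) turns $\pi_n\circ f$ into $f\circ\pi_n$ inside the pseudo-orbit estimate, property (1) keeps the displacement below $\delta$, and property (3) together with the first step bounds $d(\bar x_i,x_i)$. I do not expect any essential obstacle; if one wished to avoid even the compactness argument one could instead take as hypothesis directly that for every $\eps>0$ some $F_n$ is $\eps$-dense, but the stated density and nearest-point-projection assumptions already provide this.
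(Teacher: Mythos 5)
Your proof is correct and follows essentially the same route as the paper's: project the pseudo-orbit into a sufficiently dense $F_n$ using properties (1)--(3) of $\pi_n$, shadow it there, and transfer the shadowing point back via the triangle inequality. The only difference is cosmetic — you spell out the compactness/finite-subcover argument for why some single $F_n$ is $\eps/4$-dense, where the paper simply invokes Hausdorff convergence of $F_n$ to $X$.
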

	\begin{proof}
		Let $\eps>0$. Choose $n\in\N$ large enough so that $F_n$ and $X$ are $\eps/2$-close when measured in Hausdorff distance on $2^X$. As $\pi_n$ is a nearest point projection, this implies that
		\begin{equation}\label{eq:ddager}
		d(x,\pi_n(x))\le \eps/2, \text{ for all } x \in X. %\tag{$\ddagger$}
		\end{equation}
		Now let $\delta=\delta_n(\eps/2)>0$ be provided by the shadowing property in $(F_n,f|_{F_n})$ associated to $\eps/2$. We claim that this $\delta$ suffice.
		
		To this end, let $\set{x_0,x_1,\dots}$ be a $\delta$-pseudo-orbit in $X$. The properties of $\pi_n$ ensure that $\set{\pi_n(x_0), \pi_n(x_1),\dots}$ is still a $\delta$-pseudo-orbit in $F_n$. This pseudo-orbit can be $\eps/2$-shadowed by a point $y\in F_n$. But now using \eqref{eq:ddager} one easily checks that $y$ $\eps$-shadows the original pseudo-orbit $\set{x_0,x_1,\dots}$. This completes the proof.
	\end{proof}
	
	This lemma can be seen as a generalisation of Lemma \ref{lm:shadowing}. Each of the sets $F_n=\bigcup_{k=0}^{n-1} \big\{{1}/{2^k}\big\}\times X_k$ is a disjoint union of $n$ shifts of finite type and therefore has shadowing. It is not hard to check that the projections $\pi_n\colon (a,\xi) \mapsto (\max\{a,1/{2^{n-1}}\},\xi)$ have all the required properties.
	
	\medskip
	
	Let us now look at another way to represent the system from Theorem \ref{thm:cntexample}. Let $a_k=1/3^{k+1}$ and $b_k=1-1/3^{k+1}$ (any two other sequences converging to $0$ and $1$ respectively would work equally well). For each $k\in\N_0$ the system $(\{{1}/{2^k}\}\times X_k,f|_{\{{1}/{2^k}\}\times X_k})$ is naturally isomorphic to a shift of finite type $X_k$ which in turn is isomorphic to a shift of finite type where all the occurrences of symbol $0$ are replaced by symbol $a_k$ and where all the occurrences of symbol $1$ are replaced by $b_k$. Now the disjoint union $F_n=\bigcup_{k=0}^{n-1} \big\{{1}/{2^k}\big\}\times X_k$ can be simply represented as a shift of finite type over symbols $\{a_0,\dots, a_{n-1};b_0,\dots,b_{n-1}\}$ where any word containing $a_k$/$b_k$ and $a_l$/$b_l$ with $k\neq l$ is forbidden and, furthermore, any $b_k$ must be preceded (if possible) and followed by at least $(k+1)$ $a_k$s. The set $X$ is then the closure of the union $\bigcup_{n=1}^\infty F_n$ inside the countable product $\{a_0,\dots;0;b_0,\dots;1\}^\N$ where the base set is inheriting the topology from the interval $[0,1]$. The map providing dynamics is the shift map $\s$.
	
	So far we have only given a different description of the example from Theorem \ref{thm:cntexample}. The advantage being that we have disposed of the first coordinate and are now able to represent our system as a shift space albeit over a countable alphabet. The next step is to make the system transitive. We introduce an extra symbol $2$ and for each $n\in\N$ we let $F_n$ be a shift of finite type over $\{2;a_0,\dots, a_{n-1};b_0,\dots,b_{n-1}\}$ where as before any $b_k$ must be preceded (if possible) and followed by at least $(k+1)$ $a_k$s, and any two symbols $a_k$ and $a_l$ with $k\neq l$ need to be separated by at least $(k+l)$ $2$s. Now taking the closure of $\bigcup_{n=1}^\infty F_n$ inside $\{2;a_0,\dots;0;b_0,\dots;1\}^\N$ gives a transitive example with shadowing where the ICT set coinciding with $X_\infty$ is not an $\w$-limit set for the shift map. We leave the details to the reader.
	
	\medskip
	
	In relation to these results, Good and Meddaugh \cite{GooMed} very recently obtained a characterisation of systems in which $\w_f=ICT(f)$. They show that this is equivalent to another technical property named \emph{orbital limit shadowing}, defined in their paper. Our example thus also shows that a system can exhibit shadowing without having orbital limit shadowing.

	\section{Shadowing on maximal \texorpdfstring{$\w$}{w}-limit sets}\label{sec:maximal}
	
	In Theorem \ref{thm:tent} we saw that for tent maps shadowing implies shadowing on the core. For $s\in(\sqrt{2},2]$ the core of a tent map $f_s$ is its maximal $\w$-limit set. It is an $\w$-limit set as it is clearly an invariant set and the map $f_s$ is transitive on its core (see e.g.\ \cite[Remark 3.4.17]{Brucks}), hence there exists a point whose orbit is dense in the core. The core is a maximal $\w$-limit set as the trajectory of any point whose $\w$-limit set contains the core would eventually have to enter the interior of the core, and therefore its $\w$-limit set would after all have to be contained in the core.
	
	The results we prove below can thus be seen as generalising the implication $\eqref{tm:sh}\implies \eqref{tm:sh_c}$ of Theorem \ref{thm:tent}.

	\begin{theorem}
		Let $f\colon [0,1] \to [0,1]$ be an interval map with shadowing and further assume that there are only finitely many maximal $\w$-limit sets for $f$. Then the restriction of $f$ to any of these maximal $\w$-limit sets also has shadowing.
	\end{theorem}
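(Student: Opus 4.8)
The plan is twofold: first establish that, under the hypotheses, the finitely many maximal $\w$-limit sets $M_1,\dots,M_n$ are \emph{precisely} the chain components of $f$ — hence pairwise disjoint and isolated in the chain recurrent set $\mathrm{CR}(f)$ — and then, fixing one such set $M$, show that a $\delta$-pseudo-orbit of $f$ inside $M$ can always be $\eps$-shadowed \emph{by a point of $M$}. For the structural part I would use the standard facts that every $\w$-limit set is internally chain transitive, that every ICT set is chain transitive and so lies in a single chain component, and that the chain components are closed, invariant, ICT sets partitioning $\mathrm{CR}(f)$. By the Meddaugh--Raines theorem every ICT set of an interval map with shadowing is an $\w$-limit set, so every chain component is an $\w$-limit set. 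Thus if $M$ is a maximal $\w$-limit set it lies in a chain component $C$, which is an $\w$-limit set containing $M$, forcing $M=C$; conversely, if $\omega\supseteq C$ is an $\w$-limit set for a chain component $C$, then $\omega$ lies in a chain component $C'$, forcing $C=C'$ and $\omega=C$, so $C$ is a maximal $\w$-limit set. Hence the $M_i$ are exactly the chain components; being finitely many disjoint compact sets, each is isolated, and with $\eta:=\tfrac12\min_{i\ne j}\dist(M_i,M_j)>0$ (any positive number if $n=1$) every $\w$-limit set lying within $\eta$ of some $M_i$ is contained in $M_i$ — since it sits in some $M_j$, which cannot be that close to $M_i$ unless $j=i$.

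Now fix $M=M_i$ and $\eps>0$ with $\eps<\eta$, and let $\delta>0$ be such that every $\delta$-pseudo-orbit of $f$ in $[0,1]$ is $(\eps/2)$-shadowed. Given a $\delta$-pseudo-orbit $\set{x_k}_{k\ge0}\subseteq M$, I would ``recurrently pad'' it into a $\delta$-pseudo-orbit $\set{X_m}_{m\ge0}\subseteq M$ by concatenating the finite blocks $\set{x_0,x_1,\dots,x_N}$, $N=1,2,\dots$, joined by finite $\delta$-pseudo-orbits inside $M$ running from $x_N$ to $x_0$ (available because $M$ is ICT). Then the block $x_0,\dots,x_N$ occurs in $\set{X_m}$ at some position $K_N$ with $K_N\to\infty$, so $X_{K_N+j}=x_j$ whenever $N\ge j$. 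Let $z\in[0,1]$ $(\eps/2)$-shadow $\set{X_m}$. Since the forward orbit of $z$ stays within $\eps/2$ of $M$, the set $\w_f(z)$ lies within $\eps/2<\eta$ of $M$; being an $\w$-limit set it is then contained in $M$, whence $\dist(f^k(z),M)\to0$.

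To finish, for each $N$ choose $z_N\in M$ nearest to $f^{K_N}(z)$, so $d(z_N,f^{K_N}(z))=\dist(f^{K_N}(z),M)\to0$, and pass to a subsequence along which $z_N\to z^\ast\in M$. For each fixed $j$ and all $N\ge j$,
\[
d\big(f^j(z_N),x_j\big)\;\le\;d\big(f^j(z_N),f^{K_N+j}(z)\big)+d\big(f^{K_N+j}(z),X_{K_N+j}\big),
\]
and letting $N\to\infty$ along the subsequence gives $d(f^j(z^\ast),x_j)\le\eps/2<\eps$: the first summand equals $d\big(f^j(z_N),f^j(f^{K_N}(z))\big)$, which tends to $0$ since both $z_N$ and $f^{K_N}(z)$ converge to $z^\ast$ and $f^j$ is continuous, while the second is always $<\eps/2$ as $z$ $(\eps/2)$-shadows $\set{X_m}$. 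Hence $z^\ast\in M$ and $z^\ast$ $\eps$-shadows $\set{x_k}$, so $f|_{M_i}$ has shadowing.

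The hard part is exactly this last step: a point of $[0,1]$ shadowing a pseudo-orbit that lives in $M$ need not lie in $M$, and its orbit only approaches $M$ asymptotically. The recurrent padding is precisely what lets us exploit this — the far-out tails of $\set{X_m}$ look more and more like $\set{x_k}$ and are shadowed by orbit segments sitting closer and closer to $M$, so a limit of nearest-point projections of those segments yields an honest point of $M$ shadowing $\set{x_k}$. The other place requiring care is the structural reduction, where both hypotheses are used essentially: the Meddaugh--Raines input needs $f$ to be an interval map with shadowing, and finiteness is what makes the $M_i$ isolated — note that without shadowing distinct maximal $\w$-limit sets can even intersect (e.g.\ the two invariant halves of a suitable map of $[0,1]$).
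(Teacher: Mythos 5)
Your proof is correct, and while it shares the paper's key insight --- that the $\w$-limit set of any point shadowing a pseudo-orbit lying in $M_i$ must, by disjointness of the finitely many maximal $\w$-limit sets, be contained in $M_i$ itself --- it executes the tracing step by a genuinely different device. The paper reduces to \emph{finite} pseudo-orbits (invoking the standard fact that finite shadowing implies shadowing on compact spaces), closes the finite pseudo-orbit into a \emph{periodic} one using internal chain transitivity of $M_i$, and then takes $q\in\w(z,f)\subseteq M_i$ as a subsequential limit of $\set{f^{nm}(z)}$; periodicity makes $q$ trace the original block essentially for free. You instead handle infinite pseudo-orbits directly via the ``recurrent padding'' (re-inserting ever-longer initial blocks at positions $K_N\to\infty$), and then extract a tracing point of $M$ as a subsequential limit of nearest-point projections of $f^{K_N}(z)$ onto $M$, using $\dist(f^k(z),M)\to 0$ and continuity of each $f^j$. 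Both limits exist by compactness and both arguments are sound; the paper's periodic trick is slightly slicker, yours avoids the finite-shadowing reduction. The one place where you are more cavalier than you should be is the structural preamble: identifying the maximal $\w$-limit sets with the chain components requires knowing that each chain component is \emph{internally} chain transitive (so that Meddaugh--Raines applies to it). This is true, but it is a compactness argument that deserves at least a reference; note that the disjointness and the fact that every $\w$-limit set lies in some $M_j$ can be obtained without chain components at all, as in the paper --- two intersecting ICT sets have ICT union, which by Meddaugh--Raines would be an $\w$-limit set contradicting maximality, and Zorn's lemma applies because the space of $\w$-limit sets of an interval map is closed in the Hausdorff metric.
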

	\begin{proof}
		Let $A_1,\ldots, A_k$ be the collection of all maximal $\omega$-limit sets for $f$. By a result of Meddaugh and Raines \cite[{Corollary 6}]{MeddaughRaines} we know that for a map with shadowing $\w$-limit sets and internally chain transitive (ICT) sets coincide. It is clear that any two maximal ICT sets must be disjoint, as otherwise their union would again be an ICT set contradicting their maximality. One can also see this directly as follows.
		
		Assume that $A_i\cap A_j\neq\emptyset$ for some $i\neq j$. Then there exists an asymptotic pseudo-orbit $\set{z_i}_{i=1}^\infty$ whose set of accumulation points contains $A_i \cup A_j$. But then, by constructing appropriate periodic pseudo-orbits, it is not hard to see that for every $n$ there is a point $y_n$ such that $B(\omega(y_n,f),1/n)\supset A_i\cup A_j$. Since the space of all $\omega$-limit sets of an interval map is closed by a result of Blokh, Bruckner, Humke, and Sm{\'{\i}}tal \cite{omegaInt}, and since the hyperspace $2^X$ is compact, this would imply that there exists an $\omega$-limit set $A\supset A_i\cup A_j$, which is a contradiction. Thus indeed $A_i\cap A_j=\emptyset$ for $i\neq j$.
		
		Denote $\eta=\min_{i\neq j}\dist(A_i,A_j)/2>0$, % where $\dist(\cdot,\cdot)$ is not the Hausdorff distance on $2^X$ but instead defined for any two compact $A,B\subset X$ as $\dist(A,B)=\min_{a\in A, b\in B} d(a,b)$. In particular, $\dist(\cdot,\cdot)$ is not even a metric. Note however that $\eta>0$.
		fix any $\eps>0$ and let $\delta=\delta(\hat{\eps})$ be provided by the shadowing property of $f$ for $\hat{\eps}=\min\{\eps/2,\eta\}$. %Without loss of generality we may assume that $\eps<\eta$.
		Fix any $A_i$ and any finite $\delta$-pseudo-orbit $\set{x_0,\ldots, x_n}\subset A_i$. Since $A_i$ is an $\omega$-limit set, it is internally chain transitive
		and therefore there are $x_{n+1},\ldots, x_{m-1}\in A_i$ such that the sequence $\set{x_0,\ldots, x_{m-1}, x_0}$ is a (periodic) $\delta$-pseudo-orbit. Set $y_i=x_{(i\bmod m)}$ for all $i\ge0$ and let $z$ be a point that $\eps/2$-traces the $\delta$-pseudo-orbit $\set{y_i}_{i=0}^\infty$. The set $\w(z,f)$ must be contained in some maximal $\w$-limit set, and from our choice of $\eta$ it is not hard to see that this must be $A_i$.
		
		Now let $q\in \w(z,f)\subset A_i$ be any limit point of the sequence $\set{f^{nm}(z)}_{n=0}^\infty$. It is readily checked that $q$ $\eps$-traces the pseudo-orbit $\set{y_i}_{i=0}^\infty$ and in particular the first portion of it $\set{x_0,\ldots,x_n}$. This shows that any finite $\delta$-pseudo-orbit consisting of points in $A_i$ can be $\eps$-traced by a point from $A_i$. This shows that $f|_{A_i}$ has the shadowing property which completes the proof.
	\end{proof}
	
	In \cite[{Theorem 5.4}]{Bl95} Blokh proved that for interval maps the set $\w(f)=\bigcup_{x\in X}\w(x,f)$ has a spectral decomposition into a family of maximal $\w$-limit sets:
	$$\w(f) = X_f \cup \bigg(\bigcup_\alpha S_\w^{(\alpha)}\bigg) \cup \bigg(\bigcup_i B_i\bigg)$$
	where $X_f$ is a collection of periodic orbits that are also maximal $\w$-limit set (sets of genus $0$ in Blokh's notation); each of $S_\w^{(\alpha)}$s is a solenoidal limit set and none of them contains a periodic orbit (genus $1$); finally each $B_i$ is a so-called basic set, an infinite maximal limit set containing a periodic orbit (genus $2$). It turns out \cite[{Theorem PM6}]{Bl95} (also \cite{Bl86}) that for piecewise linear maps with a constant slope $s>1$ there are no sets of genus $1$ in this decomposition, and the number of maximal $\w$-limit sets of genus $0$ and $2$ is finite. Combining this with the previous theorem we obtain the following result.
	
	\begin{corollary}
		Let $f\colon [0,1] \to [0,1]$ be a piecewise linear map with constant slope $s>1$ and with shadowing. Then the restriction of $f$ to any of its maximal $\w$-limit sets has shadowing.
	\end{corollary}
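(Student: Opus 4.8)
The plan is to obtain this corollary as an immediate consequence of the preceding theorem, once we check that its finiteness hypothesis holds automatically for piecewise linear maps with constant slope. Recall that the preceding theorem says: an interval map with shadowing and with only finitely many maximal $\w$-limit sets restricts to a system with shadowing on each of those sets. Since our $f$ is an interval map with shadowing by assumption, it therefore suffices to prove that a piecewise linear map $f\colon[0,1]\to[0,1]$ with constant slope $s>1$ has only finitely many maximal $\w$-limit sets.

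To establish this, I would first invoke Blokh's spectral decomposition \cite[Theorem 5.4]{Bl95}, which expresses $\w(f)=\bigcup_{x\in[0,1]}\w(x,f)$ as a union $X_f\cup\big(\bigcup_\alpha S_\w^{(\alpha)}\big)\cup\big(\bigcup_i B_i\big)$, where the pieces appearing are precisely the maximal $\w$-limit sets, sorted by genus: periodic orbits that are maximal $\w$-limit sets (genus $0$), solenoidal sets (genus $1$), and basic sets (genus $2$). I would then apply \cite[Theorem PM6]{Bl95} (see also \cite{Bl86}): for piecewise linear maps with constant slope $s>1$ there are no solenoidal sets in this decomposition, and the numbers of maximal $\w$-limit sets of genus $0$ and of genus $2$ are both finite. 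Consequently $f$ has only finitely many maximal $\w$-limit sets, and the corollary follows at once from the preceding theorem.

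There is no serious obstacle here: the substance lies entirely in Blokh's structure theory and in the preceding theorem, and the corollary merely splices the two together. The one point worth a moment's attention is the consistency of terminology --- one should confirm that ``maximal $\w$-limit set'' as used in the preceding theorem (a set maximal under inclusion among elements of $\w_f$) is exactly what occurs as a summand in Blokh's decomposition; this is built into the statement of the spectral decomposition theorem, so nothing extra needs to be verified.
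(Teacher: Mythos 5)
Your argument is correct and is exactly the paper's: the authors likewise invoke Blokh's spectral decomposition \cite[Theorem 5.4]{Bl95} together with \cite[Theorem PM6]{Bl95} (and \cite{Bl86}) to conclude that a constant-slope piecewise linear map with $s>1$ has no genus-$1$ sets and only finitely many maximal $\w$-limit sets of genus $0$ and $2$, and then apply the preceding theorem. Nothing further is needed.
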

	\begin{remark}
		The assumption of maximality here is crucial as the ICT set $W=\bigcup_{i=0}^\infty\{\frac{1}{2^i}\} \cup \{0\}$ is an $\w$-limit set for the full tent map $f_2$. Note that $f_2$ has shadowing but its restriction $f_2|_W$ does not.
	\end{remark}

	\begin{theorem}\label{thm:44}
		Let $f\colon [0,1] \to [0,1]$ be a piecewise monotone map with shadowing. Then the restriction of $f$ to any of its $\w$-limit sets with non-empty interior has s-limit shadowing. %(Note that any such $\w$-limit set is maximal.)
	\end{theorem}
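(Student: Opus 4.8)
The plan is to reduce the problem to the transitive case, where Corollary~\ref{cor:pwmtrans} applies. Write $\Omega$ for the given $\w$-limit set, so $\operatorname{int}\Omega\ne\emptyset$. By the structure theory of $\w$-limit sets of interval maps (see e.g.\ \cite{Bl95,Brucks}), $\Omega$ is a \emph{cycle of intervals}: there are $n\ge 1$ and nondegenerate closed intervals $J_0,\dots,J_{n-1}$ with pairwise disjoint interiors such that $\Omega=\bigcup_{i=0}^{n-1}J_i$, $f(J_i)=J_{(i+1)\bmod n}$, $f^n(J_0)=J_0$, and $g:=f^n|_{J_0}$ is transitive. Two easy observations: (i) $g$ is a continuous, transitive, piecewise monotone self-map of the compact interval $J_0$, since $f^n$ is piecewise monotone; and (ii) $\Omega$ is automatically a \emph{maximal} $\w$-limit set -- indeed, if $\Omega\subseteq\w(y,f)$ then the orbit of $y$ enters the open set $\operatorname{int}\Omega$, and as $\Omega$ is forward invariant it is thereafter trapped in $\Omega$, whence $\w(y,f)\subseteq\Omega$.

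First I would prove that $g$ has shadowing. This is a localised version of the argument in the proof of the previous theorem (on restrictions to maximal $\w$-limit sets), with a point $p\in\operatorname{int}J_0$ playing the role that separation between distinct maximal $\w$-limit sets played there, so that the finiteness hypothesis of that theorem is not needed. Fix a small $\eps>0$ with $B(p,\eps)\subseteq J_0$, let $\delta=\delta(\eps)$ come from the shadowing of $f$ on $[0,1]$, and take any finite $\delta$-pseudo-orbit $\set{x_0,\dots,x_N}\subseteq J_0$ of $g$. Being transitive, $g$ is chain transitive on $J_0$, so we may extend $\set{x_0,\dots,x_N}$ to a \emph{periodic} $\delta$-pseudo-orbit of $g$ in $J_0$ that in addition passes through $p$; interpolating each step $y\mapsto f^n(y)$ by $y,f(y),\dots,f^{n-1}(y)$ converts it into a periodic $\delta$-pseudo-orbit of $f$ contained in $\Omega$. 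Let $z\in[0,1]$ $\eps$-shadow it and let $q$ be an accumulation point of $\set{f^{\ell P}(z)}_{\ell=0}^\infty$, where $P$ is the period; then $q$ $\eps$-shadows the periodic $f$-pseudo-orbit from its start, so at the coordinate where $p$ occurs we obtain a power $f^{t}(q)$, with $n\mid t$, lying in $B(p,\eps)\subseteq J_0$. Since $f^n(J_0)=J_0$, the entire $g$-orbit of $f^{t}(q)$ stays in $J_0$, and a suitable further power of $f^{t}(q)$ $\eps$-traces $\set{x_0,\dots,x_N}$ under $g$. Thus $g$ has shadowing.

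By Corollary~\ref{cor:pwmtrans}, $g$ being transitive, piecewise monotone and having shadowing implies that $g$ has \emph{s-limit} shadowing; I would then transfer this back to $f|_\Omega$. The classical-shadowing requirement for $f|_\Omega$ is obtained by running the pinning argument above directly with $f$ and with $p\in\operatorname{int}\Omega$. For the asymptotic requirement, take an asymptotic $\delta$-pseudo-orbit $\set{x_i}_{i=0}^\infty$ in $\Omega$. The crux is that, for small $\delta$, such a pseudo-orbit essentially follows the cyclic pattern $J_0\to J_1\to\cdots\to J_{n-1}\to J_0$: since $f(J_i)=J_{i+1}$ and the finitely many $J_i$ are mutually separated except at finitely many common endpoints, one shows that, after deleting an initial segment and with an error tending to $0$, $\dist(x_i,J_{(i+r_0)\bmod n})\to 0$ for some fixed offset $r_0$. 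Choosing the labelling so the relevant subsequence $\set{x_{jn}}_{j}$ lies near $J_0$, composing with the nearest-point retraction onto $J_0$, and using uniform continuity of $f,\dots,f^{n-1}$, one checks that this adjusted subsequence is an asymptotic $\delta'$-pseudo-orbit of $g$ in $J_0$. Applying the s-limit shadowing of $g$ yields $w\in J_0\subseteq\Omega$ that both $\eps'$-traces and asymptotically traces it under $g$; unwinding the interpolation (again via uniform continuity) shows $w$ both $\eps$-traces and asymptotically $\eps$-traces $\set{x_i}_{i=0}^\infty$ under $f$. Hence $f|_\Omega$ has s-limit shadowing.

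The step I expect to be the main obstacle is the cyclic-following claim -- a careful analysis of $\delta$-pseudo-orbits of $f$ in $\Omega$ near the finitely many common endpoints of the $J_i$, together with keeping the error constants in the passage to the subsequence of $g$ under control. Pinning down the structure theorem for $\Omega$ in exactly the stated form (a cycle of genuine \emph{intervals} whose $n$-th return map is transitive) is a secondary point that needs to be cited with care.
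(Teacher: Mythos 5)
Your proposal is correct in substance and follows the same skeleton as the paper's proof: decompose the limit set $\w(x)=\bigcup_{i}J_i$ into a cycle of intervals, show that the transitive piecewise monotone return map $g=f^{n}|_{J_0}$ has shadowing by pinning a shadowing point inside $J_0$ via an interior reference point, and then invoke Corollary~\ref{cor:pwmtrans} to upgrade shadowing of $g$ to s-limit shadowing. The one place you genuinely diverge is the transfer back from $g$ to $f|_{\w(x)}$: you attempt this by hand, and the ``cyclic-following'' analysis that you flag as the main obstacle is exactly the step the paper short-circuits by the standard observation that shadowing and s-limit shadowing hold for a map if and only if they hold for its $n$-th power. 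Since $f^{n}|_{\w(x)}$ is a disjoint union of finitely many systems of the type already handled (each $f^{n}|_{J_i}$ is transitive, piecewise monotone, and has shadowing by the same pinning argument), it has s-limit shadowing, and hence so does $f|_{\w(x)}$ --- with no need to track how an asymptotic pseudo-orbit threads through the cycle. Your own route is also salvageable, and more easily than you fear: the $J_i$ are distinct connected components of the closed set $\w(x)$, hence genuinely pairwise disjoint (not merely with disjoint interiors) and at positive mutual distance, so for $\delta$ below half that distance any $\delta$-pseudo-orbit contained in $\w(x)$ is forced to follow the cyclic pattern exactly from its first term; there are no shared-endpoint cases, no error terms tending to zero, and no initial segment to delete. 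Finally, your shadowing argument for $g$ (periodic extension plus an accumulation point of $\set{f^{\ell P}(z)}$) works but is more elaborate than needed: as in the paper, it suffices to prepend a finite backward chain starting at the midpoint of $J_0$ and take $\eps<\diam(J_0)/2$, which forces the shadowing point itself into $J_0$ and lets its $f^{n}$-orbit do the rest.
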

	\begin{proof}
		We first observe that any such limit set $\w(x)$ must in fact be a cycle of disjoint closed intervals (see e.g.\ \cite[{IV. Lemma 5}]{BlockCoppel}). Namely $\w(x) = \bigcup_{i=0}^{p-1} J_i$ where $p\in\N$, the sets $J_i$ are $p$ pairwise disjoint segments contained in $[0,1]$, and $f(J_i)=J_{(i+1 \bmod p)}$. Furthermore, the map $f^p|_{J_0}\colon J_0 \to J_0$ is transitive (as it has a dense orbit) and it is also piecewise monotone. The result will now follow from Corollary \ref{cor:pwmtrans} if we could only show that $f^p|_{J_0}$ has shadowing. We remind the reader that both shadowing and s-limit shadowing hold for a power $f^p$ of the map $f$ if and only if they hold for the map itself.
		
		Let $\eps>0$ and without loss of generality assume that $\eps<\diam(J_0)/2$. Let $\delta=\delta(\eps)>0$ be provided by the shadowing property of $f^p$ over the whole interval $[0,1]$. We claim that this $\delta$ will suffice. To see this, take a finite $\delta$-pseudo-orbit $\set{x_0,\dots,x_n}$ contained in $J_0$. As $f^p$ is transitive on $J_0$ we can extend this pseudo-orbit to a longer $\delta$-pseudo-orbit $\set{x_{-m}, x_{-m+1},\dots, x_0,\dots,x_n}$ where $x_{-m}\in J_0$ denotes the midpoint of the segment $J_0$. Let $z\in I$ be a point that $\eps$-traces this pseudo-orbit. Since $d(z,x_{-m})<\eps$ it must be that $z\in J_0$. Therefore the $f^p$-orbit of $z$ is actually contained in $J_0$ and the point $f^{mp}(z)$ $\eps$-traces $\set{x_0,\dots,x_n}$ under the action of $f^p$. This completes the proof.
	\end{proof}
	\begin{remark}
		The converse to Theorem~\ref{thm:44} does not hold, as the double tent map $f_{2,2}\colon [-1,1]\to[-1,1]$ shows. It is defined as a symmetric extension of the standard tent map $f_2$:
		\begin{equation*}
		f_{2,2} (x) =
		\begin{cases}
		f_2 (x) &\text{if } x\in[0,1],\\
		-f_2 (-x) &\text{if } x\in[-1,0).
		\end{cases}
		\end{equation*}
		This map is piecewise monotone with constant slope $s=2$ and does not have shadowing as the set of critical points $\{-1,-1/2,1/2,1\}$ is not linked ($1$ is mapped to a fixed point $0$). The restrictions $f_{2,2}|_{[0,1]}$ and $f_{2,2}|_{[-1,0]}$ to its two maximal $\w$-limit sets both have linking (as $0$ is now also a critical point) and thus by Theorem~\ref{thm:shadequiv} both have s-limit shadowing.
	\end{remark}
	
	\section{Inverse limit space}\label{sec:invlim}
	
	\begin{theorem}
		Let $X$ be a compact metric space and $f\colon X \to X$ a continuous onto map. Let $\s$ be the shift map on the inverse limit space $\inv{X}{f}$. Then:
		\begin{enumerate}
			\item $\s$ has shadowing if and only if $f$ has shadowing,
			\item $\s$ has limit shadowing if and only if $f$ has limit shadowing,
			\item $\s$ has s-limit shadowing if and only if $f$ has s-limit shadowing.
		\end{enumerate}
	\end{theorem}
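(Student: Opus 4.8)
The statement consists of six implications --- for shadowing, for limit shadowing and for s-limit shadowing, in each direction. I would prove the three implications ``$f$ has the property $\Rightarrow$ $\s$ has it'' by one common mechanism, the three converses by another, and then check that the asymptotic refinements cost only routine extra bookkeeping. Write $\Omega=\inv{X}{f}$, so that $\s\colon\Omega\to\Omega$ is the shift $\s(x_0,x_1,x_2,\dots)=(f(x_0),x_0,x_1,\dots)$, let $\pi_k\colon\Omega\to X$ be the $k$-th coordinate projection, and put $d_\infty(\bar u,\bar v)=\sup_k 2^{-k}d(u_k,v_k)$ on $\Omega$. Two structural facts would be used repeatedly: $\pi_k\circ\s=f\circ\pi_k$ for every $k$ (so every $\pi_k$ is a factor map and $\pi_0$ is non-expanding), and for a prescribed $\eps>0$ only the coordinates $k$ below some threshold $K=K(\eps)$ affect $\eps$-closeness in $d_\infty$, the rest being automatically within $\eps$.

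\emph{From $f$ to $\s$ (unrolling).} Given a $\delta$-pseudo-orbit $\set{\bar x^{(n)}}_{n\ge0}$ of $\s$, observe that $\bar x^{(0)}=(x^{(0)}_0,x^{(0)}_1,\dots)$ is itself an exact backward $f$-orbit, while $\set{x^{(n)}_0}_{n\ge0}$ is a $\delta$-pseudo-orbit of $f$. Splicing them, the sequence $P^{(M)}=\set{x^{(0)}_M,x^{(0)}_{M-1},\dots,x^{(0)}_0,x^{(1)}_0,x^{(2)}_0,\dots}$ is a $\delta$-pseudo-orbit of $f$ for every $M$; shadowing of $f$ then produces $w\in X$ that $\tfrac\eps2$-traces it. Set $\hat z:=\s^{M}(\bar w)$ for any $\bar w\in\pi_0^{-1}(w)$, with $M:=K+1$. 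Unwinding the definition of $\s$ gives $\pi_k(\s^n\hat z)=f^{\,M+n-k}(w)$ for all $k\le K$ that matter, which is within $\tfrac\eps2$ of $P^{(M)}_{M+n-k}$, that is, of $x^{(n-k)}_0$ or of $x^{(0)}_{k-n}$. On the other hand the $\s$-pseudo-orbit property forces $x^{(n)}_k$ to lie within $2^{k+1}\delta$ of these points (chain the estimates $d(x^{(m)}_k,x^{(m-1)}_{k-1})<2^k\delta$); hence, for $\delta$ small enough, $\hat z$ $\eps$-traces $\set{\bar x^{(n)}}$. If $\set{\bar x^{(n)}}$ is asymptotic then $P^{(M)}$ is asymptotic and all the errors above tend to $0$, which gives limit shadowing; and since one $\delta$ serves both purposes, the same construction yields s-limit shadowing.

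\emph{From $\s$ to $f$ (lift and project).} Given a pseudo-orbit $\set{y_j}$ of $f$, the plan is to build a pseudo-orbit $\set{\bar y^{(j)}}$ of $\s$ in $\Omega$ with $\pi_0\bar y^{(j)}=y_j$ --- using surjectivity of $f$, which makes $\pi_0$ onto and guarantees that each point of $X$ has a full backward orbit --- and then to shadow $\set{\bar y^{(j)}}$ by some $\bar z\in\Omega$; applying the non-expanding factor map $\pi_0$ then shows $\pi_0\bar z$ shadows $\set{y_j}$ with the same quality. Asymptotic pseudo-orbits and asymptotic tracing pass through $\pi_0$ verbatim, so once the lift is available all three ``$\s\Rightarrow f$'' implications follow simultaneously.

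The construction of the lift is the step I expect to be the main obstacle. Choosing $\bar y^{(j+1)}\in\pi_0^{-1}(y_{j+1})$ close to $\s\bar y^{(j)}$ amounts to finding an exact $f$-preimage near a given approximate one, i.e.\ a form of uniform openness of $\pi_0$ (equivalently of $f$), which a general continuous surjection does not have, so a naive recursive choice of coherent preimages need not stay close. The route I would take is to reduce to finite pseudo-orbits (enough for shadowing), pick the preimages greedily --- each $\bar y^{(j+1)}$ a point of $\pi_0^{-1}(y_{j+1})$ nearest to $\s\bar y^{(j)}$ --- and then show that the jumps $\dist(\s\bar y^{(j)},\pi_0^{-1}(y_{j+1}))$ can be made uniformly small by shrinking the original $\delta$, using compactness of $\Omega$ and surjectivity of $f$; this estimate is the technical core of the direction, and if it proves recalcitrant one falls back on the contrapositive, converting a badly shadowed $f$-pseudo-orbit into a badly shadowed $\s$-pseudo-orbit (with the analogous asymptotic statement handling the limit and s-limit cases). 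Granting this lifting lemma, the two mechanisms above deliver all six implications by routine estimates.
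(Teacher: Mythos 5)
Your ``unrolling'' direction ($f$ has a given shadowing property $\Rightarrow$ $\s$ does) is sound: splicing the exact backward orbit $\bar x^{(0)}$ onto the zeroth-coordinate pseudo-orbit, tracing by $w$, and taking $\s^{M}$ of a lift of $w$ is a correct variant of the standard argument, and the asymptotic refinement does go through as you describe. The gap is in the converse, and it sits exactly where you feared: the estimate you propose as the ``technical core'' --- that $\dist(\s\bar y^{(j)},\pi_0^{-1}(y_{j+1}))$ can be made uniformly small by shrinking $\delta$ --- is false for a general continuous surjection. Take $f\colon[0,1]\to[0,1]$, $f(x)=\min(2x,1)$, which is onto. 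The point $\bar u=(1,1,1,\dots)$ lies in $\inv{[0,1]}{f}$ and has $\pi_0(\bar u)=1$, while for $v=1-t$ with small $t>0$ every point of $\pi_0^{-1}(v)$ has first coordinate $(1-t)/2$, so $\dist\bigl(\bar u,\pi_0^{-1}(v)\bigr)\ge 1/4$ in your metric no matter how small $t$ is. A pseudo-orbit of $f$ that sits at $1$ for a while and then steps to $1-t$ therefore admits no coordinate-faithful lift with small jumps, greedy choice or not; and the contrapositive fallback does not rescue you, since converting a badly shadowed $f$-pseudo-orbit into a badly shadowed $\s$-pseudo-orbit requires the very same lift.

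The fix, which is the actual content of the paper's proof of the s-limit case (and of Chen and Li's proof for plain shadowing), is to give up on $\pi_0\bar y^{(j)}=y_j$. One delays and ``exactifies'': for the $i$-th lifted point one reaches back to a point $x_{i-k-N}$ of the pseudo-orbit and uses its genuine forward iterates $f^{k+N-n}(x_{i-k-N})$ as the coordinates $n\le N+k$, so that these coordinates form an exact finite backward orbit by construction; the deeper coordinates are arbitrary preimages (here surjectivity of $f$ is used), harmless because of the weights $2^{-n}$. Consecutive lifted points then differ, in coordinate $n$, by quantities of the form $d\bigl(f^{j+1}(x_m),f^{j}(x_{m+1})\bigr)$, which are controlled by uniform continuity of the iterates of $f$ --- no openness of $f^{-1}$ is needed anywhere. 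For the limit and s-limit statements the depth $k$ of exactness must grow with $i$ (with a fixed depth the tail $\sum_{n>N}2^{-n}$ prevents the lifted sequence from being an asymptotic pseudo-orbit of $\s$), and the paper organises this growth with blocks $[u_k+k,u_{k+1}+k]$ chosen according to how fast $d(f(x_n),x_{n+1})\to 0$; finally one must compare $f^{k}(x_{i-k})$ back to $x_i$ by summing the pushed-forward errors. Without some version of this construction your ``$\s\Rightarrow f$'' direction does not close.
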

	\begin{proof}
		The first equivalence was proven by Chen and Li \cite[{Theorem 1.3}]{ChLi92}, the second by Gu and Sheng \cite[{Theorem 3.2}]{GuSh06}, and we shall now see that the third one also holds. While the idea is clear, one needs to be careful with indexing everything properly.
		
		First we will show that if $\s$ has s-limit shadowing then $f$ has s-limit shadowing. Without loss of generality we may assume that $\diam (X)=1$. Fix any $\eps>0$  and let $\delta>0$ be assigned to $\eps/2$ by s-limit shadowing of $\s$. Let $N$ be such that $\sum_{i=N}^\infty 2^{-i}<\delta/2$ and let $\gamma \in (0,\frac{\eps}{4N})$ be such that if $x,y\in X$ satisfy $d(x,y)<\gamma$ then $d(f^i(x),f^i(y))<\min\{\frac{\delta}{4}, \frac{\eps}{4N} \}$ for $i=0,\ldots,N$.
		We claim that this $\gamma$ suffices.
		
		Fix any asymptotic $\gamma$-pseudo orbit $\set{x_n}_{n=0}^\infty$ for $f$. There exists a strictly increasing sequence of positive integers $\set{u_k}_{k=1}^\infty$ such that for all $n\geq u_k-N$ we have $d(f^{i+1}(x_n),f^i(x_{n+1}))<\gamma/(k+1)^2$ for $i=0,\ldots,k+N-1$. For technical reasons we put $u_0=0$. Define a sequence $\set{z_n}_{n=0}^\infty\subset X$ by taking any $y\in f^{-N}(x_0)$, putting $x_{-i}=f^{N-i}(y)$ for $0\leq i\leq N$ and next:
		$$
		z_n=\begin{cases}
		x_{n-N} &\text{if } n\leq  u_1,\\
		f^k(x_{n-k-N}) &\text{if } u_k+k\leq n \leq  u_{k+1}+k.
		\end{cases}
		$$
		
		We define a sequence of points $\set{y^{(i)}}_{i=0}^\infty \subset \inv{X}{f}$ by the following rule. For $n\leq N$ and $i\geq 0$ we put $y^{(i)}_n=f^{N-n}(z_i)$. For $n>N$ we distinguish two cases when defining $y^{(i)}$. If $i\in [u_{k}+k,u_{k+1}+k]$ for some $k\geq 0$ and if $n\leq N+k$ then we put $y^{(i)}_n=f^{k-n+N}(x_{i-k-N})$ and in the other case we take any $y^{(i)}_n\in f^{-1}(y^{(i)}_{n-1})$ which is possible, since $f$ is onto.
		Note that if $n\leq N+k$ then we have already defined $y_N^{(i)}=z_i=f^k(x_{i-k-N})$ and we also put $y_{N+1}^{(i)}=f^{k-1}(x_{i-k-N})$ and therefore $f(y^{(i)}_{n+1})=y^{(i)}_n$ for every $n\geq 0$, so with this definition indeed $y^{(i)}\in \inv{X}{f}$.
		We claim that the sequence $y^{(i)}$ constructed by the above procedure is an asymptotic $\delta$-pseudo-orbit for $\s$.
		
		Firstly, we claim that $\set{z_n}_{n=0}^\infty\subset X$ is an asymptotic $\gamma$-pseudo-orbit. Fix any $n\in\N$ and assume that both $n$ and $n+1$ belong to $[u_k+k,u_{k+1}+k]$. Then $d(f(z_n),z_{n+1})=d(f^{k+1}(x_{n-k-N}),f^{k}(x_{n-k-N+1}))<\gamma/(k+1)^2$. In the second case $n=u_{k+1}+k$ we have $d(f(z_n),z_{n+1})=d(f^{k+1}(x_{n-k-N}),f^{k+1}(x_{n-k-N}))=0$. Indeed, the claim holds.
		
		By the choice of $\gamma$ for $n\leq N$ we have
		$$
		d(f(y^{(i)}_n),y^{(i+1)}_{n})=d(f^{N-n}(f(z_i)),f^{N-n}(z_{i+1}))<\delta/4,
		$$
		and so
		\begin{align*}
		d(\s(y^{(i)}),y^{(i+1)})&= \sum_{n=0}^\infty 2^{-n} d(f(y^{(i)}_n),y^{(i+1)}_{n})\leq \sum_{n=0}^N 2^{-n-2}\delta + \sum_{n=N+1}^\infty 2^{-n}\\
		&<\delta/2+\delta/2
		\end{align*}
		showing that $y^{(i)}$ is indeed a $\delta$-pseudo orbit for $\s$. To show that it is an asymptotic pseudo-orbit it suffices to show that $\setb{y^{(i)}_n}_{i=0}^\infty$ is an asymptotic pseudo-orbit for any sufficiently large $n$. To this end, fix any $n>N$. Then if $k$ is such that $k\ge n-N$, then for all $i\in [u_{k}+k,u_{k+1}+k)$ we have $i-k-N\geq u_k-N$, and so
		$$
		d(f(y^{(i)}_n),y^{(i+1)}_{n})=d(f^{k-n+N}(f(x_{i-k-N})),f^{k-n+N}(x_{i-k-N+1}))<\frac{\gamma}{(k+1)^2},
		$$
		while if $i=u_{k+1}+k$ then
		$$
		d(f(y^{(i)}_n),y^{(i+1)}_{n})=d(f^{k-n+N}(f(x_{i-k-N})),f^{k-n+N+1}(x_{i-k-N}))=0.
		$$
		This immediately implies that $\set{y^{(i)}}_{i=0}^\infty$ is an asymptotic pseudo-orbit for $\sigma$.
		
		Now, let $p\in \inv{X}{f}$ be a point asymptotically $\eps/2$-tracing a pseudo-orbit $\set{y^{(i)}}_{i=0}^\infty \subset \inv{X}{f}$. Then by the definition of the metric in $\inv{X}{f}$ the coordinate $p_0\in X$ of $p$ asymptotically $\eps/2$-traces the sequence $\setb{y^{(i)}_0}_{i=0}^\infty \subset X$. Then,by the definition of $\gamma$,
		\begin{align*}
		d(f^{i}(p_0),z_{i+N})&\leq d(f^{i}(p_0),f^N(z_i))+\sum_{j=0}^{N-1}d(f^{N-j}(z_{i+j}),f^{N-j-1}(z_{i+j+1}))\\
		&\leq d(f^{i}(p_0),y^{(i)}_0)+N \frac{\eps}{4N} < 3\eps/4.
		\end{align*}
		The same calculation also yields $\lim_{i\to\infty}d(f^{i}(p_0),z_{i+N})=0$. Furthermore, if $i\in [u_k+k-N,u_{k+1}+k-N]$ then $z_{i+N}=f^k(x_{i-k})$ and so
		\begin{align*}
		d(x_i,z_{i+N})	& = d(x_i,f^N(z_i)) = d(x_i,f^{k}(x_{i-k})) \\
		& \leq \sum_{j=0}^{k-1} d(f^j(x_{i-j}),f^{j+1}(x_{i-j-1}))\\
		& \leq k\frac{\gamma}{(k+1)^2}\leq \frac{\gamma}{k+1}<\eps/4.
		\end{align*}
		The same calculation, in particular the bound $\frac{\gamma}{k+1}$, yields $\lim_{i\to\infty}d(x_i,z_{i+N})=0$. Putting together these two calculations shows that $p_0$ asymptotically $\eps$-traces the pseudo-orbit $\set{x_n}_{n=0}^\infty\subset X$ and so the proof of the first implication is completed.
		
		Next we prove that if $f$ has s-limit shadowing then $\s$ has s-limit shadowing.
		Fix any $\eps>0$ and let $N$ be such that $\sum_{i=N}^\infty 2^{-i}<\eps/2$. There is $\delta>0$ such that if $d(x,y)<\delta$ then $d(f^i(x),f^i(y))<\frac{\eps}{2N}$ for $i=0,\ldots,N$.
		There is $\gamma>0$ such that if $\set{x_n}_{n=0}^\infty$ is an asymptotic $\gamma$-pseudo-orbit then it is $\delta$-traced.
		Let $\set{y^{(i)}}_{i=0}^\infty \subset \inv{X}{f}$ be an asymptotic $2^{-N}\gamma$-pseudo-orbit. Then the sequence $\setb{y^{(i)}_N}_{i=0}^\infty$ is an asymptotic $\gamma$-pseudo-orbit, so let $z\in X$ be a point which asymptotically $\delta$-traces it. Define $q\in \inv{X}{f}$ by putting $q_i=f^{N-i}(z)$ for $0\leq i\leq N$, and for $i>N$ let $q_i$ be any point in $f^{-1}(q_{i-1})$. We claim that $q$ asymptotically $\eps$-traces $\set{y^{(i)}}_{i=0}^\infty$. Note that $\eps$-tracing is almost obvious, because for $n\leq N$ we have
		$$
		d(y^{(i)}_n,f^i(q_n))=d(f^{N-n}(y^{(i)}_N), f^{N-n}(f^i(q_N)))< \frac{\eps}{2N}
		$$
		and therefore $d(y^{(i)},\s^i(q))\leq \frac{\eps}{2}+\sum_{n=0}^N d(y^{(i)}_n,f^i(q_n))<\eps$.
		By the same argument $\lim_{i\to \infty}d(y^{(i)}_n,f^i(q_n))=0$ for every $n\leq N$.
		But if we fix any $n>N$ then for $i\ge n-N$ we have
		\begin{align*}
		d\big(y^{(i)}_n,f^i(q_n)\big) &= d\big(y^{(i)}_n,f^{i-(n-N)}(q_N)\big)\\
		&\leq \sum_{j=1}^{n-N} d\big(y^{(i-(j-1))}_{n-(j-1)},y^{(i-j)}_{n-j}\big) + d\big(y^{(i-(n-N))}_N,f^{i-(n-N)}(q_N)\big)\\
		&= \sum_{j=1}^{n-N} d\big(y^{(i-j+1)}_{n-j+1},f(y^{(i-j)}_{n-j+1})\big) + d\big(y^{(i-(n-N))}_N,f^{i-(n-N)}(q_N)\big)
		\end{align*}
		and so also in this case $\lim_{i\to \infty}d(y^{(i)}_n,f^i(q_n))=0$, where we use the fact that $z=q_N$ asymptotically traces $\setb{y_N^{(i)}}_{i=0}^\infty$ and that $\setb{y_{k}^{(i)}}_{i=0}^\infty$ is an asymptotic pseudo-orbit for each $k\ge 0$. Thus, $q$ asymptotically traces $\set{y^{(i)}}_{i=0}^\infty$, which completes the proof.
	\end{proof}
	
	\section*{Acknowledgements}
	
	%The authors are grateful to anonymous referee of this paper for his careful reading and numerous suggestions.
	
	This research was supported in part by a London Mathematical Society Research in Pairs - Scheme 4 grant which supported research visits of the authors.
	
	Chris Good and Mate Puljiz acknowledge support from the European Union through funding under FP7-ICT-2011-8 project HIERATIC (316705).
	Research of Piotr Oprocha was supported by National Science Centre, Poland (NCN), grant no. 2015/17/B/ST1/01259.
	Financial support of the above institutions is gratefully acknowledged.

\end{document}